\newtheorem{theorem}{Theorem}[section]
\numberwithin{equation}{section}
\newtheorem{lemma}[theorem]{Lemma}
\newtheorem{proposition}[theorem]{Proposition}
\newtheorem{corollary}[theorem]{Corollary}
\newtheorem{remark}[theorem]{Remark}
\numberwithin{equation}{section}
\def\Z{\mathbb{Z}}
\def\R{\mathbb{R}}
\def\E{\mathbb{E}}
\def\bP{\mathbb{P}}
\def\FF{\mathcal{F}}
\def\EE{\mathcal{E}}
\def\LL{\mathcal{L}}
\def\GG{\mathcal{G}}
\def\RR{\mathcal{R}}
\newcommand{\1}[1]{{\mathbf 1}{\{#1\}}}
\renewcommand{\phi}{\varphi}
\renewcommand{\epsilon}{\varepsilon}
\newcommand{\coz}[1]{\bar{#1}}
\newcommand{\cog}[1]{#1\text{\tiny $\updownarrow$}}
\begin{document}

\title{Once reinforced random walk on $\Z\times \Gamma$}

\author{Daniel Kious\thanks{ NYU-ECNU Institute of Mathematical Sciences at NYU Shanghai}\and Bruno Schapira\thanks{Aix-Marseille Universit\'e, CNRS, Centrale Marseille, I2M, UMR 7373, 13453 Marseille, France;  bruno.schapira@univ-amu.fr} \and Arvind Singh\thanks{University Paris Sud} 
}
\date{}
\maketitle
\begin{abstract}
We revisit Vervoort's unpublished paper \cite{Ver} on the once reinforced random walk, and prove that this process is recurrent on any graph of the form $\Z\times \Gamma$, with $\Gamma$ a finite graph, for sufficiently large reinforcement parameter.
We also obtain a shape theorem for the set of visited sites, and show that the fluctuations around this shape are of polynomial order. The proof involves sharp general estimates on the 
time spent on subgraphs of the ambiant graph which might be of independent interest.   
\newline
\newline
\emph{Keywords and phrases.} Recurrence, Reinforced random walk, self-interacting random walk, shape theorem.\\
MSC 2010 \emph{subject classifications:} 60K35.
\end{abstract}

\section{Introduction}\label{sec:intro0}
\subsection{General overview}
The once-reinforced random walk (ORRW) belongs to the large class of self-interacting random walks, whose future evolution depends on its past history. The study of these processes is usually difficult and basic properties such as recurrence and transience are hard to obtain. One of the most famous example of self-interacting random walks is the linearly edge-reinforced random walk introduced by Coppersmith and Diaconis \cite{CD} in the eighties, for which recurrence and transience were only recently proved in a series of papers, see \cite{ACK,ST,SZ,DST}.

Even though the definition of ORRW, introduced in 1990 by Davis \cite{Dav90}, is simple, it turns out that its study does not seem easier than that of the linearly reinforced RW, and results on graphs with loops are very rare.
In this model, the  current weight of an edge is $1$ if it has never been crossed and $1+\delta$ otherwise, with $\delta>0$.  It has been conjectured by Sidoravicius that ORRW is recurrent on 
$\mathbb{Z}^d$ for  $d\in\{1,2\}$ and undergoes a phase transition for $d\ge3$, being recurrent when the parameter $\delta$ is large and transient when it is small. These questions on 
$\mathbb{Z}^d$, $d\ge2$, are completely open. Until recently, it was not even clear whether this weak reinforcement procedure could indeed change the nature of the walk, so that ORRW could 
be recurrent on a graph which is transient for simple random walk, as soon as the parameter $\delta$ is large enough. The first example of such phase transition was provided in \cite{KS16} 
on a particular class of trees with polynomial growth, which is in contrast with the result of Durrett, Kesten and Limic \cite{DKL02} who showed that the ORRW is transient on regular trees for any 
$\delta>0$ (later generalized to any supercritical tree by Collevecchio \cite{Coll}). More recently, the complete picture on trees has been given in \cite{CKS}: the critical parameter of ORRW on a 
locally finite tree is equal to its {\it branching-ruin number}, which is defined in \cite{CKS} as a polynomial regime of the branching number (see \cite{L90}).

As already mentioned, results on graphs with loops are very rare. Sellke \cite{Sellke} first investigated the case of the ladder $\mathbb{Z}\times\{1,...,L\}$, with $L\ge 3$, and 
proved that the ORRW is almost surely recurrent on this graph for all $\delta\in(0,1/(L-2))$. The proof is a simple consequence of a general (and nice) martingale argument, 
but it does not really face the difficulty of possible drift pushing the walk 
toward infinity, which can in principle happen in the presence of loops in the graph. In an unpublished paper, Vervoort \cite{Ver} announced a more difficult result, namely that the ORRW is recurrent on the ladder for all large enough reinforcement parameter $\delta$. 
Unfortunately, his proof was never published and the preprint \cite{Ver} is unpolished, with gaps and mistakes. 
The general strategy of \cite{Ver} was to show that the mean drift of the walk, each time it exits its present range, is almost balanced. 
The reason being that for large enough $\delta$, all possible exit edges are equally likely to be chosen. Thus, at least at first order, there should be equal probability to get a drift   
$\delta$ to the right (when the exit edge is an horizontal edge oriented to the left) as an opposite drift (when this edge is oriented to the right). 
However, an important ingredient, which was missing in \cite{Ver}, is to show first that the ORRW cannot travel a large distance before exiting its present range. 
Indeed the first order 
approximation of uniformity for the choice of the exit edge is only valid when the edges taken into consideration are not too far one from each other. 
One difficulty then is to obtain an estimate, which is uniform over all the possible ranges (or finite subgraphs of $\mathbb Z\times \Gamma$). 
We prove such general result here, which might be of independent interest, with the help of electrical network techniques. 
Details can be found in our Proposition \ref{outbound} below.

Furthermore, we improve the lower bound on $\delta$, and obtain a polynomial bound in the height of the ladder, instead of an exponential one, which was implicit in \cite{Ver}. 
For this purpose, one needs 
to adapt the notion of walls from \cite{Ver}, to ensure their existence with a probability $1/2$, instead of an exponentially small (in the size of $\Gamma$) one. 
We also analyze the fluctuations of the range of the walk and provide a shape theorem. Finally we show that the successive return times to the origin have finite expectation.

\subsection{Model and results}\label{sec:intro}
Let us define a nearest-neighbor random walk $(X_n)_{n\ge 0}$ as a  ORRW  on a (nonempty, locally finite and undirected) graph $G$, with reinforcement parameter $\delta\ge 0$.
First, the {\it current weight} of an edge is defined as follows: at time $n$, an edge has conductance $1$ if it has never been crossed (regardless of any orientation of the edges) and conductance $1+\delta$ otherwise. For any $n\ge0$, let $E_n$ be the set of non-oriented edges crossed up to time $n$, that is
\begin{eqnarray}
\label{defEn}
E_n:=\left\{\{x,y\}: \ x,y\in G\text{ and there exists }  1\le k\le n, \text{ such that~}\{X_{k-1},X_k\}=\{x,y\}\right\}.
\end{eqnarray}
At time $n\in\mathbb{N}$, if $X_n=x\in G$, then the walk jumps to a neighbor $y$ of $x$ with conditional probability
\begin{equation}\label{def.ORRW}
\mathbb{P} \left[X_{n+1}=y\mid \mathcal{F}_n\right]=\frac{\delta\1{\{x,y\}\in E_n}+1}{\sum_{z:z\sim x}\left(\delta\1{\{x,z\}\in E_n}+1\right)},
\end{equation}
where $\left(\mathcal{F}_n\right)_{n\ge 0}$ is the natural filtration generated by the walk, i.e.~$\mathcal{F}_n=\sigma(X_0,\dots,X_n)$.

Our first result is the following: 
\begin{theorem}\label{theo1}
There exists a constant $C>0$, such that for any finite connected graph $\Gamma$, 
the once-reinforced random walk on $\Z\times\Gamma$ is recurrent for any reinforcement parameter $\delta\ge C| \Gamma|^{40}$. 
\end{theorem}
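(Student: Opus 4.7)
The strategy, adapted from Vervoort \cite{Ver}, is to analyse the walk through its successive exits from its current range $E_n$, showing that (i) the walk cannot travel far between two such exits, (ii) the exit edges are nearly uniform over the horizontal boundary of the range, which forces a nearly symmetric drift, and (iii) the resulting near-symmetry lets a positive proportion of range extensions create \emph{walls}, i.e.\ slabs of $\Z\times \Gamma$ which are extremely hard to re-cross. Walls occurring on both sides of the origin infinitely often will yield recurrence.

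First I would introduce the stopping times $\tau_k$ at which a new, previously uncrossed, edge is traversed, so that between $\tau_k$ and $\tau_{k+1}$ the walk evolves on the induced subgraph with edge set $E_{\tau_k}$ endowed with the uniform conductance $1+\delta$. The law of the first exit edge from $E_{\tau_k}$ is then governed by a reversible electrical network on a subgraph of $\Z\times\Gamma$. I would invoke Proposition \ref{outbound} to conclude that, uniformly over all possible ranges that may arise along the trajectory, this exit distribution is close to uniform on the boundary edges, with an error that scales polynomially in $|\Gamma|/\delta$. It is precisely this quantitative uniformity that will eventually allow for the polynomial dependence $C|\Gamma|^{40}$ instead of the exponential one implicit in \cite{Ver}.

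Since the horizontal boundary of the range splits into a left column and a right column, each of size at most $|\Gamma|$, near-uniformity on the boundary implies that at each $\tau_k$ the probability of extending the range to the right is within $O(|\Gamma|/\delta)$ of $1/2$. I would then design walls: a wall at a vertical slab of $\Z\times\Gamma$ is a configuration of crossed edges inside the slab produced by excursions whose geometry ensures that any later attempt to cross the slab from the other side has probability $O(1/\delta)$. Combining the near-symmetry of exit-edge distributions with excursion estimates on the finite graph $\{0,\dots,L\}\times\Gamma$, I would prove that at each range extension a wall is created with probability at least $1/2$, essentially independently of the past behaviour on the opposite side. A second Borel--Cantelli argument then ensures that almost surely infinitely many walls form both to the left and to the right of the origin. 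Being essentially impassable, these walls confine the walk to a finite portion of the graph between two of them, where recurrence is immediate, and they also deliver the integrability of the return times to $0$.

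The main obstacle is Proposition \ref{outbound}, that is, the uniform control on the exit-edge distribution: naive bounds based on conductance ratios are exponential in the depth of the subgraph, so one needs finer electrical-network estimates, in particular sharp control of effective resistances and of hitting distributions on arbitrary connected induced subgraphs of $\Z\times\Gamma$. Redefining walls so that their existence has probability at least $1/2$---rather than exponentially small in $|\Gamma|$ as in \cite{Ver}---is also delicate, and is what ultimately drives the polynomial bound in the reinforcement parameter.
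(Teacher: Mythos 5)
Your plan correctly identifies several of the technical ingredients (Proposition~\ref{outbound} for controlling how far the walk can travel before exiting its range, the near-balance of exit edges from Proposition~\ref{prop:balance}, and a probability-$1/2$ wall construction), but the mechanism you propose for deriving recurrence from them contains a genuine gap. You describe walls as ``essentially impassable'' barriers --- slabs that a later excursion crosses only with probability $O(1/\delta)$ --- and you argue that walls on both sides of the origin will confine the walk. In the ORRW, however, crossed edges acquire \emph{increased} conductance $1+\delta$, so a fully explored slab is the opposite of a barrier: once the walk has covered it, it passes through as freely as anywhere else in its range, with no suppression whatsoever. The paper's $D$-walls (Section~\ref{section:Dwall}) are simply level sets that get completely covered before the walk advances $D$ further; their role is purely metric. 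They guarantee that a negatively oriented exit edge $e$ and its matched positively oriented partner $\varphi_n(e)$ lie at intrinsic distance $d_{A_n}$ of polynomial order, so that the balance estimate $|\bP_a[e_\sigma = f_1]-\bP_a[e_\sigma = f_2]|\le d_A(f_1^-,f_2^-)/(1+\delta)$ of Proposition~\ref{prop:balance} remains effective when summed over the relevant family $\GG_n$ of exit edges.

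The piece your proposal omits entirely is the martingale $M_n=\coz X_n+\delta\sum_{k<n}(\coz{X}_{k+1}-\coz X_k)\,\1{\{X_k,X_{k+1}\}\notin E_k}$ of Lemma~\ref{lem:martingale}, which is precisely what turns the near-symmetry of the exit distribution into an actual recurrence statement. The paper's route is: apply optional stopping to $M$ between $H_x$ and $\tau_x:=H_{2x}\wedge H_{x,0}$, yielding identity~\eqref{eq:stoptheo} that ties the escape probability to the expected accumulated drift $\E[D_{\tau_x}\mid\FF_{H_x}]$; bound that drift from below via the matching of exit edges, using $D$-walls together with Propositions~\ref{prop:balance}, \ref{outbound} and~\ref{prop:wall}; conclude the gambler's ruin bound $\bP[H_{2x}<H_{x,0}\mid\FF_{H_x}]\le 2^{-10}$ of Proposition~\ref{prop:main}; and finally iterate at dyadic scales $2^k$, showing that the maximal level reached before returning to $0$ is stochastically bounded, which together with the fact that the ORRW cannot stay forever in a finite subgraph gives recurrence. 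Without the martingale (or some substitute quantitative drift identity), near-uniformity of the exit distribution does not by itself imply the walk returns to the origin; and even if walls were barriers, infinitely many of them with a per-wall crossing probability $O(1/\delta)$ would not confine the walk, since each such wall would eventually be breached.
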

Note that here by {\it recurrent} we mean that almost surely every site is visited infinitely often.

Our second result is a shape theorem. 
Denote by $\RR_n$ the graph whose vertex set is $\{X_0,\dots,X_n\}$, the set of visited sites up to time $n$, and whose edges are those crossed by the walk up to this time. Let $t(n)$ be the first time when the number of vertices in this graph equals $(2|\Gamma|+1)n$.

\begin{theorem}\label{theo2}
There exists a constant $C>0$, such that for any finite connected graph $\Gamma$ and any $\delta\ge C |\Gamma|^{40}$, the following holds:  
almost surely for all $n$ large enough, there exists $x_n \in \Z$, such that
$$\{x_n-n+n^{1/\delta^{1/8}},\dots,x_n+n-n^{1/\delta^{1/8}}\}\times \Gamma \ \subseteq \ \RR_{t(n)}\ \subseteq \ \{x_n-n-n^{1/\delta^{1/8}},\dots,x_n+n+n^{1/\delta^{1/8}}\}\times \Gamma,$$
where inclusions here are meant as inclusion of graphs.
\end{theorem}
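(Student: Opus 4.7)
The strategy is to track the growth of the column-range. Let $L_n$ and $R_n$ denote the leftmost and rightmost first coordinates visited up to time $n$; since $\Z\times\Gamma$ is a product graph, the set of visited columns is always the contiguous interval $\{L_n,\dots,R_n\}$. Define the stopping times $\tau_k := \inf\{n\ge 0 : R_n - L_n = k\}$ and the directions $\xi_k \in \{-1,+1\}$ of the $k$-th extension. The column-range at time $\tau_k$ is exactly $[c_k - k/2,\,c_k + k/2]$ with center $c_k = \tfrac12 \sum_{j=1}^k \xi_j$, so controlling the random signs $(\xi_k)$ provides direct control of both the width and the center of the visited rectangle.

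The crucial step is to show that these signs are near-fair conditional on the past. Applying Proposition~\ref{outbound}, once the walk is continued from $X_{\tau_k}$ inside its current range $[L_{\tau_k},R_{\tau_k}]\times\Gamma$, its probability of exiting via the right boundary before the left is essentially balanced, because for large $\delta$ the reinforced conductances dominate and symmetrize the associated electrical network. The upshot is an inequality of the form
$$\bigl|\,\bP(\xi_{k+1}=+1 \mid \FF_{\tau_k}) - \tfrac12\,\bigr| \le \eta(\delta,|\Gamma|),$$
with $\eta(\delta,|\Gamma|)$ polynomially small in $\delta^{-1}$ and polynomially large in $|\Gamma|$. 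The key point is that the bound must be uniform over all admissible configurations of $\RR_{\tau_k}$, which is exactly what Proposition~\ref{outbound} delivers together with the refined \emph{walls} construction (designed so that walls exist with probability $1/2$, not exponentially small in $|\Gamma|$).

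With this in hand, $S_k := \sum_{j\le k}\xi_j$ decomposes as a martingale plus a drift of total size at most $k\,\eta(\delta,|\Gamma|)$. An Azuma--Hoeffding bound for the martingale part, combined with the drift estimate and a dyadic union bound across scales, yields $|S_k|\le k^{1/\delta^{1/8}}$ with overwhelming probability whenever $\delta \ge C|\Gamma|^{40}$; the exponent $1/\delta^{1/8}$ is obtained by balancing the explicit polynomial dependencies in $\eta(\delta,|\Gamma|)$ against the diffusive scale. Setting $x_n := c_{2n}$, the column range at time $\tau_{2n}$ is exactly $[x_n - n,\,x_n + n]$, giving the inclusions at the column level up to an additive error of $n^{1/\delta^{1/8}}$.

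The final step is to link the natural stopping time $\tau_{2n}$ to the target stopping time $t(n)$ and to promote the column-level inclusions to vertex-level ones, i.e.\ to verify that every column well inside the range is fully visited. A localized form of the recurrence Theorem~\ref{theo1}, combined with the time-spent-in-subgraph estimate of Proposition~\ref{outbound}, ensures that each interior column is revisited enough to exhaust all $|\Gamma|$ of its vertices before the next range extension, up to a boundary layer of width $n^{1/\delta^{1/8}}$ that is absorbed into the error; matching $|\{X_0,\dots,X_{t(n)}\}|=(2|\Gamma|+1)n$ with this filling estimate fixes the value of $x_n$. The main obstacle remains the drift bound of the second paragraph: obtaining a polynomial-in-$\delta$ rate with full uniformity over the past configuration of the range is what drives the whole argument, and everything else is a delicate but fundamentally standard consequence of that estimate.
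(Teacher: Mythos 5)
Your proposal is built around a claim that is actually false and, more importantly, that the theorem does not need. You assert that the extension directions $\xi_k$ are near-fair, i.e.\ $|\bP(\xi_{k+1}=+1\mid\FF_{\tau_k})-\tfrac12|\le\eta(\delta,|\Gamma|)$ uniformly. This fails badly once the range is wide: when $R_{\tau_k}-L_{\tau_k}=k\gg\delta$, the walk sitting at (say) the right end cannot reach the far end before exiting --- this is precisely what Proposition~\ref{outbound} says --- so the next extension occurs on the side where the walk currently is, with probability $1-O((1+\delta)/k)\to1$. (For $\Gamma$ a single point one can compute $u_R=\frac{k+1+\delta}{k+2+2\delta}$ explicitly, which tends to $1$.) Proposition~\ref{prop:balance} only equalizes \emph{nearby} exit edges, at intrinsic distance $O(\delta)$; it says nothing about the two ends of a width-$2n$ range. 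Your claim would force $x_n=O(\sqrt n)$ by Azuma, which contradicts the paper's own Remark~1.3 stating that $x_n/n$ oscillates between roughly $-1/2$ and $1/2$. And even granting the near-fair hypothesis, Azuma on a bounded-increment martingale gives fluctuations $O(\sqrt{k\log k})$, not the much smaller $k^{1/\delta^{1/8}}$ you write; that exponent cannot come from a CLT-type bound on the center.

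The misdirection matters because the theorem makes no claim about $x_n$; it is defined a posteriori to match the range. What does need proving is the \emph{filling} statement: that every column more than $n^{1/\delta^{1/8}}$ from the edge of the range is completely covered by time $t(n)$. You relegate this to one sentence (``a delicate but fundamentally standard consequence''), but it is the entire content of the proof, and neither a ``localized Theorem~\ref{theo1}'' nor Proposition~\ref{outbound} delivers it directly. The paper's argument instead reduces the theorem to $H^*_n<H_{n+n^{\delta^{-1/8}}}$, where $H^*_n$ is the first time the levels $\{n-1,n\}$ are fully covered, and proves this via the quantitative gambler's-ruin estimate (Proposition~\ref{prop:main2}): from a freshly reached level $n+x$, the walk returns to level $n$ before reaching $n+(1+\delta^{-\eta})x$ with conditional probability at least $1/2$. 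Iterating this across $O(\log n)$ scales and applying Azuma to the resulting sum of indicator variables (not to the center displacement) yields at least $\kappa\log n$ returns to level $n$ before level $n+n^{10/\delta^\eta}$, and independent positive-probability covering events on each return then fill the column pair via a bootstrapping argument. The exponent $1/\delta^{1/8}$ is inherited from $\eta$ through this geometric cascade, not from any diffusive fluctuation. Your proposal does not invoke Proposition~\ref{prop:main2} or any substitute, so the key quantitative input is missing.
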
 

\begin{remark}\emph{
We do not expect that the center of the cluster $x_n$ could be taken to be zero. Indeed for the ORRW on $\Z$ (i.e. when $\Gamma$ is reduced to a single vertex), explicit computations show that, for any $\epsilon>0$, $x_n/n<-1/2+\epsilon$ infinitely often and  $x_n/n>1/2-\epsilon$ infinitely often.
} 
\end{remark}
\begin{remark}\emph{Concerning the exponent $\delta^{-1/8}$, it is far from being optimal. Our proof would allow to replace the constant $1/8$ by any other constant smaller than  $1/4$, at the cost of imposing larger 
$\delta$. But we do not believe that this would be optimal neither. In fact we expect that the correct order of the fluctuations is precisely in $n^{\frac{1+o(1)}{\rho}}$, with $\rho$ the asymptotical mean drift per level: $\rho:=\lim_{x\to +\infty} \E(D_x)$, 
where $D_x$ equals $\delta$ times the number of edges between level $x$ and $x+1$ which are crossed for the first time from left to right minus the number of those edges crossed for the first time in the other direction. But here the main issue would be to show that the above limit actually exists. On the other hand, our proofs in this paper show that if the limit indeed exists, then it is larger than $\delta$, up to lower order terms (and in particular it is positive for large $\delta$). We also suspect that $\rho$ should be equivalent to $c\delta$, for some constant $c\ge 1$, when $\delta$ goes to infinity. 
} 
\end{remark}

Note that contrarily to Theorem \ref{theo1}, Theorem \ref{theo2} does not hold for the simple random walk (which corresponds to the case when $\delta=0$). 
In this direction, we also show at the end of the paper that the successive return times to the origin have finite expectation. 

\subsection{Organization of the paper}
The paper is organized as follows. In Section \ref{sec:network}, we prove general estimates on random walks on networks where conductances take only two values: $\delta$ on a finite subgraph $A$ of $\Z \times \Gamma$ and one elsewhere. Our main results there are estimates, which are uniform on $A$, on the time spent on certain level sets, that is subsets of the form $\{i\}\times \Gamma$. This section can be read independently of the rest of the paper, and might be interesting on its own. 
Then in Section \ref{section:Dwall}, we define a notion of wall, that extends the one from Vervoort's paper \cite{Ver}. The interest of this new definition is to obtain bounds of polynomial order (in the size of $|\Gamma|$) in all our results. In section \ref{sec:gambler} we gather the results proved so far to obtain gambler's ruin type estimates. These estimates are then used in Section \ref{sec:prooftheo} to prove Theorems \ref{theo1}, and \ref{theo2}, and in Section \ref{sec:returntimes} to prove that the successive return times to the origin have finite expectation.  

\section{Random walks on sub-graphs of $G = \Z \times \Gamma$. }\label{sec:network}

This section gathers some results concerning (reversible) random walks on sub-graphs of $\Z \times \Gamma$ where $\Gamma$ is a finite graph. In particular, we study the position where such a walk exits a given sub-graph. As such, the section does not deal specifically with once-reinforced random walk but the results obtained here will play a crucial role during the proof of the main theorems. We also believe that some estimates such as Proposition \ref{propDiffusif} may be found of independent interest. 

\subsection{Notation}
A graph $G=(V,E)$ is a collection of vertices $V$ and edges $E$. By a small abuse of notation, we shall sometimes identify a graph and its set of vertices when the associated edge set is unambiguous. An undirected edge between two vertices $x$ and $y$ is denoted by $\{x,y\}$, while a directed edge is denoted by $(x,y)$. We write $x\sim y$ when $\{x,y\}\in E$ and we say in this case that $x$ and $y$ are neighbors. All the graphs considered here are assumed to be non-empty and locally finite, meaning that all vertices have only finitely many neighbors. If $e=(x,y)$ is a directed edge, we call $x$ the tail and also denote it by $e^-$, and $y$ the head and denote it by $e^+$. We write $\vec E$ for the set of directed edges of $G$. 

Given two vertices $x$ and $y$, we denote by $d(x,y)$ their graph distance in $G$. For a subgraph $A\subseteq G$, we denote by $d_A(x,y)$ the induced (also called intrinsic) distance, i.e. the minimal number of edges needed to be crossed to go from $x$ to $y$ while staying inside $A$. In particular, we have $d_G$  = $d$. 

Given a subgraph $A = (V_A, E_A) \subseteq G$, we define 
\begin{align}
\label{defpartialsE}&\partial_e A :=\{e\in \vec E:\ e^-\in V_A,\, \{e^-,e^+\}\notin E_A\}\\
\label{defpartialsV}&\partial_vA :=\{v\in V_A:\ \exists e\in \partial_e A\text{ with~}e^-=v\}.
\end{align}
In words, $\partial_e A$ is
the set of directed edges of $G$ which do not belong to $A$ as an undirected edge but whose tail belongs to $A$. Notice that the head of a directed edge $e\in\partial_e A$ may, or may not, be in $A$.
The set $\partial_vA$ is the set of  tails of the edges in $\partial_eA$, or equivalently the set of vertices adjacent to an edge outside $A$.

In this paper, we  consider cylinder graphs of the form $G = \Z \times \Gamma$. In this case, if $a\in G$, we will denote by $\coz{a}$ and $\cog{a}$ the respective projections on $\Z$ and $\Gamma$ so that 
$$a := (\coz{a},\cog{a}).$$ 
Finally, for $z\in \Z$, we denote by $\{z\}\times \Gamma$ the sub-graph of $G$ isomorphic to $\Gamma$ with edge set consisting of all edges with both endpoints in $\{z\}\times \Gamma$. We call this sub-graph the {\bf level set $z$}.

\subsection{Reversible RW and electrical networks}

We recall here some standard results on random walks and electrical networks which we will use repeatedly in the paper. We refer the reader to \cite{DoyleSnell}  and \cite{LP} for a comprehensive and thorough presentation of the theory.
  
A \textbf{network} is a graph $G=(V,E)$, endowed with a map $c:E\to (0,\infty)$. The value $c(e)$ of an edge $e$ is called its \textbf{weight} or \textbf{conductance}, and its reciprocal $r(e):=1/c(e)$ is called its \textbf{resistance}. A random walk  on a network $(V,E,c)$ is the Markov chain which moves only to neighbors of its current position, choosing it with a probability proportional to the weight of the edge traversed. We denote the law of the chain starting from $a\in V$ by $\bP_a$. This process is reversible with respect to the measure $\pi$ defined by $\pi(x) := \sum_{y\sim x} c(\{x,y\})$. Given a subset of vertices $V_0\subseteq V$, a map $h:V\to \R$ is said to be harmonic on $V_0$ if it satisfies:
$$
h(x)=\frac{1}{\pi(x)}\sum_{y\sim x} c(\{x,y\})h(y)\quad\hbox{for any $x\in V_0$.}
$$
Given a vertex $a\in V$ and a subset $Z\subseteq V\setminus\{a\}$, 
a {\bf voltage} $\mathbf{v}$ is a function which is harmonic outside $\{a\}\cup Z$, and vanishes on $Z$. 
Given a voltage function, we define the associated {\bf current} function $\mathbf{i}$ on the oriented edges by 
\begin{flalign*}
&&&& \mathbf{i}(x,y) := c( \{x,y\})[\mathbf{v}(x)-\mathbf{v}(y)]. &&  \hbox{(\textbf{Ohm's law})}
\end{flalign*}
Then, $\mathbf{i}$ is a \textbf{flow} from $\{a\}$ to $Z$, which means an anti-symmetric function on the set of oriented edges $\vec E$ such that
\begin{flalign*}
&&&&&&&&& \sum_{y\sim x} \mathbf{i}(x,y) = 0\quad\hbox{for all $x\in V \setminus \{a\}\cup Z$.} &&  \hbox{(\textbf{Kirchoff's node's law})}
\end{flalign*}
The \textbf{strength} of the flow is defined by $\|\mathbf{i}\| := \sum_{y\sim a} \mathbf{i}(a,y) = - \sum_{z\in Z}\sum_{y\sim z} \mathbf{i}(z,y)$. We say that we have a {\bf unit current} flowing from $a$ to $Z$ when $\|\mathbf{i}\| = 1$, and one defines similarly a {\bf unit flow}. 

Given a random walk $(S_n)_{n\ge 0}$, the hitting time of a set of vertices $B\subseteq V$ is defined by
\begin{equation*}
H_B := \min\{n\ge 0\, :\, S_n\in B\},
\end{equation*}
whereas the first return time is defined by
\begin{equation*}
H_B^+ :=\min\{n\ge 1\,:\, S_n\in B\}.
\end{equation*}
To simplify notations, we just write $H_b$ (resp $H_b^+$) when $B = \{b\}$. Similarly, when $G$ is of the form $\Z\times \Gamma$, we also use the notation $H_r := H_{ \{r\} \times \Gamma}$ for the hitting time of the level set $r\in\Z$. 

The {\bf effective conductance} $\mathcal C(a \leftrightarrow Z)$ between a vertex $a$ and a subset $Z\subseteq V$ is defined by the formula
\begin{equation}\label{conductance}
\mathcal C(a \leftrightarrow Z) := \pi(a) \bP_a[H_Z<H_a^+].
\end{equation}
Its reciprocal is called the {\bf effective resistance} between $a$ and $Z$ and is denoted by $\mathcal R(a  \leftrightarrow Z)$.  
It follows from the maximum principle that there exists a unique unit current flowing from $a$ to $Z$. The corresponding voltage is the unique function that is harmonic outside of $\{a\}\cup Z$, vanishes on $Z$, and satisfies $\mathbf{v}(a)=\mathcal R(a\leftrightarrow Z)$. 

We recall three important results which we will need in later sections.
\begin{proposition}[{\bf Current as edge crossings}, Prop. 2.2 in \cite{LP}]\label{unitcurrent}
Let $G$ be a finite connected network. Consider the random walk starting at some vertex $a$ and let $Z$ be a subset of vertices not containing $a$. For $x\sim y$, let $N_{xy}$ be the number of crossings of the directed edge $(x,y)$ by the walk before it hits $Z$. We have $\E_a[N_{xy}-N_{yx}] = \mathbf{i}(x,y)$, where $\mathbf{i}$ is the unit current flowing from $a$ to $Z$. 
\end{proposition}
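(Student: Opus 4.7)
The plan is to realize both sides as functions of the Green's function of the walk killed at $Z$, and then identify them through harmonicity and the correct normalisation.

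First I would introduce the \emph{killed Green's function}
\[
G_Z(a,x):=\E_a\Bigl[\sum_{n=0}^{H_Z-1}\1{X_n=x}\Bigr],
\]
the expected number of visits to $x$ before hitting $Z$. Conditioning on each such visit and using the Markov property immediately gives $\E_a[N_{xy}]=G_Z(a,x)\,p(x,y)$ with $p(x,y)=c(\{x,y\})/\pi(x)$. Therefore
\[
\E_a[N_{xy}-N_{yx}]=c(\{x,y\})\Bigl(\tfrac{G_Z(a,x)}{\pi(x)}-\tfrac{G_Z(a,y)}{\pi(y)}\Bigr),
\]
which is exactly Ohm's law applied to the candidate voltage $\mathbf{v}(x):=G_Z(a,x)/\pi(x)$.

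Next I would verify that $\mathbf{v}$ is the unique voltage of a unit current from $a$ to $Z$. Vanishing on $Z$ is immediate from the definition of $G_Z$. Harmonicity on $V\setminus(\{a\}\cup Z)$ follows from a one-step decomposition: for such $x$, every visit to $x$ is preceded by a step from a neighbour $y\notin Z$, yielding $G_Z(a,x)=\sum_{y\sim x}G_Z(a,y)\,p(y,x)$; dividing by $\pi(x)$ and using $p(y,x)=c(\{x,y\})/\pi(y)$ recovers the harmonicity equation $\pi(x)\mathbf{v}(x)=\sum_{y\sim x}c(\{x,y\})\mathbf{v}(y)$.

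It remains to check the strength. The associated flow $\tilde{\mathbf{i}}(x,y):=\E_a[N_{xy}-N_{yx}]$ has strength
\[
\|\tilde{\mathbf{i}}\|=\sum_{y\sim a}\E_a[N_{ay}-N_{ya}]=\E_a[\#\text{departures from }a\,-\,\#\text{arrivals at }a],
\]
where both counts are taken up to time $H_Z$. Since the walk starts at $a$ and each subsequent visit is both preceded by an arrival and (as $a\notin Z$) followed by a departure, departures exceed arrivals by exactly one; hence $\|\tilde{\mathbf{i}}\|=1$. Thus $\tilde{\mathbf{i}}$ is a unit flow satisfying Ohm's law with respect to a voltage that vanishes on $Z$ and is harmonic off $\{a\}\cup Z$, and by the uniqueness of such a voltage (maximum principle) it coincides with $\mathbf{i}$.

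There is essentially no obstacle in this argument; the only subtlety worth pointing out is to make sure that in the harmonicity computation every $y$ appearing in the sum automatically has $G_Z(a,y)$ well defined, which is guaranteed because $G_Z(a,y)=0$ for $y\in Z$ so those neighbours contribute zero and can be freely included or excluded.
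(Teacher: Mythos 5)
The paper does not give its own proof of this statement: it simply cites it as Proposition~2.2 of Lyons and Peres~\cite{LP}, whose argument is exactly the one you reproduce (realize $\mathbf{v}(x)=G_Z(a,x)/\pi(x)$ as the voltage via the killed Green's function, check harmonicity off $\{a\}\cup Z$ and vanishing on $Z$, verify strength one by the departures-minus-arrivals count, and conclude by uniqueness of the unit current). Your proof is correct and matches the standard reference proof.
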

As a consequence of this proposition (\emph{c.f.} Exercice 2.37 of \cite{LP}), if $\mathbf{i}$ is a unit current from $a$ to $Z$, then necessarily
\begin{equation}\label{courrant}
|\mathbf{i}(x,y)| \ \le \ 1\qquad \text{for all $x \sim y$.}
\end{equation} 
Given a flow $\mathbf{j}$ on an electrical network, the energy dissipated by the flow is defined by
$$
\mathcal{E}(\mathbf{j}) := \frac 12\sum_{e\in\vec{E}} r(e)\mathbf{j}^2(e)
$$
The following result characterizes the current among all flows on a network. 
\begin{proposition}[{\bf Thomson's principle}, p.~35 in \cite{LP}]\label{thomson}
The unit current $\mathbf{i}$ has minimal energy among all unit flows:
$$
\mathcal{E}(\mathbf{j}) > \mathcal{E}(\mathbf{i}) = \mathcal R(a  \leftrightarrow Z)\quad\hbox{for any unit flow $\mathbf{j} \neq \mathbf{i}$.}
$$
\end{proposition}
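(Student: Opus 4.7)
The plan is to exploit the strictly convex quadratic structure of $\mathcal{E}$ together with Ohm's law, which identifies $\mathbf{i}$ as the critical point of $\mathcal{E}$ on the affine space of unit flows from $a$ to $Z$. Given any competing unit flow $\mathbf{j}$, I would set $\mathbf{k} := \mathbf{j} - \mathbf{i}$. Since $\mathbf{i}$ and $\mathbf{j}$ are both anti-symmetric, both have unit strength at $a$, and both satisfy Kirchhoff's node law on $V \setminus (\{a\} \cup Z)$, the difference $\mathbf{k}$ is anti-symmetric and has vanishing divergence everywhere outside $Z$, i.e.\ $\sum_{y \sim x}\mathbf{k}(x,y) = 0$ for every $x \in V \setminus Z$.

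Expanding via $\mathbf{j}^2 = \mathbf{i}^2 + 2\mathbf{i}\mathbf{k} + \mathbf{k}^2$ yields
$$
\mathcal{E}(\mathbf{j}) = \mathcal{E}(\mathbf{i}) + \mathcal{E}(\mathbf{k}) + \sum_{e \in \vec{E}} r(e)\,\mathbf{i}(e)\,\mathbf{k}(e).
$$
The central step is to show that the cross term vanishes. I would apply Ohm's law to rewrite $r(e)\mathbf{i}(e) = \mathbf{v}(e^-) - \mathbf{v}(e^+)$, where $\mathbf{v}$ is the voltage associated to $\mathbf{i}$, turning the cross term into $\sum_{e \in \vec{E}} (\mathbf{v}(e^-) - \mathbf{v}(e^+))\mathbf{k}(e)$. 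A discrete integration by parts, using the anti-symmetry of $\mathbf{k}$ to fold the two halves of $\vec{E}$ together, rewrites this as $2\sum_{x \in V}\mathbf{v}(x)\sum_{y\sim x}\mathbf{k}(x,y)$, which vanishes since the inner sum is zero at every $x \notin Z$ and $\mathbf{v}$ is zero on $Z$. Since $r(e) > 0$ on every edge, $\mathcal{E}(\mathbf{k}) \ge 0$ with equality only when $\mathbf{k} \equiv 0$, so the strict inequality $\mathcal{E}(\mathbf{j}) > \mathcal{E}(\mathbf{i})$ for $\mathbf{j} \neq \mathbf{i}$ follows. For the identity $\mathcal{E}(\mathbf{i}) = \mathcal{R}(a \leftrightarrow Z)$, I would reuse the same summation by parts with $\mathbf{i}$ in place of $\mathbf{k}$: the computation collapses to $\mathbf{v}(a)\cdot\|\mathbf{i}\| = \mathbf{v}(a) = \mathcal{R}(a\leftrightarrow Z)$, by the characterization of the voltage recalled just before the statement.

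Nothing in the argument is deep: the main obstacle is careful bookkeeping of the factor $1/2$ in the definition of $\mathcal{E}$ (which compensates for each undirected edge appearing twice in $\vec{E}$) and of the anti-symmetry conventions when rearranging the cross term. Once these are handled, Thomson's principle reduces to one application each of Ohm's law and Kirchhoff's node law, combined with strict positivity of the resistances.
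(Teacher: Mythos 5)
Your argument is correct and is essentially the standard textbook proof of Thomson's principle (cf.\ \cite{LP}, p.~35, or Doyle--Snell \cite{DoyleSnell}); note that the paper itself cites this result without proof, so there is no in-paper argument to compare against. The decomposition $\mathbf{j}=\mathbf{i}+\mathbf{k}$ with $\mathbf{k}$ divergence-free outside $Z$, the vanishing of the cross term via Ohm's law plus discrete summation by parts, and the identity $\mathcal{E}(\mathbf{i})=\mathbf{v}(a)=\mathcal{R}(a\leftrightarrow Z)$ are all handled correctly, including the $1/2$ bookkeeping and the anti-symmetry folding.
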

We say that a flow $\mathbf{j}$ has a \textbf{cycle} if there exist oriented edges $e_1,\ldots e_n$ with $e_i^+ = e^-_{i+1}$ and $e_n^+ = e_1^-$ and $\mathbf{j}(e_i) >0$ for all $i \in \{1,\ldots,n\}$. It follows from Thomson's principle that a \textbf{current $\mathbf{i}$ cannot have a cycle} because we could otherwise decrease its energy by removing from it a small flow with support on the cycle. Another immediate consequence of Thomson's principle is that the effective conductance/resistance is a monotone function of the conductances on the edges. 
\begin{proposition}[{\bf Rayleigh's Monotonicity Principle}, p.~36 in \cite{LP}]
\label{Rayleigh}
Let $G$ be a finite connected graph with two conductances assignments, $c$ and $c'$ such that $c\le c'$. 
Let $a$ be a vertex and $Z$ a subset of vertices not containing $a$. We have $\mathcal C_c(a\leftrightarrow Z)\le  \mathcal C_{c'}(a\leftrightarrow Z)$. 
\end{proposition}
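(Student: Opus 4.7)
The plan is to derive this directly from Thomson's principle (Proposition \ref{thomson}), which has just been stated. Rewriting the conclusion in terms of resistances, it suffices to show $\mathcal R_{c'}(a\leftrightarrow Z)\le \mathcal R_c(a\leftrightarrow Z)$, since effective conductance is the reciprocal of effective resistance.

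The key observation is that the energy functional is monotone in the edge resistances. Explicitly, let $r(e)=1/c(e)$ and $r'(e)=1/c'(e)$, so that $c\le c'$ translates into $r\ge r'\ge 0$ pointwise on $\vec E$. Denote by $\mathbf{i}_c$ (resp.\ $\mathbf{i}_{c'}$) the unit current from $a$ to $Z$ in the network with conductances $c$ (resp.\ $c'$). Both of these are \emph{unit flows} from $a$ to $Z$ (the space of unit flows depends only on the underlying graph, not on the conductances), so each is an admissible competitor in Thomson's principle for either network.

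First I would apply Thomson's principle in the network with conductances $c'$ to the competing unit flow $\mathbf{i}_c$, obtaining
\begin{equation*}
\mathcal R_{c'}(a\leftrightarrow Z) \;=\; \mathcal{E}_{c'}(\mathbf{i}_{c'}) \;\le\; \mathcal{E}_{c'}(\mathbf{i}_c) \;=\; \tfrac{1}{2}\sum_{e\in\vec E} r'(e)\,\mathbf{i}_c(e)^2.
\end{equation*}
Then I would use the pointwise inequality $r'(e)\le r(e)$ to bound this term-by-term:
\begin{equation*}
\tfrac{1}{2}\sum_{e\in\vec E} r'(e)\,\mathbf{i}_c(e)^2 \;\le\; \tfrac{1}{2}\sum_{e\in\vec E} r(e)\,\mathbf{i}_c(e)^2 \;=\; \mathcal{E}_c(\mathbf{i}_c) \;=\; \mathcal R_c(a\leftrightarrow Z),
\end{equation*}
where the last equality is again Thomson's principle, now in the $c$-network. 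Chaining the two displays gives $\mathcal R_{c'}\le \mathcal R_c$, and hence $\mathcal C_c\le \mathcal C_{c'}$.

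There is no real obstacle here: the only subtle point is noticing that the set of unit flows from $a$ to $Z$ is purely combinatorial (it depends only on Kirchhoff's node law and the strength normalization), so one can legitimately evaluate the $c$-energy of $\mathbf{i}_{c'}$ and vice versa. One should also briefly mention the edge case $c(e)=0$, which is avoided here because conductances are assumed strictly positive in the definition of a network given earlier, so $r$ and $r'$ are finite and the energies are well-defined.
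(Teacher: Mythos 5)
Your argument is correct and is precisely the standard deduction of Rayleigh's monotonicity from Thomson's principle; the paper does not spell out a proof but states just before the proposition that it is an "immediate consequence of Thomson's principle," which is exactly the route you take. Nothing to add.
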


We end this section with the remarkable {\bf Commute-Time Identity}, which relates the hitting times between two points of a graph and the effective resistance between these two points.

\begin{proposition}[{\bf Commute-Time Identity}, Corollary 2.21 in \cite{LP}]
\label{commute}
Let $G$ be a finite connected network and let $a$ and $z$ be two vertices of $G$. The commute time between $a$ and $z$ is
\[
\mathbb{E}_a\left[H_z\right]+\mathbb{E}_z\left[H_a\right]= \mathcal{R}(a\leftrightarrow z)\times \left(\sum_{x\in V}\pi(x)\right).
\] 
\end{proposition}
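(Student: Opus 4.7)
The plan is to relate the commute time to a geometrically distributed sum of excursion lengths from $a$, then use the standard mean-return-time formula for a finite reversible chain together with the definition \eqref{conductance} of the effective conductance.

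First, introduce the ``cycle time'' $T := \min\{n \geq H_z : X_n = a\}$, i.e.\ the first return to $a$ after having visited $z$. Applying the strong Markov property at the stopping time $H_z$ gives
\[ \mathbb{E}_a[T] = \mathbb{E}_a[H_z] + \mathbb{E}_z[H_a], \]
so it suffices to show $\mathbb{E}_a[T] = \mathcal{R}(a\leftrightarrow z) \sum_x \pi(x)$.

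Second, decompose the walk started at $a$ into its successive i.i.d.\ excursions from $a$ (pieces of length $H_a^+$ under $\mathbb{P}_a$). Setting $p := \mathbb{P}_a[H_z < H_a^+]$, the index of the first excursion that visits $z$ is geometric with parameter $p$. Writing $\alpha := \mathbb{E}_a[H_a^+ \mid H_a^+ < H_z]$ and $\beta := \mathbb{E}_a[H_a^+ \mid H_z < H_a^+]$ and summing,
\[ \mathbb{E}_a[T] = \sum_{n\ge 1}(1-p)^{n-1}p\bigl[(n-1)\alpha + \beta\bigr] = \frac{(1-p)\alpha + p\beta}{p} = \frac{\mathbb{E}_a[H_a^+]}{p}, \]
the last equality being the law of total expectation. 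The one mildly subtle point is that the excursion length $L_i$ and the indicator that excursion $i$ visits $z$ are not independent, so Wald's identity cannot be invoked directly; the explicit summation above is essentially the simplest way around this.

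Third and finally, two classical facts close the proof. For any finite irreducible reversible chain, the stationary distribution is $\pi(\cdot)/\sum_x \pi(x)$, whence $\mathbb{E}_a[H_a^+] = \sum_x \pi(x)/\pi(a)$; and by the very definition \eqref{conductance}, $p = \mathcal{C}(a \leftrightarrow z)/\pi(a)$. Substituting into the previous display,
\[ \mathbb{E}_a[T] = \frac{\sum_x \pi(x)/\pi(a)}{\mathcal{C}(a\leftrightarrow z)/\pi(a)} = \frac{\sum_x \pi(x)}{\mathcal{C}(a\leftrightarrow z)} = \mathcal{R}(a\leftrightarrow z)\cdot\sum_x \pi(x), \]
which together with the strong Markov reduction of the first step yields the desired identity. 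There is no real obstacle beyond the care needed in the excursion computation; the proof is essentially a one-line combination of the cycle decomposition with the Kac-type return formula $\mathbb{E}_a[H_a^+]=\sum_x \pi(x)/\pi(a)$.
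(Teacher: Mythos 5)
Your proof is correct. The paper quotes this result from Lyons and Peres \cite{LP} (Corollary 2.21) without supplying a proof, so there is no in-paper argument to compare against; your derivation — reducing the commute time to the cycle time $T$ by the strong Markov property at $H_z$, decomposing $T$ into the i.i.d.\ excursions from $a$ until the first one visiting $z$, invoking Kac's return formula $\E_a[H_a^+]=\sum_x\pi(x)/\pi(a)$, and substituting the defining relation \eqref{conductance} for the conductance — is the standard self-contained argument. One small remark: your caution about Wald's identity is unnecessary. The index $N$ of the first excursion to visit $z$ is a stopping time for the filtration generated by the i.i.d.\ excursions, and each $L_i$ is measurable with respect to the $i$-th excursion, so the stopping-time form of Wald's identity gives $\E_a[T]=\E[N]\,\E_a[H_a^+]=\E_a[H_a^+]/p$ directly; what would fail is only the version of Wald requiring $N$ to be independent of the $(L_i)$. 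Your explicit geometric-series computation circumvents this and is, of course, also correct.
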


\subsection{The exit edge for a random walk on a sub-graph of $G = \Z \times \Gamma$}

By definition, the once-reinforced random walk behaves as a usual random walk as long as it stays inside its trace. More precisely, assume that at some time $n$, the ORRW has crossed exactly all the edges of a sub-graph $A\subseteq G$ (in particular $S_n\in A$). Then, from time $n$ and until it exits the sub-graph $A$, the ORRW behaves as the random walk on the electrical network $G$ with conductances given by 
\begin{equation}\label{ORRWnetwork}
c(e) := 1+\delta\mathbf{1}_{\{e\in A\}}.
\end{equation}
In particular, when $\delta$ is large, the probability to choose a non-reinforced edge is small. Thus, informally, one can visualize the walk as ``bumping'' on the boundary of its trace many times before exiting, and so it should ``mix'' a little more than the usual random walk. This remark leads to a key idea which originates from Vervoort \cite{Ver}: when $\delta$ is large, the distribution of the exit edge gets close (locally, in some sense) to the uniform measure on the boundary $\partial_e A$. 

In this subsection, we give two results in this direction that concern the distribution of the exit edge. They are stated in term of the random walk on the electrical network \eqref{ORRWnetwork} but they translate readily to the ORRW as explained above. The first result states that two edges on the boundary which are not too far away  have approximately the same probability to be chosen as the next exit edge.
\begin{proposition}\label{prop:balance}
Let $A$ be a finite connected sub-graph of a graph $G$. Fix $\delta >0$ and consider the electrical network on $G$ with conductances $c(e):=1+\delta\mathbf{1}_{\{e\in A\}}$. Let $(S_n)_{n\ge 0}$ denotes a random walk on this electrical network and define $\sigma$ as the first time the walk exits the sub-graph $A$:
\begin{eqnarray}\label{def:sigma}
\sigma:=\inf\{n\ge 1\, :\, (S_{n-1},S_n)\in \partial_eA\}.
\end{eqnarray}
For any $f_1,f_2\in \partial_eA$, and for any $a\in A$, we have 
\begin{equation}\label{bound:balance}
\left| \bP_a[ (S_{\sigma-1},S_\sigma)= f_1] - \bP_a[(S_{\sigma-1},S_\sigma) = f_2 ]\right| \ \le \  \frac{d_A(f^-_1,f^-_2)}{1+\delta}.
\end{equation}
\end{proposition}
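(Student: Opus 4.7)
The plan is to represent the exit-edge probability as a voltage in a suitable augmented electrical network, and then compare the two voltages along an $A$-path using Ohm's law.

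First, I would augment the network by attaching, for each $f \in \partial_e A$, a phantom ``sink'' vertex $z_f$ to $f^-$ via an edge of conductance $1$. At every $x \in V_A$ the total conductance $\pi(x)$ and each outgoing conductance are unchanged, so the walk in the augmented network started at $a$ and killed when it enters $Z := \{z_f : f \in \partial_e A\}$ induces exactly the same distribution over ``last edges traversed'' as the original walk does over $(S_{\sigma-1},S_\sigma)$. In particular,
\[
\bP_a\bigl[(S_{\sigma-1},S_\sigma) = f\bigr] \;=\; \bP_a[\text{the first vertex of } Z \text{ hit is } z_f].
\]

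Next, let $\mathbf{i}$ be the unit current from $a$ to $Z$ in this augmented network, and $\mathbf{v}$ the associated voltage, normalised so that $\mathbf{v} \equiv 0$ on $Z$. Since $z_f$ is absorbing, the walk crosses $(f^-, z_f)$ at most once before hitting $Z$ (and never crosses $(z_f, f^-)$), so Proposition \ref{unitcurrent} gives $\mathbf{i}(f^-,z_f) = \bP_a[\text{first sink is } z_f]$. Applying Ohm's law to the conductance-$1$ edge $\{f^-, z_f\}$ then yields
\[
\bP_a\bigl[(S_{\sigma-1},S_\sigma) = f\bigr] \;=\; \mathbf{i}(f^-,z_f) \;=\; \mathbf{v}(f^-).
\]

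To conclude, I would fix a shortest $A$-path $f_1^- = x_0, x_1, \ldots, x_L = f_2^-$ with $L = d_A(f_1^-, f_2^-)$ and telescope. Each edge $\{x_i, x_{i+1}\}$ lies in $E_A$ and therefore carries conductance $1+\delta$, so Ohm's law gives $\mathbf{v}(x_i) - \mathbf{v}(x_{i+1}) = \mathbf{i}(x_i, x_{i+1})/(1+\delta)$. Summing and invoking the uniform bound $|\mathbf{i}(x,y)| \le 1$ from \eqref{courrant},
\[
\bigl|\mathbf{v}(f_1^-) - \mathbf{v}(f_2^-)\bigr| \;\le\; \sum_{i=0}^{L-1} \frac{|\mathbf{i}(x_i,x_{i+1})|}{1+\delta} \;\le\; \frac{L}{1+\delta},
\]
which is exactly \eqref{bound:balance}. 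I do not anticipate any real difficulty; the only point deserving care is verifying that the augmentation preserves the law of the killed walk, which is immediate once one checks that $\pi(x)$ and every outgoing conductance at $x \in V_A$ coincide in the two networks.
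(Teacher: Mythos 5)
Your proof is correct and takes essentially the same approach as the paper's. The only cosmetic difference is that you attach a separate sink vertex $z_f$ for each boundary edge, whereas the paper glues all of them into a single cemetery vertex $\Delta$ (creating multi-edges from a vertex of $\partial_v A$ to $\Delta$); the identification of the exit probability with a voltage via Proposition \ref{unitcurrent} and Ohm's law, and the telescoping along a shortest $A$-path using $|\mathbf{i}| \le 1$, are otherwise identical.
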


\begin{proof}
Consider the finite connected graph $A^\Delta$  whose vertex set is $A\cup \{\Delta\}$, with $\Delta$ an additional cemetery vertex, and whose edge 
set consists of all the original edges in $A$ plus, for each $e\in \partial_e A$, one additional  edge between $e^-$ and $\Delta$. The edges inside $A$ are assigned weight $1+\delta$ whereas the edges adjacent to $\Delta$ are assigned unit weight. Note that this construction may create multiple edges between some vertex in $A$ and $\Delta$.    

Let $f_1,f_2\in\partial_e A$ with tails  $x_1=f_1^-$ and $x_2=f_2^-$. By construction, the law of $S$ up to time $\sigma$, matches the law of the random walk on the network $A^\Delta$, up to the hitting time of $\Delta$. Thus, according to Proposition \ref{unitcurrent} and using Ohm's law, for any $a\in A$, we have
\[
 \bP_a[(S_{\sigma-1},S_\sigma) = f_i ] = \mathbf{i}(f_i) = c(f_i)\big(\mathbf{v}(x_i) - \mathbf{v}(\Delta)\big) = \textbf{v}(x_i), 
\]
where $\textbf{v}$ is the voltage at $x_i$ when a unit current $\mathbf{i}$ flows from $a$ to $\Delta$. 
By definition there exists a path of length $d_A(x_1,x_2)$ inside $A$ going from $x_1$ to $x_2$ and composed of edges with conductance $1+\delta$. Applying Ohm's law along this path and using \eqref{courrant}, we find that
\[
|\mathbf{v}(x_1) - \mathbf{v}(x_2)| \leq \frac{d_A(x_1,x_2)}{1+\delta},
\]
and the result follows. 
\end{proof}

The proposition above is fairly general since it does not make any assumption on the graph $G$ (it need not be of cylinder type). However, as time increases, so does the size of the boundary of the trace of the walk. Thus, without additional information on the distribution of the exit probabilities, the bound \eqref{bound:balance} applied to the ORRW becomes mostly useless when the number of possible exit edges becomes much larger than $\delta$ . 

In order to keep \eqref{bound:balance} relevant, we need to control the number of exit edges which have a non negligible probability of being chosen and show that they are $o(\delta)$.  This estimate which is missing from Vervoort's paper is the purpose of the next proposition. Unlike Proposition \ref{prop:balance}, it is specific to cylinder graphs.
\begin{proposition}\label{outbound}
Let $A$ be a finite connected sub-graph of $G = \Z \times \Gamma$ where $\Gamma$ is a finite connected graph. Fix $\delta>0$ and consider the random walk $S$ on the electrical network $G$ with conductances $c(e) := 1 + \delta\mathbf{1}_{\{e \in A\}}$. Fix $a\in A$ and suppose that there exist $d$ integers $\{ s_1, s_2, \ldots, s_d\}$ and $r$ such that 
\begin{enumerate}
\item $\coz{a} < s_1 <\dots < s_d < r$.
\item For each $i \in \{1,\ldots, d\}$, there exist $x \in \partial_{v}A$ with $\coz{x} = s_i$ (there is an exit edge at each level).
\end{enumerate}
Recall that $\sigma$ defined by \eqref{def:sigma} denotes the first time the walk exits the sub-graph $A$ and that $H_{r}$ denotes the first time it reaches level $r$. We have
\begin{equation}\label{eq:diffoutedge}
\bP_a\big( H_{r} < \sigma \big) \leq 5\exp\left(-\frac{1}{4^4|\Gamma|^3}\left(\frac{d^2}{1+\delta}\right)^{\frac{1}{3}}\right)
\end{equation}
\end{proposition}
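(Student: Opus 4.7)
The plan is to work in the cemetery-augmented network $A^\Delta$ introduced in the proof of Proposition \ref{prop:balance}, where all edges of $A$ carry conductance $1+\delta$ and each original exit edge is replaced by a conductance-$1$ edge to a cemetery vertex $\Delta$. Since the law of the walk up to $\sigma$ coincides with the law of the walk on $A^\Delta$ up to the hitting time of $\Delta$, we have the identification
\[
\bP_a(H_r < \sigma) \;=\; \mathbf{v}(a),
\]
where $\mathbf{v}$ is the harmonic voltage on $A^\Delta$ with $\mathbf{v}\equiv 1$ on $V_r := \{r\} \times \Gamma \cap A$ and $\mathbf{v}(\Delta) = 0$. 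The target is therefore to bound $\mathbf{v}(a)$.

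To do so, I partition the $d$ exit levels $s_1 < \cdots < s_d$ into $L \approx d/k$ disjoint blocks of $k$ consecutive levels (with $k$ a parameter to be optimized), and for each block $B_j$ introduce the stopping time $T_j :=$ first time the walk reaches a level strictly to the right of $B_j$. A strong-Markov argument applied at the successive $T_j$'s reduces the problem to a uniform lower bound
\[
p_k \;\le\; \inf_{j,\,x}\, \bP_x\!\left(\sigma \le T_j\right)
\]
over the block index $j$ and the possible entry vertex $x$; this yields $\bP_a(H_r < \sigma) \le (1-p_k)^L$.

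To estimate $p_k$ from below, the idea is to apply Proposition \ref{prop:balance} locally, restricting attention to a slab of $A$ of horizontal width of order $k$ around block $B_j$. Because the $k$ in-block exit vertices are pairwise within graph distance $O(k|\Gamma|)$ inside such a slab, the balance estimate forces their individual exit probabilities to be comparable up to an additive error of order $k|\Gamma|/(1+\delta)$. Combined with the total-mass constraint $\sum_f \bP_a(\mathrm{exit} = f)=1$ localised to the slab boundary (which has only $O(k + |\Gamma|)$ edges), summing over the $k$ in-block exits then produces a bound of the form $p_k \gtrsim k^2/\bigl((1+\delta)\,\mathrm{poly}(|\Gamma|)\bigr)$.

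The main obstacle is precisely this localisation step: Proposition \ref{prop:balance} is stated for the conductance assignment ``$1+\delta$ on $A$-edges and $1$ outside,'' whereas the true walk keeps conductance $1+\delta$ on all of $A$, including the portions beyond the slab boundary. Bridging this discrepancy requires a comparison argument between the true walk and a walk on a cut version of the network, based on Rayleigh's monotonicity principle (Proposition \ref{Rayleigh}) together with an explicit flow construction in the spirit of Thomson's principle (Proposition \ref{thomson}); controlling the error via Cauchy--Schwarz across level cuts of size $|\Gamma|$ is what should produce the factor $|\Gamma|^3$ in the final exponent. Once $p_k$ is secured, the upper constraint $p_k \le 1$ caps $k$ and a direct optimisation of $(1-p_k)^L$ in $k$ yields the claimed bound of order $\exp\bigl(-c\,(d^2/(1+\delta))^{1/3}/|\Gamma|^3\bigr)$.
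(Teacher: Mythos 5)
Your structural idea --- partition the exit levels into blocks of $k$ consecutive levels, apply the strong Markov property at the successive crossing times, and bound $\bP_a(H_r<\sigma)$ by a product $(1-p_k)^L$ --- is in the right spirit and is not far from what the paper does (the paper decomposes in time rather than in space, but the strong-Markov iteration is essentially the same). However, the crucial input you need, a uniform lower bound $p_k \gtrsim k^2/((1+\delta)\,\mathrm{poly}(|\Gamma|))$ on the probability of exiting $A$ before crossing a block, \emph{cannot} be derived from Proposition~\ref{prop:balance}, and this is where your proposal genuinely fails. Proposition~\ref{prop:balance} controls only the \emph{difference} of exit probabilities at two boundary edges; it says nothing about their absolute size. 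Summing it across the $O(k)$ in-block exits together with the total-mass constraint still leaves you with a trivial bound because the mass can concentrate on exit edges entirely outside the block, and the number of such edges (the size of $\partial_e A$) can be huge and is completely unconstrained. Your ``localisation'' paragraph recognizes this but does not resolve it: restricting attention to a slab and replacing the $1+\delta$ cut edges at the slab ends by unit-conductance exits changes the network in a non-monotone way that neither Rayleigh nor Thomson controls, and in particular the balance estimate on the cut network does not compare to the exit law on the original one.

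What is actually required is a lower bound on the time the walk spends on the levels $s_1,\dots,s_d$ (equivalently, on the number of visits to $\partial_vA$) before it reaches level $r$, and this bound must be \emph{uniform over all finite subgraphs $A$} --- which is precisely the delicate point. The paper proves this as Proposition~\ref{propDiffusif}, via a genuinely new ingredient: a flow-decomposition lemma (Lemma~\ref{lemmaFlow}) combined with Thomson's principle on a modified network with an auxiliary sink $z$, which gives the diffusive estimate $\bP^A_a(\mathcal{L}^r_{\mathcal{S}}<k)\le |\Gamma|\sqrt{8k}/d$. Once that estimate (bootstrapped to Corollary~\ref{cor:expodec}) is in hand, the proof of Proposition~\ref{outbound} follows by combining it with Corollary~\ref{cor:hitfrontA_new} and the elementary $\gtrsim 1/(|\Gamma|\delta)$ exit chance per visit to $\partial_vA$. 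None of this machinery appears in your proposal, and without it the claimed bound on $p_k$ is unsubstantiated.

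A secondary issue: even taking the claimed $p_k\gtrsim k^2/(1+\delta)$ on faith, the optimisation you describe does not produce the stated exponent. With $L\approx d/k$ one gets $(1-p_k)^L\lesssim\exp(-dk/(1+\delta))$, maximised at $k\approx\sqrt{1+\delta}$ (the cap $p_k\le1$), giving $\exp(-cd/\sqrt{1+\delta})$, not $\exp\bigl(-c(d^2/(1+\delta))^{1/3}\bigr)$. These exponents are genuinely different; the paper's weaker $1/3$-exponent reflects the compromise forced by the polynomial tail in Proposition~\ref{propDiffusif}. Your proposal therefore not only lacks the key estimate, it also would not reproduce the stated bound even if the estimate were granted.
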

The proposition above tells us that the random walk on the network \eqref{ORRWnetwork} cannot travel too far away horizontally without exiting its trace. More precisely, the number of incomplete levels it can cross before exiting is (at most) of order $\sqrt{\delta}$. This means that, for the ORRW, only the exit edges belonging to the nearest $\sqrt{\delta}$ incomplete levels have to be taken into account. But then, there are no more than $|\Gamma|^2\sqrt{\delta} \ll \delta$ such exit edges (because the underlying graph $G$ is a cylinder). Thus, we are now in the case where we can use Proposition \ref{prop:balance} to control the exit probabilities.

Let us also remark that the ratio $d^2/(1+\delta)$ in \eqref{eq:diffoutedge} is not surprising because the walk on any sub-graph $A$ of $G = \Z \times \Gamma$ is diffusive. Thus, we can expect that it spends a time of order $d^2$ inside a slice of diameter $d$. On the other hand, each time the walk visits a site on the boundary of $A$, it has a probability proportional to $1/(1+\delta)$  to exit it at the next step. This heuristic is simple but making it rigorous is challenging because the upper bound \eqref{eq:diffoutedge} needs to hold uniformly on all possible sub-graph $A$. The proof we present here is rather convoluted and will by carried out at the end of the next subsection.

\subsection{Proof of Proposition \ref{outbound}}

In this section, we need to consider random walks on different graphs. In order to distinguish between these processes, we will use super-script that refer to the underlying graph. For instance, given a sub-graph $A$ of $G$, probabilities relating to a random walk on $A$ will be denoted by  $\bP^A(\cdot)$ whereas we will keep the usual notation $\bP(\cdot)$ for  a random walk on the whole graph $G$. 

In everything that follows, $\Gamma$ denotes a finite connected graph. We start with a simple lemma which bounds the return time to a given vertex on the same level as the starting position for the simple random walk on $\Z\times \Gamma$.
  
\begin{lemma} \label{lem:hitfrontAx_new}
Consider the simple random walk on $G = \Z\times \Gamma$. Let $a,b \in G$ such that $\coz{a} =  \coz{b}$. Recall that $H_b$ 
denotes the hitting time of vertex $b$. We have  
$$\bP_a\left(H_{b} \, \le\,  4^6|\Gamma|^6\right)\  \ge\ \frac{1}{2}.$$
\end{lemma}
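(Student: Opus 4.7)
The plan is to reduce the estimate to the Commute-Time Identity (Proposition \ref{commute}) applied on a well-chosen finite box, and then transfer the resulting bound back to the walk on $G$ via a diffusivity estimate for the horizontal projection.

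Specifically, for a parameter $K$ to be optimized, I would work on the finite connected subgraph $B_K := \{\coz{a}-K,\dots,\coz{a}+K\}\times \Gamma$. Since $a$ and $b$ share the same horizontal coordinate and $\Gamma$ is connected, they are joined inside $\{\coz{a}\}\times\Gamma$ by a path of length at most $|\Gamma|$, giving $\mathcal{R}^{B_K}(a\leftrightarrow b)\le |\Gamma|$. Summing the degrees in $B_K$ (each vertex has at most two horizontal neighbors, and $\sum_{g}d_\Gamma(g) = 2|E(\Gamma)| \le |\Gamma|^2$) yields $\sum_{x\in B_K}\pi(x)\le 6K|\Gamma|^2$. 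Applying Proposition \ref{commute} together with the trivial bound $\mathbb{E}_a^{B_K}[H_b]\le \mathbb{E}_a^{B_K}[H_b]+\mathbb{E}_b^{B_K}[H_a]$, we obtain $\mathbb{E}_a^{B_K}[H_b]\le 6K|\Gamma|^3$, and Markov's inequality gives $\bP_a^{B_K}(H_b > T)\le 6K|\Gamma|^3/T$.

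To transfer this estimate to $G$, I would couple the walk on $G$ with the walk on $B_K$ so that the two processes coincide up to the first time $\tau$ at which the walk hits one of the boundary levels $\{\coz{a}\pm K\}\times\Gamma$; this is possible because the simple random walk transition laws at every interior vertex of $B_K$ are identical in both graphs. Splitting on whether $H_b<\tau$ then gives
\[
\bP_a^G(H_b \le T) \ge \bP_a^{B_K}(H_b \le T) - \bP_a^G(\tau\le T).
\]
The last term is controlled by observing that the horizontal projection $(\coz{X_n})_{n\ge 0}$ is a martingale on $G$ (by left-right symmetry in the $\Z$-direction) with increments in $\{-1,0,1\}$, so $(\coz{X_n}-\coz{a})^2$ is a non-negative submartingale whose expectation at step $n$ is bounded by $n$; Doob's maximal inequality then gives $\bP_a^G(\tau\le T)\le T/K^2$.

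Combining the two bounds, $\bP_a^G(H_b\le T) \ge 1 - 6K|\Gamma|^3/T - T/K^2$. Choosing $K = 128|\Gamma|^3$ and $T = 4^6|\Gamma|^6$, the two error terms become $3/16$ and $1/4$, yielding $\bP_a^G(H_b\le 4^6|\Gamma|^6)\ge 9/16 \ge 1/2$. The step I expect to require the most care is the coupling: one must verify that the walks on $G$ and on $B_K$ genuinely have the same transition probabilities at every interior vertex of $B_K$, so that the coupling holds up to $\tau$ and the two probabilities above can indeed be compared. Once this is granted, the rest of the argument is a routine optimization of $K$ and $T$, and the degenerate case $|\Gamma|=1$ is trivial since then $a=b$.
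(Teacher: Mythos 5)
Your proposal is correct and follows essentially the same route as the paper's proof: apply the commute-time identity on a finite box of radius of order $|\Gamma|^3$, bound the effective resistance by the graph distance $|\Gamma|$, use Markov's inequality, and then control the probability that the walk escapes the box before time $4^6|\Gamma|^6$ via a horizontal-diffusivity estimate. The only cosmetic difference is that you bound the escape probability with Doob's maximal inequality applied to the submartingale $(\coz{X_n}-\coz{a})^2$, whereas the paper couples the horizontal coordinate with a simple random walk on $\Z$ and invokes the reflection principle; both give the same order-$T/K^2$ bound.
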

Recall that, for a subgraph $A\subset G $,  $\partial_vA$, defined in \eqref{defpartialsV}, denotes the set of vertices of $A$ which are adjacent to an edge outside $A$. Since we can couple the simple random walk on $G$ and the simple random walk restricted on $A$ so that they coincide until they reach a vertex of $\partial_vA$, the previous lemma directly entails
\begin{corollary}\label{cor:hitfrontA_new}
Let $A$ be a sub-graph of $G = \Z\times \Gamma$. Consider the simple random walk on $A$, i.e.~on the network $G$ with conductances $c(e)=\mathbf{1}_{\{e\in A\}}$. For any $a\in A$ such that $\partial_vA$ contains at least one vertex at level $\coz{a}$, we have
$$\bP^A_a\left(H_{\partial_vA} \, \le\,  4^6|\Gamma|^6\right)\  \ge\ \frac{1}{2}.$$
\end{corollary}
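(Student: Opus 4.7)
The plan is to read off the corollary from Lemma \ref{lem:hitfrontAx_new} via a direct coupling between the simple random walk on $A$ and the simple random walk on the ambient graph $G=\Z\times\Gamma$, stopped at the first entrance into $\partial_v A$.

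By hypothesis we can pick $b\in\partial_v A$ with $\coz{b}=\coz{a}$. Run the simple random walk $(X_n)_{n\ge0}$ on $G$ starting at $a$. Every vertex $x\in A\setminus\partial_v A$ has, by definition of $\partial_v A$, all of its incident edges inside $A$; in particular, for such $x$ the transition kernel of the simple random walk on $G$ coincides with the transition kernel of the walk on $A$ (the electrical network with conductances $c(e)=\mathbf{1}_{\{e\in A\}}$) at $x$. Therefore, if we define $(Y_n)_{n\ge0}$ to follow $X_n$ step-by-step until the stopping time $\tau:=\inf\{n\ge0:X_n\in\partial_v A\}$ and then let it evolve as an independent SRW on $A$ from $X_\tau$, the process $(Y_n)$ is a realization of SRW on $A$ starting from $a$, and by construction it hits $\partial_v A$ no later than $X$ does.

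Now apply Lemma \ref{lem:hitfrontAx_new} to $(X_n)$ with the chosen vertex $b$: since $b$ is at the same level as $a$, we get $\bP_a(H_b\le 4^6|\Gamma|^6)\ge 1/2$. Because $b\in\partial_v A$, the event $\{H_b\le 4^6|\Gamma|^6\}$ is contained in the event that $X$ has entered $\partial_v A$ by that time, which in turn is contained in the event that $Y$ has entered $\partial_v A$ by that time. Hence
\[
\bP^A_a\!\left(H_{\partial_v A}\le 4^6|\Gamma|^6\right)\ \ge\ \bP_a\!\left(H_b\le 4^6|\Gamma|^6\right)\ \ge\ \frac{1}{2},
\]
which is the stated conclusion.

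There is no real obstacle here; the only point worth being careful about is that the coupling must be defined only up to the entrance time into $\partial_v A$, since a vertex of $\partial_v A$ has at least one neighbor outside $A$ and the two walks have genuinely different transition kernels there. Restricting the coupling to $[0,\tau]$ avoids this issue and is sufficient, because the event we need to transfer is already determined by time $\tau$.
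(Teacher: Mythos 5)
Your proof is correct and is essentially the same argument the paper gives: the paper also deduces the corollary from Lemma \ref{lem:hitfrontAx_new} by coupling the walk on $G$ with the walk on $A$ so that they coincide until first reaching $\partial_v A$, you have merely spelled out the coupling and the inclusion of events in more detail.
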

\begin{proof}[Proof of Lemma \ref{lem:hitfrontAx_new}]
Without loss of generality, we can assume that $\coz{a} = \coz{b} = 0$. Fix $L = 4^4 |\Gamma|^3$ and consider the graph $G_L := [|-L,L|]\times\Gamma$ on which we put unit conductances (\emph{i.e.} we are considering the SRW on the graph). Noticing that the number of oriented edges in $G_L$ is bounded by $(2L + 1)|\Gamma|(|\Gamma|-1) + 4L|\Gamma| \leq 4L|\Gamma|^2$, Proposition \ref{commute} (the commute-time identity) shows that 
$$\E^{G_L}_{a}(H_b) \le 4L|\Gamma|^2\RR_{G_L}(a\leftrightarrow b).$$ Furthermore, the effective resistance between $a$ and $b$ is bounded by the graph distance between those two vertices. This may be checked, for instance, from Rayleigh's monotonicity principle (Proposition \ref{Rayleigh}) by putting null conductances everywhere except on a geodesic path between the two vertices. Since $a$ and $b$ are on the same level, we deduce that $\E^{G_L}_{a}(H_b) \le 4L|\Gamma|^3$. Set $T := 4^2 L|\Gamma|^3 = 4^6|\Gamma|^6$. Using  Markov's inequality, we find that 
\[
\bP^{G_L}_a\left(H_{b}\le T\right)\ge\frac{3}{4}.
\]
On the other hand, if $S$ denotes a simple random on $\Z$ starting from $0$, an application of the reflection principle shows that 
$$
\bP\Big(\max_{k\leq T} |S_k| \geq L\Big)  = \bP\Big(\max_{k\leq T} |S_k| \geq 4\sqrt{T}\Big) \leq 4\bP\Big(S_T \geq 4\sqrt{T}\Big) \leq  4\frac{\E[S^2_T]}{(4\sqrt{T})^2} = \frac{1}{4}
$$
Thus, the  previous inequality shows that the probability that the simple random walk on $G_L$ starting from $a$ hits level $L$ or $-L$ before time $T$ is at most $1/4$. But, until this happens, the random walks on $G$ and on $G_L$ coincide so we conclude that
$$
\bP_a\left(H_b \leq T\right) \;\geq\; \bP^{G_L}_a\left(H_b \leq T < H_{-L} \wedge H_{L}\right) \;\geq\; \bP^{G_L}_a\left(H_b \leq T\right) - \frac{1}{4} \;\geq\; \frac{1}{2}.
$$
\end{proof}  

\begin{proposition}\label{propDiffusif}
Let $A$ be a finite connected sub-graph of $G=\Z\times \Gamma$. Fix $a \in A$, and $d\geq 1$. Let $\mathcal{S} = \{ s_1, s_2, \ldots, s_d \}$ and $r$ be integers such that
$$
\coz{a} < s_1 < s_2 < \ldots < s_d < r, \quad\hbox{and}\quad A \cap (\{r\} \times \Gamma) \neq \emptyset.
$$
Consider the simple random walk $S$ on $A$ started from $a$. Recall that $H_{r}$ denotes the hitting time of the level set at $r$. This stopping time is a.s. finite since $A$ is finite and has a vertex at level $r$. Let $\mathcal{L}^r_{\mathcal{S}}$ be the total time spent on the levels of $\mathcal{S}$ before reaching level $r$:
$$
\mathcal{L}^r_{\mathcal{S}} :=  |\{n \leq H_{r}\;:\; \coz{S_n} \in \mathcal{S}\}|
$$
We have
$$
\bP^A_a\big( \mathcal{L}^r_{\mathcal{S}} < k\big) \leq \frac{|\Gamma| \sqrt{ 8 k}}{d}\quad\hbox{for any $k\geq 1$.}
$$
\end{proposition}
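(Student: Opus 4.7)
The target bound $|\Gamma|\sqrt{8k}/d = \sqrt{8|\Gamma|^2 k}/d$ has precisely the form produced by Doob's $L^1$ maximal inequality applied to a square-integrable martingale whose quadratic variation after $k$ skeleton steps is bounded by $8|\Gamma|^2 k$ and which must attain a displacement of order $d$. My plan is to reduce the event $\{\mathcal{L}^r_{\mathcal{S}} < k\}$ to a level-crossing event for the horizontal-projection process sampled at visits to $\mathcal{S}$, then control the latter via a martingale estimate.

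First I would set up a skeleton reduction. Let $\tau_1 < \tau_2 < \cdots$ be the successive times at which $S$ visits a level in $\mathcal{S}$, so that $\mathcal{L}^r_{\mathcal{S}}$ equals the number of such times before $H_r$. Write $Z_j := \coz{S_{\tau_j}} \in \mathcal{S}$ and let $I_j \in \{1,\dots,d\}$ be the index with $Z_j = s_{I_j}$. Because a step of $S$ changes $\coz{S_n}$ by at most $1$, and because $\mathcal{S}$ is totally ordered in $\Z$, the walk cannot cross an $\mathcal{S}$-level without visiting it; hence between $\tau_j$ and $\tau_{j+1}$ it can only reach an index adjacent to $I_j$, giving $|I_{j+1}-I_j|\leq 1$. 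Since $\coz{a}<s_1$, we must have $I_1=1$, and since the walk eventually reaches level $r>s_d$ it must, on the last $\mathcal{S}$-visit before $H_r$, be at index $d$. Consequently $\{\mathcal{L}^r_{\mathcal{S}} < k\} \subseteq \{\max_{j\leq k} I_j \geq d\}$.

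Next, I would apply Doob's maximal inequality to a carefully chosen martingale adapted to the skeleton. The naive choice $M_j := Z_j$ does not work because the induced transition probabilities are biased by the effective-resistance geometry of $A$. Instead I would use a harmonic transform: take $\psi$ to be the voltage on $A$ harmonic outside $\{a\}\cup(\{r\}\times\Gamma)$, and set $M_j := \psi(S_{\tau_j})$; by harmonicity and the strong Markov property, $M_j$ is a martingale up to the escape time. Using Ohm's law together with the bound $|\mathbf{i}(x,y)|\leq 1$ on the unit current (see \eqref{courrant}) and Nash–Williams applied to the horizontal cutsets at each level (which contain at most $|\Gamma|$ edges), I would show that the per-step variance of $M_j$ is bounded by $8|\Gamma|^2$ and that reaching $I_j = d$ forces $|M_j - M_1| \geq d$. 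Then the $L^1$ Doob inequality yields
\[
\bP_a^A\big(\max_{j\leq k}|M_j-M_1|\geq d\big) \;\leq\; \frac{\sqrt{\E[(M_k-M_1)^2]}}{d} \;\leq\; \frac{\sqrt{8|\Gamma|^2 k}}{d} \;=\; \frac{|\Gamma|\sqrt{8k}}{d}.
\]

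The main obstacle is the quantitative control of $\psi$: one must verify, uniformly over \emph{all} finite connected subgraphs $A\subseteq\Z\times\Gamma$, that the voltage increments along skeleton excursions have second moment at most $8|\Gamma|^2$ while the total displacement from $I=1$ to $I=d$ is at least $d$. A slightly different route, which might be more direct, is to apply the Doob decomposition to $\coz{S_n}$ itself at every step (not only at $\mathcal{S}$-visits), show that the compensator arising from the horizontal boundary $\partial_vA$ does not grow faster than the martingale part, and relate the predictable quadratic variation $\langle M\rangle_n$ to $\mathcal{L}^r_{\mathcal{S}}$ by using that $\E[(\Delta \coz{S_{n+1}})^2\mid\mathcal{F}_n]$ is of order $1/\deg_A(S_n) \gtrsim 1/|\Gamma|$ — this precisely accounts for the $|\Gamma|$-dependence in the stated bound.
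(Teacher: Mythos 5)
Your approach is genuinely different from the paper's, which never uses a martingale: the authors attach to every vertex at a level in $\mathcal{S}$ an extra edge of small conductance $\eta$ leading to a new sink $z$, glue level $r$ into a single sink $b$, and then perturb the unit current from $a$ to $\{b,z\}$ using a flow-decomposition lemma (Lemma \ref{lemmaFlow}) together with Thomson's principle to show that $\bP(\hbox{hit }b\hbox{ before }z)\le 2|\Gamma|^2/(\eta d^2)$; optimizing over $\eta$ then gives the stated bound. Your skeleton reduction $\{\mathcal{L}^r_{\mathcal{S}}<k\}\subseteq\{\max_{j\le k}I_j\ge d\}$ is correct and is a natural first step, but the rest of your argument has concrete gaps, so it does not currently constitute an alternative proof.

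The central problem is that $\psi(S_{\tau_j})$ is \emph{not} a martingale: the voltage $\psi$ is harmonic only away from $\{a\}\cup(\{r\}\times\Gamma)$, it attains its maximum at $a$ (so it is strictly super-mean-value there), and the walk can pass through $a$ between two skeleton times whenever $I_j=1$ (and it will, since between visits to level $s_1$ the walk freely wanders into $\{x:\coz{x}<s_1\}$). So $\psi(S_n)$ is only a supermartingale, which kills the Doob $L^1$ step you rely on — indeed the identity $\E[\langle\psi(S)\rangle_{H_Z}]=\psi(a)^2-\E[\psi(S_{H_Z})^2]$ would come out negative if $\psi(S_n)$ were a true martingale, which shows something is off. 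Even setting this aside, your two remaining quantitative claims are unsupported and in fact do not match up: (i) a per-skeleton-step variance bound $\le 8|\Gamma|^2$ cannot hold uniformly in $A$ and $\mathcal{S}$, because the skeleton increment $\psi(S_{\tau_{j+1}})-\psi(S_{\tau_j})$ is the voltage difference between two vertices at adjacent $\mathcal{S}$-levels, and when the gaps $s_{i+1}-s_i$ are large this can be as large as $(s_{i+1}-s_i)/|\Gamma|$, which is unbounded (the expected quadratic variation accumulated between two skeleton times equals the expected Dirichlet energy picked up during the excursion, which grows with the gap); and (ii) the displacement claim ``$I_j=d$ forces $|M_j-M_1|\ge d$'' is too strong — Nash–Williams applied to the $d$ horizontal cutsets of size $\le|\Gamma|$ gives a resistance, hence voltage-drop, lower bound of order $d/|\Gamma|$, not $d$, and even that bounds an \emph{average} voltage drop across cutsets rather than the voltage difference between the two specific skeleton positions $S_{\tau_1}$ and $S_{\tau_j}$. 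Your closing alternative (Doob decomposition of $\coz{S_n}$) has a similar difficulty: the quadratic variation of $\coz{S_n}$ up to $H_r$ grows with $H_r$, not with $\mathcal{L}^r_{\mathcal{S}}$, so there is no direct route from the predictable bracket to the local time on $\mathcal{S}$. The paper's flow-perturbation argument is, in a sense, precisely the tool that replaces these missing uniform-in-$A$ estimates.
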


\begin{figure}
\begin{center}
\includegraphics[height=3.7cm]{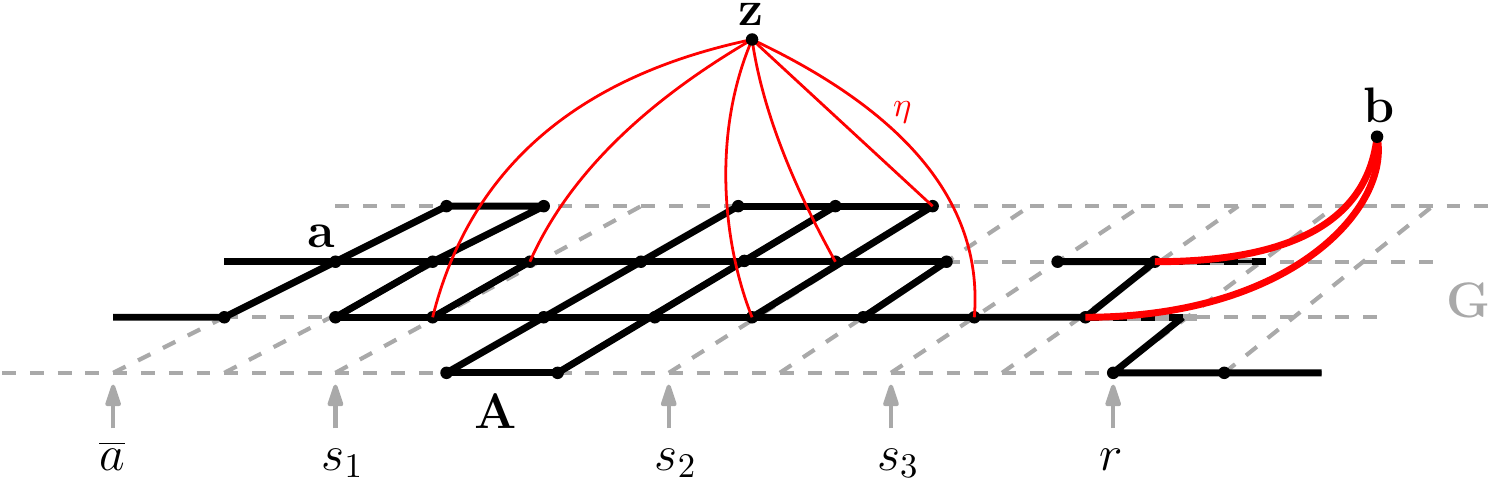}
\end{center}
\caption{\label{figTildeA}\small{Example of a modified graph $\tilde{A}$ on $G = \Z\times \{1,2,3,4\}$ with $d=3$. The modification are marked in red. The fat edges have conductance $1$ whereas the thin edges adjacent to $z$ have conductance $\eta$.}}
\end{figure}

\begin{proof} 
We construct a modified electrical network $\tilde{A}$ in the following way. First, we put unit conductances on each edge of $A$. Then, we glue all vertices of $A$ at the final height $r$ together and call the resulting vertex $b$. We also fix $\eta >0$ and create a new vertex $z$ connected by an edge of conductance $\eta > 0$ to every vertices $x\in A$ such that $\coz{x} \in \mathcal{S}$ (\emph{c.f.} figure \ref{figTildeA} for an illustration). Consider now a unit current $\mathbf{i}$ flowing from $a$ to $\{b,z\}$. Let 
$$\mathbf{i}(b) := \sum_{x\sim b} \mathbf{i}(x,b),$$ 
be the total current flowing into the sink vertex $b$. For $k=1,\ldots,d$, let also 
$$
\mathbf{i}_k := \sum_{x\in A\; : \; \coz{x} = s_k} \mathbf{i}(x,z),
$$
denotes the total current flowing from level $s_k$ to $z$. From a probabilistic point of view, $\mathbf{i}(b)$ is the probability that the walk on the electrical network $\tilde{A}$ started from $a$ hits $b$ before hitting $z$. Similarly, $\mathbf{i}_k$  is the probability that the walk hits $z$ before $b$ while exiting through one of the edges added at level $s_k$. We now show that
\begin{equation}\label{ik_j_bound}
\mathbf{i}_k \geq \frac{(d-k + 1) \eta \mathbf{i}(b) }{|\Gamma|^2} \quad\hbox{for any $k = 1,\ldots, d$.}
\end{equation}
To do so, we will need the following lemma which provides a decomposition of a flow without cycle on arbitrary graphs.
\begin{lemma}\label{lemmaFlow} Consider a finite connected graph A and fix three vertices $a,b,z\in A$. Let $\mathbf{i}$ be a flow from $a$ to $\{b,z\}$ such that
\begin{enumerate}
\item For all $x\in A$, we have $\mathbf{i}(a,x) \geq 0$ (source) and $\mathbf{i}(z,x) \leq 0$ and $\mathbf{i}(b,x) \leq 0$ (sinks). 
\item the flow $\mathbf{i}$ does not have any cycle. 
\end{enumerate}
Then, there exist a flow $\mathbf{j}$ on $A$ from $a$ to $b$ such that 
\begin{enumerate}
\item[(a)] $\mathbf{j}(x,b) = \mathbf{i}(x,b)$ for any $x \in A$. Therefore, $\mathbf{j}$ is a flow of total strength $\|\mathbf{j}\| = \mathbf{j}(b) = \mathbf{i}(b) \leq \|\mathbf{i}\|$.
\item[(b)] $\mathbf{j}(x,z) = 0$ for all $x\in A$ (nothing flows in $z$). 
\item[(c)] For any $x,y \in A$, $\mathbf{i}(x,y) \mathbf{j}(x,y) \geq 0$ (the flows $\mathbf{i}$ and $\mathbf{j}$ have the same direction). 
\item[(d)] For any $x,y \in A$, $|\mathbf{j}(x,y)| \leq |\mathbf{i}(x,y)|$.
\item[(e)] For any $x,y \in A$, $|\mathbf{j}(x,y)| \leq \|\mathbf{j}\| = \mathbf{i}(b)$.  
\end{enumerate}  
(here and below, we use the convention that, for any flow, $\mathbf{j}(x,y) = 0$ if $x$ and $y$ are not neighbors).
\end{lemma}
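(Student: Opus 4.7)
The plan is a standard flow decomposition. I will write $\mathbf{i}$ as a finite non-negative combination $\sum_P \alpha_P \mathbf{u}_P$, where $P$ ranges over simple directed paths from $a$ to $b$ or from $a$ to $z$ and $\mathbf{u}_P$ denotes the antisymmetric unit flow along $P$, and then define $\mathbf{j}$ as the sub-sum restricted to the paths ending at $b$. Properties (a)--(e) will then be pure bookkeeping, and the only real subtlety is producing the decomposition itself, which relies essentially on the acyclicity hypothesis (2).

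To extract one path I proceed as follows. If $\mathbf{i}\equiv 0$ there is nothing to do. Otherwise I pick an edge $(a,x_1)$ with $\mathbf{i}(a,x_1)>0$; by Kirchhoff's node law at any interior vertex $x_k\notin\{a,b,z\}$ the existence of an incoming edge with positive flow forces the existence of an outgoing edge with positive flow, so I can extend the walk one directed edge at a time along positive-flow edges. A vertex is never revisited, since a repeat would close a directed cycle of positive-flow edges, contradicting (2). In finitely many steps the walk therefore reaches $\{b,z\}$, yielding a simple directed path $P$ on which $\mathbf{i}$ is pointwise strictly positive. Setting $\alpha_P:=\min_{e\in P}\mathbf{i}(e)>0$ and replacing $\mathbf{i}$ by $\mathbf{i}-\alpha_P\mathbf{u}_P$, the residual flow still obeys (1) and (2) (its positive-direction support only shrinks, so no new cycles can appear, and the signs at $a$, $b$, $z$ are preserved) and has at least one new zero edge; iterating exhausts $\mathbf{i}$ in finitely many steps and produces disjoint families $\mathcal{P}_b$ and $\mathcal{P}_z$ of simple paths from $a$ to $b$ and from $a$ to $z$ respectively, together with positive weights $\alpha_P$.

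A key observation is that no path $P\in\mathcal{P}_b$ visits $z$: such a path would exit $z$ along some edge $(z,y)$ on which $\mathbf{u}_P>0$, hence on which the original $\mathbf{i}$ is strictly positive, contradicting $\mathbf{i}(z,\cdot)\leq 0$ from (1); by the same token no path in $\mathcal{P}_z$ visits $b$. Set $\mathbf{j}:=\sum_{P\in\mathcal{P}_b}\alpha_P\mathbf{u}_P$. Property (b) is then immediate, and (a) follows because only paths in $\mathcal{P}_b$ touch an edge incident to $b$, so $\mathbf{j}(x,b)=\mathbf{i}(x,b)$ for every $x$ and $\|\mathbf{j}\|=\mathbf{i}(b)$. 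For (c) and (d), every extracted path traverses a directed edge $(x,y)$ only in the direction where $\mathbf{i}(x,y)$ is strictly positive, so on any edge
$$\mathbf{j}(x,y)\;=\;\sum_{P\in\mathcal{P}_b,\;(x,y)\in P}\alpha_P,\qquad \mathbf{i}(x,y)\;=\;\sum_{P\in\mathcal{P}_b\sqcup\mathcal{P}_z,\;(x,y)\in P}\alpha_P,$$
which immediately yields $\mathbf{i}(x,y)\mathbf{j}(x,y)\geq 0$ and $|\mathbf{j}(x,y)|\leq|\mathbf{i}(x,y)|$. Finally (e) follows from $|\mathbf{j}(x,y)|\leq\sum_{P\in\mathcal{P}_b}\alpha_P=\|\mathbf{j}\|=\mathbf{i}(b)$.
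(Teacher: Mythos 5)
Your proof is correct, but it takes a genuinely different route than the paper. You invoke the classical flow-decomposition theorem: greedily extract simple directed paths from $a$ to $\{b,z\}$ along positive-flow edges (using acyclicity to prevent revisits and the sign conditions to keep paths from passing through the wrong sink), write $\mathbf{i}$ as a positive combination of the corresponding unit path flows, and then take $\mathbf{j}$ to be the sub-sum over paths ending at $b$; properties (a)--(e) then read off. The paper instead constructs $\mathbf{j}$ directly by a backward exploration of the flow-induced orientation: it sets $\mathbf{j}=\mathbf{i}$ on edges into $b$ and $\mathbf{j}=0$ on edges into $z$, then repeatedly finds an ``active'' vertex all of whose outgoing $\mathbf{j}$-values are already fixed and extends $\mathbf{j}$ proportionally on its incoming edges, with acyclicity guaranteeing such a vertex always exists. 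Your route is more modular and makes the sign/domination properties (c)--(e) transparent at a glance, at the cost of a separate verification that each residual flow still satisfies hypotheses (1) and (2); the paper's route avoids that bookkeeping but hides (c)--(e) inside the proportionality rule. One small point you should make explicit: in the extraction step you assert that a nonzero residual flow admits an edge $(a,x_1)$ with strictly positive flow. This follows because, by condition (1), if all $\mathbf{i}(a,\cdot)\le 0$ forced the strength to be $\le 0$, then the flows into $b$ and $z$ would each be zero, so $\mathbf{i}$ would satisfy Kirchhoff everywhere; a nonzero circulation always contains a directed cycle of positive-flow edges, contradicting (2). It is worth spelling this out since it is the base case of your iteration.
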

We postpone the proof of the lemma to finish that of the proposition. Fix $k\in \{1,\ldots,d\}$ and $0 < \varepsilon < 1$. Since the current $\mathbf{i}$ fulfills assumptions 1. and 2. of the lemma, we can consider the flow $\mathbf{j}$ as above and use it to create a new unit flow $\mathbf{i}^{k,\varepsilon}$ from $a$ to $\{b,z\}$ where some of the original current flowing into $b$ is diverted toward $z$ by going through the edges of conductance $\eta$ added at level $s_k$. More precisely, we set, for $x,y \in A - \{z\}$, 
$$
\mathbf{i}^{k,\varepsilon}(x,y)  := 
\begin{cases}
\mathbf{i}(x,y)  & \hbox{if both $\coz{x} \leq s_k$ and $\coz{y} \leq s_k$}\\
\mathbf{i}(x,y) - \varepsilon \mathbf{j}(x,y) & \hbox{if either $\coz{x} > s_k$ or $\coz{y} > s_k$,}\\
\end{cases}
$$
and 
$$
\mathbf{i}^{k,\varepsilon}(x,z) := - \mathbf{i}^{k,\varepsilon}(z,x) := 
\begin{cases}
\mathbf{i}(x,z)  & \hbox{if $\coz{x} \neq s_k$.}\\
\mathbf{i}(x,z) + \varepsilon \mathbf{j}(x,\vec{x}) & \hbox{if $\coz{x} = s_k$ where $\vec{x}$ is the right neighbour of $x$} \\
&\hbox{at level $s_k + 1$ provided the edge $\{x,\vec{x}\}$ exists.}
\end{cases}
$$
It is clear that $\mathbf{i}^{k,\varepsilon}$ satisfies the flow property. In words, the flow $\mathbf{i}^{k,\varepsilon}$ coincides 
with $\mathbf{i}$ for levels below or equal to $s_k$ and coincides with $\mathbf{i} - \varepsilon \mathbf{j}$ for levels above $s_k + 1$. In order to maintain the flow node's law, the missing flow going through the cut-set of horizontal edges between levels $s_k$ and $s_k+1$ is re-routed through the edges at level $s_k$ that link to $z$.  Let $\mathcal{E}(\mathbf{i})$ (resp. $\mathcal{E}(\mathbf{i}^{k,\varepsilon})$) denotes the energy dissipated by $\mathbf{i}$ (resp. $\mathbf{i}^{k,\varepsilon}$). We estimate
\begin{eqnarray}
\nonumber\Delta &:=& \mathcal{E}(\mathbf{i}^{k,\varepsilon}) - \mathcal{E}(\mathbf{i}) \\
\nonumber& = &\frac{1}{2} \sum_{
\stackrel{x,y \in A}{\coz{x} > s_k \hbox{ \tiny{or} } \coz{y} > s_k}
} \Big[\big(\mathbf{i}(x,y) - \varepsilon \mathbf{j}(x,y)\big)^2 - \mathbf{i}(x,y)^2\Big] \\
\label{DeltaEnergy}&&  + \frac{1}{\eta} \sum_{x \in A \;:\; \coz{x} = s_k} 
\Big[\big(\mathbf{i}(x,z) + \varepsilon \mathbf{j}(x,\vec{x})\big)^2 - \mathbf{i}(x,z)^2\Big]
\end{eqnarray}
Here the factor $\frac{1}{\eta}$ corresponds to the resistance of the added edges. Thanks to properties (c) and (d) of the lemma, each term in the first sum is non-positive so we can upper bound this sum by keeping only the terms corresponding to edges of the form $(x,\vec{x})$ where $\coz{x} = s_l$ for some $k \leq \ell \leq d$:   
\begin{eqnarray*}
\frac{1}{2}\sum_{
\stackrel{x,y \in A}{\coz{x} > s_k \hbox{ \tiny{or} } \coz{y} > s_k}
} \Big[\big(\mathbf{i}(x,y) - \varepsilon \mathbf{j}(x,y)\big)^2 - \mathbf{i}(x,y)^2\Big] 
& \leq & \sum_{\ell = k}^d \sum_{\stackrel{x \in A}{\coz{x} = s_\ell}} 
\Big[\big(\mathbf{i}(x,\vec{x}) - \varepsilon \mathbf{j}(x,\vec{x})\big)^2 - \mathbf{i}(x,\vec{x})^2\Big] \\
&=&  -2\varepsilon \sum_{\ell = k}^d \sum_{\stackrel{x\in A}{\coz{x} = s_\ell}} \mathbf{i}(x,\vec{x})\mathbf{j}(x,\vec{x}) + \mathcal O(\varepsilon^2). 
\end{eqnarray*}
Now, since the flow $\mathbf{j}$ has strength $\mathbf{i}(b)$ and there are at most $|\Gamma|$ edges in the cutset of edges linking level $s_\ell$ to $s_\ell + 1$, there must exist some $x \in A$ with $\coz{x} = s_\ell$ such that $|\mathbf{i}(x, \vec{x})| \geq |\mathbf{j}(x, \vec{x})| \geq \frac{\mathbf{i}(b)}{|\Gamma|}$. Thus, we deduce that
\begin{equation}\label{EnergyBound1}
\frac{1}{2}\sum_{
\stackrel{x,y \in A}{\coz{x} > s_k \hbox{ \tiny{or} } \coz{y} > s_k}
} \Big[\big(\mathbf{i}(x,y) - \varepsilon \mathbf{j}(x,y)\big)^2 - \mathbf{i}(x,y)^2\Big] \leq -\frac{2\varepsilon (d -k + 1)\mathbf{i}(b)^2}{|\Gamma|^2} + \mathcal O(\varepsilon^2).
\end{equation}
On the other hand, using the fact that $\mathbf{i}(x,z) \geq 0$ and property (e), the second term in \eqref{DeltaEnergy} can be upper bounded by:
\begin{eqnarray}
\nonumber \frac{1}{\eta} \sum_{x \in A \;:\; \coz{x} = s_k} 
\Big[\big(\mathbf{i}(x,z) + \varepsilon \mathbf{j}(x,\vec{x})\big)^2 - \mathbf{i}(x,z)^2\Big]
& =& \frac{2\varepsilon}{\eta} \sum_{x \in A \;:\; \coz{x} = s_k} \mathbf{i}(x,z) \mathbf{j}(x,\vec{x}) + \mathcal O(\varepsilon^2)\\
\nonumber&\leq & \frac{2\varepsilon \mathbf{i}(b)}{\eta} \sum_{x \in A \;:\; \coz{x} = s_k} \mathbf{i}(x,z) + \mathcal O(\varepsilon^2).\\
\label{EnergyBound2}& = & \frac{2\varepsilon \mathbf{i}(b) \mathbf{i}_k}{\eta}   + \mathcal O(\varepsilon^2). 
\end{eqnarray}
According to Thomson's principle, the unit current has minimal energy among all unit flows hence $\Delta(\varepsilon) \geq 0$ for all $\varepsilon$. Combining \eqref{DeltaEnergy}, \eqref{EnergyBound1} and \eqref{EnergyBound2}, we conclude that
$$
-\frac{2\varepsilon (d -k + 1)\mathbf{i}(b)^2}{|\Gamma|^2} + \frac{2\varepsilon \mathbf{i}(b) \mathbf{i}_k}{\eta} + \mathcal O(\varepsilon^2) \geq 0 
$$  
which finally yields \eqref{ik_j_bound} by letting $\varepsilon$ tend to $0$. 

The remaining of the proof is rather straightforward. First, we sum \eqref{ik_j_bound} for $k = 1,\ldots,d$. Since $\mathbf{i}$ is a unit current, we find that
$$
1 \, = \, \mathbf{i}(b) +  \sum_{k=1}^d \mathbf{i}_k  \,\geq\, \sum_{k=1}^d \mathbf{i}_k \,\geq\, \frac{\eta \mathbf{i}(b)}{|\Gamma|^2 }\sum_{k=1}^d (d-k +1) \,\geq\, \frac{\eta \mathbf{i}(b) d^2}{2 |\Gamma|^2}, 
$$
and therefore, recalling the probabilistic interpretation of $\mathbf{i}(b)$, we have proved that
\begin{equation}\label{LV0}
\bP^{\tilde{A}}_a\big(\hbox{the random walk on $\tilde{A}$ hits $b$ before $z$} \big) \, \leq \, \frac{2|\Gamma|^2}{\eta d^2}.
\end{equation}
Let us now consider the natural coupling of the random walks $X$ (resp. $\tilde{X}$) starting from $a$ on the electrical networks $A$ (resp. $\tilde{A}$) such that both walks coincide until $\tilde{X}$ hits $\{z,b\}$.  More precisely, we construct both walks by first tossing a (biaised) coin at each step  to decide whether $\tilde{X}$ exits by an edge of conductance $\eta$ when the coin gives a "head"  (and such an edge exist) and otherwise move the two walks together. Recall that $\mathcal{L}^r_{\mathcal{S}}$ is the total time spent over the vertices at levels belonging to $\mathcal{S}$ before hitting level $r$ (\emph{i.e.} hitting $b$). Furthermore, the vertices corresponding to levels in 
$\mathcal{S}$ are, by construction, the vertices that share an edge of conductance $\eta$ with $z$. Thus, we have
\begin{equation}\label{LV1}
\bP^A_a\big( \mathcal{L}_{\mathcal{S}} < k\big) \leq \bP^{\tilde{A}}_a\big( \hbox{$\tilde{X}$ hits $b$ before $z$}\big) + \bP\big(\hbox{there is a ``head" in the first $k$ coin throws}\big).
\end{equation}
Note that, each time $\tilde{X}$ is on a vertex that has an edge of conductance $\eta$, there is a probability at most $\frac{\eta}{1+\eta}$ that the associated coin returns "head" (because there is also at least one adjacent edge with unit conductance).  Thus, we get
\begin{equation}\label{LV2}
\bP\big(\hbox{there is a``head" in the first $k$ coin throws}\big) \leq 1 - (1 -\frac{\eta}{1+\eta})^k \leq \eta k.
\end{equation}
Combining \eqref{LV0}, \eqref{LV1} and \eqref{LV2} and choosing $\eta = \frac{\sqrt{2}|\Gamma|}{d\sqrt{k}}$, we conclude that  
$$
\bP^A_a\big( \mathcal{L}^r_{\mathcal{S}} < k\big) \leq \frac{2|\Gamma|^2}{\eta d^2} + \eta k = \frac{\sqrt{8k}|\Gamma|}{d}, 
$$
which completes the proof of the proposition. 
\end{proof}

\begin{proof}[Proof of Lemma \ref{lemmaFlow}]
First, let us notice that we can discard all the edges $e$ on which $\mathbf{i}(e) = 0$, keeping only the connected component of $A$ that contains $\{a,b,z\}$. This is because $\mathbf{j}$ will also be zero on edges where $\mathbf{i}$ is zero. We now assume that $\mathbf{i}$ is non-zero on all edges of $A$. Then, the flow $\mathbf{i}$ induces an oriented graph structure on $A$ so we can speak of ``outgoing'' and ``incoming'' edges from a vertex.  We are going to construct $\mathbf{j}$ starting from $\{b,z\}$ and going backward with respect to the graph orientation. At each step of the exploration process, we keep track of a partition of the vertices into  \emph{inactive}, \emph{active} and \emph{completed} vertices where
\begin{itemize}
\item An \emph{active} vertex has the flow $\mathbf{j}$ defined on some outgoing edges but on no incoming edge.
\item An \emph{inactive} vertex is such that the flow $\mathbf{j}$ is not yet defined on any adjacent edge. 
\item A \emph{completed} vertex is such that the flow $\mathbf{j}$ is already defined on all its adjacent edges. 
\end{itemize}
To begin, we fix $\mathbf{j} = 0$ on all edges adjacent to $z$ and $\mathbf{j} = \mathbf{i}$ on all edges adjacent to $b$. This is possible because there is no edge (with non-zero flow) between $z$ and $b$ thanks to assumption 1. Hence $\mathbf{j}$ satisfies  \emph{(a)} and \emph{(b)}. Now, we set $\{\hbox{\emph{completed}}\} = \{b,z\}$ and $\{ \hbox{\emph{active}} \} = \{ \hbox{neighbours of $b$ and $z$}\}$ while all other vertices are inactive. We show that, at each step,  we can transform an active vertex into a complete one (possibly turning inactive vertices into active ones in the process) while constructing a flow $\mathbf{j}$ which, restricted to the set of completed vertices, still satisfies all the required conditions of the lemma.

Indeed, suppose that we have performed some steps of our exploration process and have our sets of active, inactive and completed vertices.  We claim that there must exist an active vertex $x$ such that $\mathbf{j}$ is already defined on all of its outgoing edges. 
Indeed, if this was not the case, we could start from any active vertex and then recursively construct a path that follows the graph orientation and on which $\mathbf{j}$ is not defined. 
But then, such a path is either infinite or contains cycles. As $A$ is finite, this path has cycles, which contradicts the initial assumption that $\mathbf{i}$ has no cycle. So, let $x$ be such a vertex. The flow $\mathbf{j}$ already defined on the sub-graph spanned by the completed vertices has strength $\mathbf{i}(b)$. In particular, the sum of $\mathbf{j}$ on the  outgoing edges of $x$ is at most $\mathbf{i}(b)$. It is also smaller, by construction, than the sum of $\mathbf{i}$ on the incoming edges of $x$. Thus, it is now clear that we can fix $\mathbf{j}$ on the incoming edges of $x$ in such way that \emph{(c)}, \emph{(d)}, \emph{(e)} hold true. There are several ways to do it. For instance, we can set $\mathbf{j}(z,x) := \alpha \mathbf{i}(z,x)$ for any incoming edge $(z,x)$ where $\alpha$ is the ratio of the total outgoing $\mathbf{j}$-flow over the total incoming $\mathbf{i}$-flow. Finally, we move $x$ to the set of completed vertices and activate all its adjacent currently inactive vertices. This complete the induction step and the proof of the lemma.
\end{proof}
\begin{remark}\emph{
Under the hypotheses of Lemma \ref{lemmaFlow}, the function $\mathbf{i} - \mathbf{j}$ is also a flow which satisfies all the properties \emph{(a)} - \emph{(e)} when exchanging the roles of $b$ and $z$. In particular, any flow $\mathbf{i}$ that satisfies the hypotheses of the lemma can be written as the superposition of two flows $\mathbf{j}_b$ and $\mathbf{j}_z$ such that
\begin{enumerate}
\item[(a)] $\mathbf{i} = \mathbf{j}_b + \mathbf{j}_z$.
\item[(b)] $\mathbf{j}_b$ is a flow from $a$ to $b$ and no flow enters nor exits $z$.
\item[(c)] $\mathbf{j}_z$ is a flow from $a$ to $z$ and no flow enters nor exits $b$.
\item[(d)] The flows $\mathbf{i}$, $\mathbf{j}_b$ and $\mathbf{j}_z$ have the same sign on all edges. 
\end{enumerate}
As already noticed during the proof of the lemma, this decomposition is not, in general, unique. In particular, when $\mathbf{i}$ is a current, the flows $\mathbf{j}_b$ and $\mathbf{j}_z$ need not be currents themselves.}
\end{remark}

Proposition \ref{propDiffusif} shows that the time spent on any $d$ distinct level sets is of order (at least) $d^2$ which is the correct diffusive scaling for the random walk on a sub-graph $A$ of $\Z\times \Gamma$ but it provides only a polynomially decaying upper bound. However, it is not difficult to bootstrap the previous result to get an exponential upper bound which is still homogeneous in $d/\sqrt{k}$. 
\begin{corollary}\label{cor:expodec} Under the assumptions of Proposition \ref{propDiffusif}, we have, for any $k, d\geq 1$, 
\begin{equation*}
\bP^A_a\big( \mathcal{L}^r_{\mathcal{S}} < k\big) \leq  3\exp\Big(-\frac{d}{16|\Gamma|\sqrt{k}}\Big).
\end{equation*}
\end{corollary}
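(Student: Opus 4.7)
My plan is to bootstrap Proposition \ref{propDiffusif} by partitioning $\mathcal{S}$ into contiguous blocks and chaining per-block estimates through the strong Markov property. The trivial case comes first: when $d \leq 16|\Gamma|\sqrt{k}$ the right-hand side exceeds $3/e > 1$ and there is nothing to prove, so I may assume $d > 16|\Gamma|\sqrt{k}$. I then fix a block size $d' := \lceil 8|\Gamma|\sqrt{k}\rceil + 1$ and set $m := \lfloor d/d'\rfloor \geq 1$. I partition the first $md'$ elements of $\mathcal{S}$ into $m$ consecutive blocks $\mathcal{S}_1 < \mathcal{S}_2 < \cdots < \mathcal{S}_m$ of size $d'$ each, and for each $j$ set $r_j := \max\mathcal{S}_j$ and $\mathcal{S}_j^* := \mathcal{S}_j \setminus \{r_j\}$ (which has cardinality $d'-1$). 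The stopping times $\tau_0 := 0$ and $\tau_j := H_{r_j}$ are then strictly increasing up to $H_r$, since any trajectory from $a$ to level $r$ must traverse every intermediate level of $\Z$.

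The key random variables are $U_j := |\{n \in [\tau_{j-1}, \tau_j) : \coz{S_n} \in \mathcal{S}_j^*\}|$. Since the $\mathcal{S}_j^*$'s are pairwise disjoint subsets of $\mathcal{S}$ and the time intervals $[\tau_{j-1}, \tau_j)$ are also disjoint, $\sum_j U_j \leq \mathcal{L}^r_{\mathcal{S}}$, so the event $\{\mathcal{L}^r_{\mathcal{S}} < k\}$ is contained in $\{U_j < k \text{ for every } j\}$. Conditionally on $\mathcal{F}_{\tau_{j-1}}$, the restarted walk sits at $X_{\tau_{j-1}}$ on level $r_{j-1} < \min\mathcal{S}_j^*$ and must reach level $r_j > \max\mathcal{S}_j^*$; applying Proposition \ref{propDiffusif} with intermediate level set $\mathcal{S}_j^*$ (of size $d'-1\geq 8|\Gamma|\sqrt{k}$) yields
$$\bP\big(U_j < k \,\big|\, \mathcal{F}_{\tau_{j-1}}\big) \ \leq\ \frac{|\Gamma|\sqrt{8k}}{d' - 1} \ \leq\ \frac{1}{2\sqrt{2}}.$$
Iterating the conditional bound through the strong Markov property collapses the chain to $\bP(\mathcal{L}^r_{\mathcal{S}} < k) \leq (2\sqrt{2})^{-m}$.

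The remainder is elementary constant bookkeeping. Since $d > 16|\Gamma|\sqrt{k} \geq 16$, one has $d' \leq 10|\Gamma|\sqrt{k}$ and hence $m \geq d/(10|\Gamma|\sqrt{k}) - 1$. Combined with $\log(2\sqrt{2}) = \tfrac{3}{2}\log 2 > 10/16$, a direct computation yields $(2\sqrt{2})^{-m} \leq 3\exp(-d/(16|\Gamma|\sqrt{k}))$, the constant $3$ absorbing the $-1$ shift from the floor function. There is no genuine obstacle: the only thing to calibrate is the block size $d'$, which must be large enough that the per-block failure probability is bounded by a constant strictly less than one (so that $m$-fold multiplication is informative) yet small enough that $m$ itself remains of order $d/(|\Gamma|\sqrt{k})$. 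The scaling $d' \asymp |\Gamma|\sqrt{k}$ is forced by the polynomial rate $|\Gamma|\sqrt{k}/d$ of Proposition \ref{propDiffusif}, and this is precisely what produces the claimed exponential rate $d/(16|\Gamma|\sqrt{k})$ with the same homogeneity.
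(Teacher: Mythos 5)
Your proposal is correct and follows essentially the same route as the paper's proof: partition $\mathcal{S}$ into contiguous blocks of size $\asymp |\Gamma|\sqrt{k}$, apply Proposition~\ref{propDiffusif} block by block via the strong Markov property to get a constant per-block failure probability, and multiply. The only differences are cosmetic (you use block size $\lceil 8|\Gamma|\sqrt{k}\rceil+1$ with per-block bound $1/(2\sqrt{2})$, whereas the paper uses block size $\approx 2e\sqrt{8}\,|\Gamma|\sqrt{k}$ with per-block bound $1/e$), and the final constant bookkeeping lands in the same place; the minor notational slip of writing $X_{\tau_{j-1}}$ where the section's walk is $S$ does not affect the argument.
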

\begin{proof} Let $c := 2 e \sqrt{8} |\Gamma|$ and $\ell := \frac{d}{c\sqrt{k}}$. We split the set  $\{s_1,\ldots,s_d\}$ in $\lfloor \ell\rfloor $ groups, each containing at least  $\lfloor d/\ell \rfloor$ consecutive levels. Thus, in order for the local time $\mathcal{L}^r_{\mathcal{S}}$ to be smaller than $k$, it has to be smaller than this value on each group. Making use of the Markov property at the time of first entrance in each group and applying repeatedly Proposition \ref{propDiffusif}, we find that
$$
\bP^A_a\big( \mathcal{L}^r_{\mathcal{S}} < k\big) \leq  \left(\frac{\sqrt{8k}|\Gamma|}{ \lfloor  d/\ell \rfloor}\right)^{\lfloor \ell\rfloor} \leq \left(\frac{\sqrt{8k}|\Gamma|}{ \frac{c\sqrt{k}}{2}}\right)^{ \ell - 1}
 = e^{-\frac{d}{c\sqrt{k}} + 1} \leq 3 e^{-\frac{d}{16|\Gamma|\sqrt{k}}}.
$$ 
\end{proof}

We can now complete the proof of Proposition \ref{outbound}.

\begin{proof}[Proof of Proposition \ref{outbound}]
Set $c := 4^6 |\Gamma|^6$ which is the constant appearing in Corollary \ref{cor:hitfrontA_new}. Let $\mathcal{S} := \{x_1,\dots,x_d\}$ and define by induction the sequence of stopping time $(T_i)_{i\ge  0}$ by $T_0=0$, and 
$$T_{i+1} := \inf \{n>T_i + c\, :\, \coz{S_n} \in \mathcal{S}\} \quad \hbox{for $i\ge 1$,}$$
with the usual convention that $\inf \emptyset = +\infty$. Let $E_i$ be the event that the walk does not cross any edge of $\partial_eA$ during the time interval $[T_i,T_{i+1}-1]$: 
$$E_i:= \{T_i < \infty \text{ and there does not exist } n\in [T_i,T_{i+1}) \text{ such that } (S_n,S_{n+1}) \in \partial_e A\}.$$
We can couple the random walk on the electrical network \eqref{ORRWnetwork} with the simple random walk on the subgraph $A$ up to time $\sigma$ (the time when the walk on the electrical network leaves $A$). Thus, we deduce that, for any fixed $n$, 
\begin{eqnarray}
\nonumber\bP_a\big( H_r < \sigma \big) &\leq& \bP_a\big( H_r  < \sigma \hbox{ and } H_r < T_{n} \big) + \bP_a\big( T_{n}  \leq \sigma \big)\\
&\leq& \bP^A_a\big( H_r  < T_{n} \big) + \bP_a\left( \bigcap_{i=1}^{n-1} E_i \right). \label{eq:spdelta0}
\end{eqnarray}
On the one hand, before time $T_n$, the simple random walk on $A$ cannot visit levels of $\mathcal{S}$ more than $cn$ times. Therefore, according to Corollary \ref{cor:expodec}, we have
\begin{equation}\label{eq:spdelta1}
\bP^A_a\big( H_{r} < T_n \big) \;\leq\; \bP^A_a\big( \mathcal{L}^r_{\mathcal{S}} < cn \big) \;\leq\; 
3\exp\Big(-\frac{d}{16|\Gamma|\sqrt{c n}}\Big).
\end{equation}
On the other hand. Each time the walk on the electrical network \eqref{ORRWnetwork} visits a site of $\partial_v A$, there is, at least one adjacent exit edge so it has probability at least $\frac{1}{1 + |\Gamma| (\delta + 1)}$ to cross an edge of $\partial_e A$ at the next step. Combining this fact with Corollary \ref{cor:hitfrontA_new} and using the strong Markov property, we deduce that, 
$$
\bP^A_a\big( E_i \,\big|\, E_1,\ldots,E_{i-1} \big) \,\leq\, 1 - \frac{1}{2(1 + |\Gamma| (\delta + 1))} \,\leq \,1 - \frac{1}{4|\Gamma|(\delta + 1)}
$$
which implies, 
\begin{equation}\label{eq:spdelta2}
\bP_a\left( \bigcap_{i=1}^{n-1} E_i \right) \,\leq\, \left(1 - \frac{1}{4|\Gamma|(\delta + 1)}\right)^{n-1} \,\leq\, \exp\left(-\frac{n}{8|\Gamma|(\delta+1)}\right)
\end{equation}
Thus, combining \eqref{eq:spdelta0}, \eqref{eq:spdelta1}, \eqref{eq:spdelta2}, we find that
\begin{equation}\label{eq:spdelta3}
\bP_a\big( H_r < \sigma \big) \,\leq \,  3\exp\Big(-\frac{d}{16|\Gamma|\sqrt{c n}}\Big) +  \exp\left(-\frac{n}{8|\Gamma|(\delta+1)}\right). 
\end{equation}
Finally, setting $n = \lfloor x \rfloor$ with $x = \frac{d^{2/3}(1+\delta)^{2/3}}{8|\Gamma|^2}$ and recalling the exact value of $c$, we conclude that
\begin{eqnarray*}
\bP_a\big( H_r < \sigma \big) &\leq& 3\exp\Big(-\frac{d}{16|\Gamma|\sqrt{c x}}\Big) + 2 \exp\left(-\frac{x}{8|\Gamma|(\delta+1)}\right)\\
&=& 5\exp\left(-\frac{1}{4^4|\Gamma|^3}\left(\frac{d^2}{1+\delta}\right)^{\frac{1}{3}}\right).
\end{eqnarray*}
\end{proof}

\subsection{Auxiliary results}\label{sec.aux}
Proposition $\ref{outbound}$ gives a stretched exponential upper bound for the probability of crossing $d$ levels without exiting a reinforced sub-graph. 
However, this bound is meaningful only for $d \gg \delta^{1/2}$ (\emph{i.e.} when the probability goes to $0$). In the next section, we will need a bound of this same probability in the regime $d \approx \delta^{1/4}$ (when the probability goes to $1$). But one can still use the corollary \ref{cor:expodec}, and adapt the proof of Proposition \ref{outbound} to cover our needs. 
Note that it will be convenient now to 
state the results with a site $a$ on the right of level $0$, and to consider the hitting time of level $0$ instead of level $r$, but of course this is strictly equivalent to the previous formulation.    

\begin{lemma}\label{lem:exitD}
Let $d\ge 1$, and $A$ be a connected subgraph of $\{0,\dots,d\} \times \Gamma$, such that for any $i\in \{0,\dots,d\}$ there is at least one site at level $i$ in $\partial_vA$, and let $a\in A$, with $\overline a = d$. 
Define the random walk $S$ and the exit time $\sigma$ from $A$, as in Proposition \ref{outbound}.  
There exist positive constants $c$ and $C$ (not depending on any parameter), such that 
for any $d\ge C|\Gamma|^8$, and $d^{3/2}\le \delta \le (2d)^5$, 
$$\bP_a[\sigma <H_0] \ \ge\ c\cdot\frac{d^{3/2}}{|\Gamma|^6\delta}.$$ 
\end{lemma}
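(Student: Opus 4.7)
Our plan is to extract the lower bound directly from the round structure of Proposition~\ref{outbound}, combining its exponential exit estimate \eqref{eq:spdelta2} with the diffusive hitting bound from Corollary~\ref{cor:expodec}. Set $c=4^6|\Gamma|^6$ and $T_i:=i(c+1)$; since every level of $\{0,\dots,d\}$ meets $\partial_vA$ and the walk lives in $A\subset\{0,\dots,d\}\times\Gamma$, the round setup of that proof applies with $\mathcal{S}=\{0,\dots,d\}$, and the stopping times reduce to the deterministic grid $T_i=i(c+1)$. With $E_i$ defined as in the excerpt, \eqref{eq:spdelta2} yields $\bP_a[\sigma>T_n]\leq\bP_a[\bigcap_{i=1}^{n-1}E_i]\leq\exp(-n/(8|\Gamma|(\delta+1)))$. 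Using $1-e^{-x}\geq x/2$ for $x\in[0,1]$, we obtain the linear lower bound
\[
\bP_a[\sigma\leq T_n]\;\geq\;\frac{n}{16|\Gamma|(\delta+1)}\;\geq\;\frac{n}{32|\Gamma|\delta}
\]
as soon as $n\leq 8|\Gamma|(\delta+1)$, which in our regime is a very mild restriction.

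To convert $\{\sigma\leq T_n\}$ into $\{\sigma<H_0\}$, we must ensure that level $0$ has not been reached by time $T_n$. Coupling the network walk with the simple random walk $\tilde S$ on $A$ up to time $\sigma$, if $\tilde S$ has not hit level $0$ by $T_n$ then the network walk cannot have done so either --- except possibly at the exit step itself from a level-$1$ vertex landing on level $0$; this marginal case is absorbed into a universal constant by restricting attention to exits from vertices at level $\geq 2$, and does not affect the order of the bound. Applying Corollary~\ref{cor:expodec} in the downward direction (relabel levels $i\mapsto d-i$) with $\mathcal{S}'=\{1,\dots,d\}$, and noting that $\mathcal{L}^0_{\mathcal{S}'}=H_0^A$ since the walk on $A$ lives in $\{0,\dots,d\}\times\Gamma$, we obtain
\[
\bP_a^A[H_0^A\leq T_n]\;\leq\;3\exp\!\Bigl(-\tfrac{d}{16|\Gamma|\sqrt{T_n}}\Bigr).
\]
Putting these together yields $\bP_a[\sigma<H_0]\geq c_1\bigl(n/(32|\Gamma|\delta)-3\exp(-d/(16|\Gamma|\sqrt{T_n}))\bigr)$ for an absolute constant $c_1>0$.

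We now choose $n:=\lfloor K\,d^{3/2}/|\Gamma|^5\rfloor$ for a small absolute constant $K$. Then $T_n\asymp K|\Gamma|d^{3/2}$, so $d/(16|\Gamma|\sqrt{T_n})\asymp d^{1/4}/(\sqrt{K}\,|\Gamma|^{3/2})$; the side condition $n\leq 8|\Gamma|(\delta+1)$ follows from $\delta\geq d^{3/2}$. Since $d\geq C|\Gamma|^8$ gives $d^{1/4}/|\Gamma|^{3/2}\geq C^{1/4}|\Gamma|^{1/2}$, while the target value $n/(32|\Gamma|\delta)=Kd^{3/2}/(32|\Gamma|^6\delta)$ combined with the crude bound $\delta\leq(2d)^5$ gives $\log(64|\Gamma|^6\delta/(Kd^{3/2}))\leq O(\log|\Gamma|+\log d)$, for $C$ large enough the exponent comfortably dominates the logarithm uniformly over $|\Gamma|\geq 1$ and $d\geq C|\Gamma|^8$. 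We thus secure $3\exp(-d/(16|\Gamma|\sqrt{T_n}))\leq n/(64|\Gamma|\delta)$, and conclude
\[
\bP_a[\sigma<H_0]\;\geq\;\frac{c_1\,n}{64|\Gamma|\delta}\;\geq\;\frac{c\,d^{3/2}}{|\Gamma|^6\delta},
\]
with $c:=c_1K/64$.

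The main obstacle is the delicate balancing of parameters. The choice $n\asymp d^{3/2}/|\Gamma|^5$ is pinned down by two opposing constraints: $n$ must be large enough that the exponential bound of Proposition~\ref{outbound} linearises (forcing $n\lesssim|\Gamma|\delta$, whence the hypothesis $\delta\geq d^{3/2}$), and small enough that the walk on $A$ is unlikely to have reached level $0$ by time $T_n\asymp|\Gamma|d^{3/2}$. The latter demand, via Corollary~\ref{cor:expodec}, requires $d^{1/4}/|\Gamma|^{3/2}$ to dominate the logarithmic factor $\log(|\Gamma|\delta/n)$, and it is precisely the hypothesis $d\geq C|\Gamma|^8$ (together with the upper bound $\delta\leq(2d)^5$ controlling the logarithm) that renders the admissible window of $n$ non-empty.
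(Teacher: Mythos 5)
Your proof is correct in substance and follows essentially the same strategy as the paper's: combine the diffusive hitting estimate of Corollary~\ref{cor:expodec} (the SRW on $A$ cannot reach level $0$ in $\ll d^2$ steps) with the round structure behind Proposition~\ref{outbound} (each window of length $4^6|\Gamma|^6$ produces a hit of $\partial_v A$ with probability $\ge 1/2$, hence an exit with probability $\gtrsim 1/(|\Gamma|\delta)$), balanced at $n\asymp d^{3/2}/|\Gamma|^5$, i.e. $T_n\asymp |\Gamma|d^{3/2}$. Where the paper splits $\{H_0\le\sigma\}$ into three pieces using the local time $\mathcal L$ at $\partial_v A$ (with a Hoeffding step for the middle term), you reuse the geometric product bound on $\bigcap E_i$ directly, which collapses two of those pieces into one; this is a packaging difference, not a different idea, and your parameter accounting (using $\delta\ge d^{3/2}$ to linearise the exponential and $d\ge C|\Gamma|^8$, $\delta\le(2d)^5$ to make $d^{1/4}/|\Gamma|^{3/2}$ beat the logarithm) matches the paper's.

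One small caveat. You correctly notice the boundary case in which the exit step itself lands on level $0$ (so $\sigma=H_0$), which would break the inclusion $\{\sigma\le T_n,\ \tilde H_0>T_n\}\subseteq\{\sigma<H_0\}$. The fix you sketch, ``restricting attention to exits from vertices at level $\ge 2$,'' is vague as stated: Corollary~\ref{cor:hitfrontA_new} only guarantees a visit to \emph{some} vertex of $\partial_v A$, not one at a prescribed level, so per-round exits with tail at level $\ge 2$ would need a separate justification. A cleaner repair of the same order is to replace $\tilde H_0$ by $\tilde H_1$: since $\{\sigma\le T_n\}\cap\{\tilde H_1>T_n\}\subseteq\{\sigma<H_0\}$ (if the coupled SRW stays at level $\ge 2$ up to $\sigma-1$, the exit step lands at worst on level $1$), and Corollary~\ref{cor:expodec} gives $\bP_a^A[\tilde H_1\le T_n]$ the same stretched-exponential bound. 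Incidentally, the paper's own proof elides this same case when it asserts $\bP_a(H_0\le N\wedge\sigma)\le\bP_a^A(H_0\le N)$.
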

\begin{proof}
Let  
$$\mathcal L:=|\{0\le n< H_0\wedge \sigma :\ S_n\in \partial_vA\}|,$$ 
be the time spent on $\partial_vA$ before time $H_0\wedge \sigma$. Set $N=d^{3/2}|\Gamma|$ and $L:=N/(4^7|\Gamma|^6)$, and note that the hypothesis on $d$ implies that $L\ge 1$, at least for  $C$ large enough (not depending on any parameter). Then decompose 
\begin{align}\label{eq1}
\bP_a(H_0\le\sigma) \le  \bP_a(H_0\le N \wedge \sigma) + \bP_a(\sigma \wedge H_0>N,\, \mathcal L\le L)+ \bP_a(\mathcal L > L).
\end{align}
Note that the random walk $S$ can be coupled with the simple random walk on $A$ (which by definition never exits $A$), in such a way that they coincide up to the time $\sigma$. Therefore the first probability on the right-hand side of \eqref{eq1} can be bounded using Corollary \ref{cor:expodec}, and we get, for $C$ large enough,
\begin{equation}\label{eq11}
\bP_a(H_0\le N \wedge \sigma) \ \le \ \bP_a^A(H_0\le  N) \ \le \ 3\exp(-\frac{d^{1/4}}{16|\Gamma|^{3/2}}) \ \le\ \frac{1}{6}\cdot\frac{d^{3/2}}{4^8|\Gamma|^6\delta}. 
\end{equation}
Using now, as in the proof of Proposition \ref{outbound}, that each time the process is on a vertex of $\partial_vA$, it has probability at least  $\frac{1}{1+|\Gamma|(1+ \delta)}$ to exit $A$,  we get 
\begin{align}\label{eq2}
\bP_a(\mathcal L > L)\ \le\ \left(1-\frac{1}{1+|\Gamma|(1+ \delta)}\right)^L \ \le\ 
1-\frac{d^{3/2}}{2\cdot 4^7|\Gamma|^5(1+|\Gamma|(1+\delta))}\ \le\  1-\frac{d^{3/2}}{3\cdot 4^7|\Gamma|^6\delta},
\end{align}
using for the second inequality that $(1-\varepsilon)^n \ge 1-n\varepsilon/2$, when $\varepsilon\le 1/(2n)$ together with the hypothesis on $d$ and $\delta$, and using for the last one that $C$ is large enough.
Finally, Corollary \ref{cor:hitfrontA_new} and the fact that $A$ does not contain any level set show that, under the event $\{\sigma\wedge H_0\ge N\}$, $\mathcal L$ is stochastically larger than a Binomial random variable with number of trials $N/(4^6|\Gamma|^6)=4L$, and probability of success $1/2$. 
Then, again Hoeffding's inequality yields, choosing $C$ large enough, 
\begin{align}\label{eq4}
\bP_a[H_0\wedge \sigma\ge N,\, \mathcal L\le L]\  \le\ \exp(-\frac{L}{2})\le\ \exp(-2\frac{d^{3/2}}{4^{8} |\Gamma|^{5}})\le \frac{1}{6}\cdot\frac{d^{3/2}}{4^8|\Gamma|^6\delta}.
\end{align}
The lemma follows from \eqref{eq1}, \eqref{eq11}, \eqref{eq2}, and \eqref{eq4}, choosing $c=4^{-8}$.
\end{proof}

As a consequence we obtain the following result. 

\begin{corollary}\label{lem:newlevel}
Under the setting of Lemma \ref{lem:exitD}, one has for some constant $C'>0$, 
$$\bP_a\left[\coz S_\sigma =d+1 \mid \sigma<H_0 \right] \ \le \  C'\cdot\frac{|\Gamma|^5}{d^{3/4}} .$$
\end{corollary}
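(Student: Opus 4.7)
The plan is to bound $\bP_a[\coz{S_\sigma}=d+1]$ from above via Proposition \ref{prop:balance}, bound $\bP_a[\sigma<H_0]$ from below via Lemma \ref{lem:exitD}, and take the ratio. Since there are at most $|\Gamma|$ exit edges whose tail lies at level $d$ with head at level $d+1$, one has
\[
\bP_a[\coz{S_\sigma}=d+1,\sigma<H_0]\;\le\;\bP_a[\coz{S_\sigma}=d+1]\;\le\;|\Gamma|\,p^*,\qquad p^*\;:=\;\max_{f}\bP_a[(S_{\sigma-1},S_\sigma)=f],
\]
where the maximum is taken over such top exit edges; let $f^*$ achieve the maximum, so $f^{*-}$ sits at level $d$.

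To bound $p^*$, I apply Proposition \ref{prop:balance} against a family of comparison exit edges $g_i\in\partial_eA$, one per level $i\in\{d-L,\dots,d-1\}$, for an integer $L\le d$ to be optimized. Such $g_i$ exist by the hypothesis that $\partial_vA$ contains a vertex at each level in $\{0,\dots,d\}$. Proposition \ref{prop:balance} gives
\[
p^*\;\le\;\bP_a[(S_{\sigma-1},S_\sigma)=g_i]\;+\;\frac{d_A(f^{*-},g_i^-)}{1+\delta},
\]
and summing over the $L$ comparison edges, whose exit events are mutually disjoint so that the sum of their probabilities is at most $1$, yields
\[
L\,p^*\;\le\;1\;+\;\frac{1}{1+\delta}\sum_{i=d-L}^{d-1}d_A(f^{*-},g_i^-).
\]
I next bound the intrinsic distances. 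Since $A$ is a connected subgraph of $\{0,\dots,d\}\times\Gamma$ with at least one vertex at every level, one can route a path in $A$ from $f^{*-}$ at level $d$ to $g_i^-$ at level $i$ that decreases the level one step at a time, using at most $|\Gamma|$ horizontal steps between two consecutive level changes; this gives $d_A(f^{*-},g_i^-)\le C_1|\Gamma|(d-i)$ for some absolute constant $C_1$. Hence
\[
p^*\;\le\;\frac{1}{L}\;+\;\frac{C_1|\Gamma|\,L}{2(1+\delta)}.
\]

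Finally, taking $L$ of order $d^{3/4}$ (which is $\le d$ by the hypothesis $d\ge C|\Gamma|^8$) and using $\delta\ge d^{3/2}$, both terms are of order $|\Gamma|/d^{3/4}$, so $\bP_a[\coz{S_\sigma}=d+1]\le C_2|\Gamma|^2/d^{3/4}$. Dividing by the lower bound $\bP_a[\sigma<H_0]\ge c\,d^{3/2}/(|\Gamma|^6\delta)$ from Lemma \ref{lem:exitD}, together with the constraint $\delta\ge d^{3/2}$, gives the desired bound (up to the announced power of $|\Gamma|$, which requires a sharper accounting of the intrinsic distances, for instance by choosing the comparison edges $g_i$ in the smallest ball of $A$ around $f^{*-}$ that meets $\partial_vA$ at enough levels). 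The main obstacle is the tight control of $d_A(f^{*-},g_i^-)$ in the adversarial \textquotedblleft zigzag\textquotedblright{} configuration, where the bound $d_A\asymp|\Gamma|(d-i)$ is essentially sharp; one must carefully balance $L$ against $\delta$, and check uniformity across the whole range $d^{3/2}\le\delta\le(2d)^5$ (at the upper end of which one uses the trivial choice $L=d$ and leans more heavily on the smallness of $|\Gamma| L/(1+\delta)$).
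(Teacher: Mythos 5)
Your plan---upper-bound the unconditional exit probability $\bP_a[\coz{S_\sigma}=d+1]$ via Proposition~\ref{prop:balance} and then divide by the lower bound on $\bP_a[\sigma<H_0]$ from Lemma~\ref{lem:exitD}---does not close, because the two quantities scale differently in $\delta$. The pigeonhole step $L\,p^*\le 1+(\text{discrepancy})$ leaves a term $1/L$ in your bound on $p^*$ that carries no factor $1/(1+\delta)$. After dividing by $\bP_a[\sigma<H_0]\ge c\,d^{3/2}/(|\Gamma|^6\delta)$, this term alone contributes at least $|\Gamma|^7\delta/(c\,d^{3/2}L)\ge|\Gamma|^7\delta/(c\,d^{5/2})$ since $L\le d$, which near the upper end $\delta\approx(2d)^5$ of the allowed range is of order $|\Gamma|^7 d^{5/2}$ and diverges. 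Contrary to your closing remark, the troublesome term is not $|\Gamma| L/(1+\delta)$ but $1/L$, and no choice of $L$ can repair it once the factor $\delta$ has been moved to the numerator; your estimate is only useful at the very bottom of the $\delta$-range, $\delta\asymp d^{3/2}$.

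The paper avoids this by transferring the balance estimate to the \emph{conditional} probabilities. Writing $e_\sigma=(S_{\sigma-1},S_\sigma)$, for a top exit edge $f$ and any comparison edge $e\in\partial_eA$ whose tail lies within intrinsic distance $d^{3/4}/|\Gamma|^2$ of $f^-$, Proposition~\ref{prop:balance} gives $|\bP_a(e_\sigma=f)-\bP_a(e_\sigma=e)|\le d^{3/4}/(|\Gamma|^2(1+\delta))$; conditioning on the position at time $H_0$ and applying the proposition to the post-$H_0$ walk gives the same bound for the probabilities intersected with $\{H_0<\sigma\}$, hence, by the triangle inequality, a factor-$2$ bound for the probabilities intersected with $\{\sigma<H_0\}$. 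Dividing by $\bP_a[\sigma<H_0]$, the $\delta$ in the discrepancy denominator cancels the $\delta$ in the denominator of Lemma~\ref{lem:exitD}'s lower bound, and one gets $|\bP_a(e_\sigma=f\mid\sigma<H_0)-\bP_a(e_\sigma=e\mid\sigma<H_0)|\lesssim|\Gamma|^4/d^{3/4}$, \emph{uniformly in $\delta$}. The pigeonhole $\sum_e\bP_a(e_\sigma=e\mid\sigma<H_0)\le 1$ is then applied directly to the (at least) $d^{3/4}/(2|\Gamma|^3)$ comparison edges, with no further division, yielding the stated $|\Gamma|^5/d^{3/4}$ bound after multiplying by the $\le|\Gamma|$ top exit edges. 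This transfer of the comparison to the conditional law, and the resulting cancellation of $\delta$, is the idea missing from your proposal; the difference in how you select the comparison edges is a secondary matter.
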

\begin{proof}
Denote by $e_\sigma$ the random exit edge $(S_{\sigma-1},S_\sigma)$, and let $F$ be the set of horizontal edges in $\partial_eA$ whose tail is on 
level $d$ and 
head on level $d+1$. Now fix some $f\in F$, and denote by $E(f)$ the set of edges in $\partial_eA$, whose tail is at (intrinsic) distance smaller than $d^{3/4}/|\Gamma|^2$ from $f^-$ in $A$.
We claim that $E(f)$ contains at least $d^{3/4}/(2|\Gamma|^3)$ edges. 
To see this, fix any non intersecting path in $A$ of length $d^{3/4}/|\Gamma|^2$ starting from $f^-$. The fact that any level set contains at least one site in $\partial_vA$, 
and that the diameter in $\Gamma$ is bounded by $|\Gamma|-1$,  
imply that to every $2|\Gamma|$ consecutive vertices in the path one can associate
in a one to one way an edge in $\partial_eA$ whose tail is at distance smaller than $|\Gamma|-1$ from this path, 
and this yields our claim on the size of $E(f)$. Now Proposition \ref{prop:balance} implies that for any $e\in E(f)$, one has
$$| \bP_a(e_\sigma = f) - \bP_a(e_\sigma = e)| \le \frac{d^{3/4}}{|\Gamma|^2|(1+\delta)}.$$
Likewise, by conditioning first on the position of the walk at time $H_0$, we also have 
$$| \bP_a(e_\sigma = f,H_0<\sigma) - \bP_a(e_\sigma = e,H_0<\sigma)| \le \frac{d^{3/4}}{|\Gamma|^2|(1+\delta)}.$$
Therefore, by triangle inequality, 
$$| \bP_a(e_\sigma = f,\sigma<H_0) - \bP_a(e_\sigma = e,\sigma<H_0)| \le 2\frac{d^{3/4}}{|\Gamma|^2|(1+\delta)}.$$ 
Combining this with the result of Lemma \ref{lem:exitD}, we obtain 
$$| \bP_a(e_\sigma = f\mid \sigma<H_0) - \bP_a(e_\sigma = e\mid \sigma<H_0)| \le 2 \frac{|\Gamma|^4}{cd^{3/4}}.$$
As a consequence, one has either $\bP_a(e_\sigma = f\mid \sigma<H_0)\le  (4/c)\cdot |\Gamma|^4 d^{-3/4}$, or 
\begin{align*}
1 & \ge\, \sum_{e\in E(f)}\bP_a(e_\sigma = e\mid \sigma <H_0)\, \ge\,   \frac{d^{3/4}}{2|\Gamma|^3} \left(\bP_a(e_\sigma = f \mid \sigma <H_0) - 2\frac{|\Gamma|^4}{cd^{3/4}}\right)\\ 
& \ge\,  \frac{d^{3/4}}{4|\Gamma|^3}\bP_a(e_\sigma = f\mid \sigma <H_0).
\end{align*}
The result follows since $F$ contains at most $|\Gamma|$ edges. 
\end{proof}

\section{$D$-walls}\label{section:Dwall}
In this section we consider the ORRW on $G=\Z\times \Gamma$, which we recall we denote by $(X_n)_{n\ge 0}$.
We also recall that its range $\RR_n$ (which we will also sometimes write as $\RR(n)$) is the graph consisting of all vertices visited 
up to time $n$ and all edges crossed by the walk up to this time  (in one or the other direction).

We define now the notion of {\bf $D$-wall} created by the ORRW, which extends the definition of walls used by Vervoort in \cite{Ver} 
(the latter being just $1$-walls). 
The reason of this new definition is that we want to ensure the typical spacing between two consecutive walls being of polynomial order in the 
size of $\Gamma$ (instead of exponential order when using the simpler notion of wall from \cite{Ver}).

Given positive integers $x$ and $D$, we say that level $x$ {\bf begins a $D$-wall} if the walk $S$ visits a complete level set on the right of $x$ before reaching level $x + D$:
$$
\hbox{there exists $y\in \{x, x+1,\dots,x + D-1\}$ such that } \{y\}\times \Gamma\ \subset\ \RR(H_{x+D}).
$$
To simplify the discussion below, we use the convention that $x$ also begins a $D$-wall when $H_{x+D}=\infty$. 
The usefulness of $D$-walls will appear more clearly later. The basic idea is that they enable us to control the intrinsic distance within 
the range in order to  match pairs of (non-reinforced) oriented edges with opposite direction that are close together. 
Then Propositions \ref{prop:balance} and \ref{outbound} show that the sum of their contributions to the drift part in the martingale that 
will be defined later in \eqref{theMartingale} is negligible, ensuring recurrence. First, we must check that these walls occur often enough, at least when $D$ is suitably chosen as a function of $\delta$. This is precisely the purpose of the next proposition, which is the main result of this section (let us stress for ease of the reading that $D$ will later be taken to be of order $\delta^{1/4}$). 

\begin{proposition}\label{prop:wall}
There exist a constant $C>0$, such that for any $D\ge C|\Gamma|^{10}$, and $D^{3/2}\le \delta \le D^5$, one has for any $x\ge 0$, almost surely on the event $\{H_x<\infty\}$, 
$$\bP[x\text{ begins a $D$-wall}\mid \FF_{H_x}] \ \ge\ \frac 12.$$
\end{proposition}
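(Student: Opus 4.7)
The plan is to upper bound the probability of the complementary event
$$F = \{x \text{ does not begin a } D\text{-wall}\}$$
conditionally on $\FF_{H_x}$. By the paper's convention, $F \subseteq \{H_{x+D} < \infty\}$, and on $F$ every level $y\in\{x,\ldots,x+D-1\}$ carries at most $|\Gamma|-1$ visited vertices at time $T_D := H_{x+D}$; in particular the total number of visited vertices inside the strip $\{x,\ldots,x+D-1\}\times\Gamma$ at time $T_D$ is at most $D(|\Gamma|-1)$. The heuristic is that on the complementary event the walker is forced to cross an unreasonable number of new edges in order to reach level $x+D$, and along the way it introduces so many new vertices into the strip that a pigeonhole argument completes some level.

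To quantify this, set $T_k := H_{x+k}$, $A_k := \RR(T_k)$, and $d_k := \max \bar A_k - \min \bar A_k \ge k$. For every $k$ in a suitable range $\{k_0,\ldots,D-1\}$ with $k_0 \asymp |\Gamma|^8$, the hypotheses of Corollary~\ref{lem:newlevel} can be verified after vertical shift: the constraints $d_k \ge C|\Gamma|^8$, $d_k^{3/2}\le \delta \le (2d_k)^5$ hold thanks to $D^{3/2}\le\delta\le D^5$ and $D\ge C|\Gamma|^{10}$ (together with an auxiliary control on $d_k \le \delta^{2/3}$), and, because $\Gamma$ is connected and no level in $\{x,\ldots,x+k\}$ is complete on $F$, each such level of $A_k$ carries a boundary vertex. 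Applying Corollary~\ref{lem:newlevel} gives, for the first new edge crossing $\sigma_k$ after $T_k$,
$$\bP\bigl[\bar X_{\sigma_k}=x+k+1 \,\big|\, \sigma_k<H_{m_k},\, \FF_{T_k}\bigr] \;\le\; \varepsilon_k := C''\,|\Gamma|^5/d_k^{3/4} \;\le\; C''/|\Gamma|^{5/2},$$
where $m_k=\min \bar A_k$. View the new-edge crossings as the steps of an embedded Markov chain and use Lemma~\ref{lem:exitD} to control the excursions of the walker back to the bottom of its current range: these two ingredients, combined with the strong Markov property, yield that on $F$ the total number $N$ of new edges crossed in $[T_{k_0},T_D]$ is at least $\sim D/\varepsilon \gtrsim D^{7/4}/|\Gamma|^5$ with conditional probability at least $3/4$.

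Each new edge adds at most one new vertex, and by iterating Corollary~\ref{lem:newlevel} most of these new vertices lie \emph{below} the current maximum level. A further use of the corollary shows that, again with high probability, the walker does not wander so far below $x$ that most of these new vertices accumulate at levels $<x$; hence a majority of them lie in the strip $\{x,\ldots,x+D-1\}$. Choosing $C$ large enough so that $D^{7/4}/|\Gamma|^5 \gg D|\Gamma|$, the pigeonhole principle then forces the existence of a level $y\in\{x,\ldots,x+D-1\}$ with $|\Gamma|$ visited vertices at time $T_D$, contradicting $F$. Combining the probability estimates yields $\bP[F\mid \FF_{H_x}]\le 1/2$ as wanted.

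\textbf{The main obstacle} is to convert the \emph{conditional} bound from Corollary~\ref{lem:newlevel} (conditioned on $\sigma<H_0$, i.e.\ exiting the range before returning to its bottom) into a usable per-crossing ``geometric'' bound. Each time the walker returns to the bottom of its current range, the argument must be restarted, and one has to rule out that these excursions inflate the probability of quickly reaching new max levels; this is handled by pairing Corollary~\ref{lem:newlevel} with Lemma~\ref{lem:exitD} at every restart. A secondary difficulty is book-keeping the diameter $d_k$ so that it stays inside the admissible window $[C|\Gamma|^8,\,\delta^{2/3}]$ throughout the induction, and ruling out the pathological configuration in which some level below $x$ gets ``sealed'' (no boundary vertex), which would break the hypotheses of the corollary; the latter is dealt with either by restricting to the walker's activity on $\{x,x+1,\ldots\}\times\Gamma$ via a Rayleigh-monotonicity comparison, or by showing directly that complete sealing is extremely unlikely for the parameters at hand.
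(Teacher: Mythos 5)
The overall strategy is the right one and broadly matches the paper's: on the complementary event where no level in $\{x,\ldots,x+D-1\}$ gets completed, use Corollary~\ref{lem:newlevel} to show that new-edge crossings keep happening with non-negligible probability before the walk advances to a new maximal level, so that the total number of such crossings stochastically dominates a sum of geometrics and eventually exceeds what is possible, yielding a contradiction. However, there are two genuine gaps in the execution.

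First, your pigeonhole is applied to vertices, but your accounting is on edges. The inequality ``each new edge adds at most one new vertex'' runs the wrong way: a new edge may add zero new vertices (both endpoints already visited), so a lower bound on the number of new edges crossed does not give a lower bound on the number of new vertices, and hence does not directly force some level to collect $|\Gamma|$ visited vertices. The contradiction should instead be stated entirely in terms of edges: one must show that the walk crosses more \emph{new edges with tails inside the strip} than there are edges incident to the strip (each edge can be ``new'' only once). This is exactly what the paper does. It does not count ``all new edges in $[T_{k_0},T_D]$'' at all; rather, it builds nested stopping times $\tau_{k,i},\sigma_{k,i}$ anchored to the fixed level $x+k+\lfloor D/2\rfloor$ and counts only those new-edge crossings whose tail lies in $\{x+1,\ldots,x+k+\lfloor D/2\rfloor\}$, producing quantities $N(k)$ that are \emph{by construction} crossings inside the strip. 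This eliminates at a stroke all of your ``secondary difficulties'': you no longer need to worry that new edges (or vertices) accumulate to the left of $x$, nor that the diameter $d_k$ of the range drifts out of the admissible window, nor that levels far below $x$ get sealed. Your proposed fixes for those issues (a Rayleigh-monotonicity comparison to $\{x,x+1,\ldots\}\times\Gamma$, or a direct estimate that sealing is unlikely) are not carried out and are not obviously easier than the problem you are trying to solve; in particular, Corollary~\ref{lem:newlevel} concerns the probability of discovering a new \emph{maximal} level, and it is not clear how ``a further use of the corollary'' controls leftward excursions.

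Second, you correctly identify the main obstacle — turning the conditional estimate of Corollary~\ref{lem:newlevel} (conditioned on $\{\sigma<H_0\}$) into a usable per-crossing geometric bound despite repeated returns to the bottom of the range — but you only assert that ``pairing Corollary~\ref{lem:newlevel} with Lemma~\ref{lem:exitD} at every restart'' handles it, without producing the argument. The paper resolves this with a separate intermediate result, Lemma~\ref{corsigma12}, which decomposes the trajectory along its successive crossings of a fixed slab $\{0,\ldots,d\}$ (after shifting), sums the conditional bounds from Corollary~\ref{lem:newlevel} over the excursions between returns to level $d$ and to level $0$, and deduces the unconditional bound $\bP[\sigma_2<\sigma_1\mid\FF_{n_0}]\le C'|\Gamma|^5/d^{3/4}$. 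This is the lemma that makes the geometric domination of the $N(k)$'s rigorous, and it is the piece your outline is missing. Without it, the ``embedded Markov chain of new-edge crossings'' step of your argument is not justified.
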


We first need an intermediate lemma, which follows from the results proved in Subsection \ref{sec.aux}.  
\begin{lemma}\label{corsigma12}
Consider the ORRW $(X_n)_{n\ge 0}$, starting from some vertex at level smaller than $0$. 
Assume that at some time $n_0<H_{d+1}$, one has $\coz X_{n_0} = d$, and that none of the level sets $i$, with $0\le i\le d$, is contained in $\RR_{n_0}$. Assume further that the hypotheses on $d$, $\delta$ and $|\Gamma|$ from Lemma \ref{lem:exitD} are satisfied. Let 
$$\sigma_1:= \inf\{n\ge n_0 \, : \, (X_{n-1},X_n)\in \partial_e\RR_{n-1},\, 1\le \coz X_{n-1}  \le d, \text{ and }1\le \coz X_n  \le d\},$$
and 
$$\sigma_2:= \inf\{n\ge n_0 \, : \, \coz X_n = d+1\}=H_{d+1}.$$
Then with the same constant $C'>0$ as in Corollary \ref{lem:newlevel}, one has 
$$\bP[\sigma_2<\sigma_1\mid \mathcal F_{n_0}] \ \le\  \frac{C' |\Gamma|^5}{d^{3/4}}.$$
\end{lemma}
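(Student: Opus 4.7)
The strategy is to reduce to Corollary~\ref{lem:newlevel} via a coupling of the ORRW with a reversible random walk on an auxiliary subgraph of the slab $\{0,\dots,d\}\times\Gamma$. Set $A$ to be the connected component of $X_{n_0}$ in $\RR_{n_0}\cap(\{0,\dots,d\}\times\Gamma)$. I first check the hypotheses of Lemma~\ref{lem:exitD}: $A$ is connected by construction; since the walk came from a level strictly below $0$ and reached $X_{n_0}$ at level $d$ without touching level $d+1$, its sub-trajectory from the last visit to level $0$ up to time $n_0$ stays in the slab and is a path in $\RR_{n_0}\cap(\{0,\dots,d\}\times\Gamma)$ from a level-$0$ vertex to $X_{n_0}$; hence $A$ contains a vertex at every level $0\le i\le d$. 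The assumption that no level set $\{i\}\times\Gamma$ is contained in $\RR_{n_0}$, combined with the connectedness of $\Gamma$, implies that for each such $i$ some vertex of $A$ at level $i$ has a neighbor in $G$ outside $A$, so $\partial_v A$ meets every level.

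Now let $S$ be the random walk on $G$ with conductances $c(e)=1+\delta\mathbf 1_{e\in A}$ started at $X_{n_0}$, and $\sigma$ its first exit time from $A$. I claim that $(S_n)_{n\ge 0}$ and $(X_{n_0+n})_{n\ge 0}$ may be coupled to coincide up to the first hitting time $H_0$ of level $0$ by either process. Indeed, as long as the walker is at a vertex of level $\ge 1$ and $\sigma_1$ has not yet occurred, no new middle edge has been added and no edge below the slab is accessible from the current vertex; for every edge $e$ adjacent to such a vertex, the condition $e\in\RR_n$ reduces to $e\in\RR_{n_0}$, which is equivalent to $e\in A$, so both processes see identical conductances.

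Under this coupling, the event $\{\sigma_2<\sigma_1\wedge H_0\}$ for the ORRW is contained in $\{\coz{S_\sigma}=d+1,\,\sigma<H_0\}$ for $S$, and Corollary~\ref{lem:newlevel} yields
\[
\bP\bigl[\sigma_2<\sigma_1\wedge H_0 \bigm| \mathcal F_{n_0}\bigr]\ \le\ \bP\bigl[\coz{S_\sigma}=d+1 \bigm| \sigma<H_0\bigr]\ \le\ \frac{C'|\Gamma|^5}{d^{3/4}}.
\]

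The remaining contribution $\bP[\sigma_2<\sigma_1,\,H_0\le\sigma_2]$, in which the walker visits level $0$ before reaching level $d+1$, is the main obstacle. I treat it by applying the strong Markov property at each successive return of the walker to level $d$: before $\sigma_1$, the middle-level portion of the range is frozen, so the hypotheses of Lemma~\ref{lem:exitD} and of the present lemma persist at each such return, and the coupling argument above can be reapplied to bound the probability of $\sigma_2$ occurring during the corresponding up-excursion. Combined with the lower bound of Lemma~\ref{lem:exitD} on the probability of triggering $\sigma_1$ per up-excursion, a geometric summation over returns controls this contribution and completes the proof.
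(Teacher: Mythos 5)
Your reduction to the first up-excursion is sound and mirrors the paper's argument: identifying the frozen sub-graph $A$, verifying the hypotheses of Lemma~\ref{lem:exitD}, coupling the ORRW with the random walk on the network $1+\delta\mathbf{1}_{\{e\in A\}}$ up to $H_0$, and invoking Corollary~\ref{lem:newlevel} to bound $\bP[\sigma_2<\sigma_1\wedge H_0\mid\FF_{n_0}]$ are all correct steps. The gap is in your treatment of the remaining term. You propose to combine the per-excursion bound from Corollary~\ref{lem:newlevel} with the \emph{lower} bound of Lemma~\ref{lem:exitD} on the exit probability per up-excursion, via a ``geometric summation over returns''. That plan does not yield the required estimate: if $p \ge c\,d^{3/2}/(|\Gamma|^6\delta)$ denotes the exit probability per excursion and $q := C'|\Gamma|^5/d^{3/4}$ the per-excursion contribution, the geometric sum $\sum_i (1-p)^i q = q/p$ is of order $|\Gamma|^{11}\delta/d^{9/4}$, which is much larger than the target $q$ and can even be $\gg 1$ in the allowed range $\delta\le(2d)^5$.

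The lower bound of Lemma~\ref{lem:exitD} is not needed at all. The point, and this is what the paper does, is that Corollary~\ref{lem:newlevel} gives a \emph{conditional} bound on each up-excursion: writing $H_{d,i}$ and $H_{0,i}$ for the successive return times to level $d$ and level $0$, one has, on the appropriate $\FF_{H_{d,i}}$-measurable event, $\bP[\,\coz{X}_{\sigma_1\wedge\sigma_2}=d+1,\ H_{d,i}<\sigma_1\wedge\sigma_2<H_{0,i}\mid\FF_{H_{d,i}}\,] \le q\, \bP[\,H_{d,i}<\sigma_1\wedge\sigma_2<H_{0,i}\mid\FF_{H_{d,i}}\,]$. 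Since the events $\{H_{d,i}<\sigma_1\wedge\sigma_2<H_{0,i}\}$, $i\ge 0$, are pairwise disjoint, their probabilities sum to at most one, and the total is at most $q$. You should replace the geometric-series step by this disjointness argument; as it stands, the final step of your proof would not produce the bound $C'|\Gamma|^5/d^{3/4}$.
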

\begin{proof}
Define $(H_{0,i})_{i\ge 0}$, and $(H_{d,i})_{i\ge 0}$ recursively by $H_{d,0}=0$, and for $i\ge 0$,
$$H_{0,i}:=\inf\{n\ge H_{d,i} \ : \coz X_n =0\},$$
and 
$$H_{d,i+1}:=\inf\{n\ge H_{0,i} \ : \coz X_n =d\}.$$ 
Then the Markov property and Corollary \ref{lem:newlevel} give 
\begin{align*}
\bP[\sigma_2<\sigma_1\mid \mathcal F_{n_0}] &= \ \sum_{i\ge 0} \bP[H_{d,i}< \sigma_2<\sigma_1\wedge H_{d,i+1}\mid \mathcal F_{n_0}] \\
&= \ \sum_{i\ge 0} \bP[H_{d,i}< \sigma_2\wedge \sigma_1< H_{0,i}, \bar{S}_{\sigma_2\wedge\sigma_1}=d+1\mid \mathcal F_{n_0}] \\
& \le\ \frac{C'|\Gamma|^5}{d^{3/4}}\sum_{i\ge 0} \bP[H_{d,i}< \sigma_2\wedge \sigma_1< H_{0,i}\mid \mathcal F_{n_0}]  \leq \ \frac{C'|\Gamma|^5}{d^{3/4}}.
\end{align*}
\end{proof}

We are now in position to give the proof of Proposition \ref{prop:wall}.

\begin{proof}[Proof of Proposition \ref{prop:wall}]
Denote by $\tau$ the first time when a level set in $\{x,\dots,x+D-1\}\times \Gamma$ is covered, and set for $k\ge 0$, 
$\tau_k:=H_{x+ k+\lfloor D/2\rfloor}$.  
Then for each $k\ge0$, define $(\tau_{k,i})_{i\ge 0}$ and $(\sigma_{k,i})_{i\ge 0}$ by $\tau_{k,0}:=\tau_k$, and for any $i\ge0$,
\begin{align*}
\sigma_{k,i}&:=\inf\{t > \tau_{k,i}\ :\ (X_{t-1},X_t)\in \partial_e\RR_{t-1}\text{ and } x+1\le  \coz{X}_{t-1}\le x+k+\lfloor D/2\rfloor\},\\
\tau_{k,i+1}&:= \inf\{\sigma_{k,i}\le t<  \tau_{k+1}\ :\ \coz{X}_t= x+k+\lfloor D/2\rfloor \}.
\end{align*}
Note that $\{\tau_{k,i+1}=\infty\} \cap \{\tau_{k+1}<\infty\}= \{\sigma_{k,i} = \tau_{k+1}<\infty\}$. Finally, define 
\[
N(k)=\inf\{i\ge1:\ \tau_{k,i}=\infty\}=\sum_{i=0}^\infty \1{\tau_{k,i}<\infty}.
\]
For each $k\ge0$, the random variable $N(k)$ counts how many times $X$ starts from $k$, and crosses a new edge with tail at a level between $x+1$ and $x+k+\lfloor D/2\rfloor$, before $\tau_{k+1}$.

Now, letting $d=\lfloor D/2\rfloor$, Lemma \ref{corsigma12} implies that for any $k,i\ge 0$, almost surely, 
\begin{align*}
\bP[\tau_{k,i+1}=\infty\mid \FF_{\tau_{k,i}}] {\bf 1}\{\tau_{k,i}<\tau\}\ \le\  \frac{C'|\Gamma|^5}{d^{3/4}}\ \le\ \frac{1}{10|\Gamma|^2}, 
\end{align*}
using the hypothesis that $D\ge C|\Gamma|^{10}$, and choosing $C$ large enough, for the last inequality. 
Hence, if no level set is covered by time $\tau_{k+1}$, $N(k)$
stochastically dominates a geometric random variable with parameter $1/(10|\Gamma|^2)$. 
On the other hand, by definition, the number of edges in $\{x,\dots,x+D-1\}\times \Gamma$, which are crossed before time $H_{x+D}$ 
is larger $\Sigma:=N(1)+\dots+ N(\lfloor D/2\rfloor)$. The above discussion shows that on the event $\{\tau>H_{x+D}\}$, this sum stochastically dominates the sum of $\lfloor D/2\rfloor$, i.i.d. Geometric random variables with parameter $1/(10|\Gamma|^2)$.  
Thus it follows from Bernstein's inequality that with probability at least $1/2$, $\Sigma$ is larger than $5|\Gamma|^2\lfloor D/2\rfloor$. However, the latter quantity is always larger than the total number of edges on $\{x,\dots,x+D\}\times \Gamma$, leading to a contradiction. We conclude that with probability at least $1/2$, one has $\tau<H_D$, as wanted. 
\end{proof}
 
\begin{remark}\emph{
Proposition \ref{prop:wall} shows the existence of $D$-walls at typical distance of order a constant, when $D$ is a power of $\delta$, and 
$\delta$ is  polynomially large with respect to $|\Gamma|$. However, if one does not care about polynomial bounds, a much simpler argument gives a weaker exponential bound. Indeed it is easy to check that with a probability of order $\exp(-|\Gamma|^3)$, a level set is entirely covered before the next level is discovered, in other words that a $1$-wall begins.}
\end{remark}

\section{Gambler's ruin type estimates}\label{sec:gambler}
\subsection{A martingale}
We consider now $\{X_n\}_{n\ge 0}$ the ORRW on $\Z\times \Gamma$ starting from some vertex at level $0$. 
We then define the process 
\begin{equation}\label{theMartingale}
M_n\ :=\ \coz X_n + \delta\cdot  \sum_{k=0}^{n-1} (\coz{X}_{k+1}-\coz X_k) \, \1{\{X_k,X_{k+1}\} \notin E_k},
\end{equation}
where $E_k$, defined in \eqref{defEn}, is the set of edges crossed by time $k$.
The following fact was observed by Vervoort \cite{Ver}\footnote{similar martingales related to other reinforcement schemes were also previously constructed by Davis \cite{Dav90}.}.
\begin{lemma}\label{lem:martingale}
The process $(M_n)_{n\ge 0}$ is an $(\FF_n)_{n\ge 0}$-martingale. 
\end{lemma}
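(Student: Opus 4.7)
The plan is to verify directly that $\mathbb{E}[M_{n+1}-M_n\mid \mathcal F_n]=0$ by computing each term from the transition law \eqref{def.ORRW} and then using the product structure of the ambient graph $G=\Z\times\Gamma$.

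First I will write
$$M_{n+1}-M_n \;=\; (\coz X_{n+1}-\coz X_n)\bigl(1+\delta\,\mathbf 1\{\{X_n,X_{n+1}\}\notin E_n\}\bigr),$$
and condition on $\FF_n$. Setting $x:=X_n$ and $Z:=\sum_{z\sim x}(1+\delta\,\mathbf 1\{\{x,z\}\in E_n\})$, the ORRW transition rule gives
$$
\mathbb{E}[M_{n+1}-M_n\mid\FF_n]\;=\;\frac{1}{Z}\sum_{y\sim x}(\coz y-\coz x)\Bigl[(1+\delta\,\mathbf 1\{\{x,y\}\in E_n\})+\delta\,\mathbf 1\{\{x,y\}\notin E_n\}\Bigr].
$$

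The key algebraic observation is that the bracket in the sum equals $1+\delta$ regardless of whether the edge $\{x,y\}$ has been crossed before or not: if $\{x,y\}\in E_n$, the non-reinforced indicator vanishes and we get $1+\delta$; if $\{x,y\}\notin E_n$, the reinforced indicator vanishes and we get $1+\delta$. Hence the "correction'' term $\delta(\coz X_{n+1}-\coz X_n)\mathbf 1\{\{X_n,X_{n+1}\}\notin E_n\}$ has been tuned precisely to compensate the bias of the reinforcement, yielding
$$
\mathbb{E}[M_{n+1}-M_n\mid\FF_n]\;=\;\frac{1+\delta}{Z}\sum_{y\sim x}(\coz y-\coz x).
$$

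It remains to observe that the right-hand side vanishes on $G=\Z\times\Gamma$. Indeed, the neighbors of $x=(\coz x,\cog x)$ are the two horizontal neighbors $(\coz x\pm 1,\cog x)$, contributing $(+1)+(-1)=0$, together with the vertical neighbors $(\coz x,w)$ for $w\sim_\Gamma \cog x$, each contributing $\coz y-\coz x=0$. This forces the sum over $y\sim x$ to be zero and establishes the martingale property. The proof presents no real obstacle: it is simply a direct computation whose only structural input is that the projection onto $\Z$ is balanced at every vertex of the cylinder graph.
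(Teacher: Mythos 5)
Your proof is correct and follows essentially the same computation as the paper: both arguments hinge on the observation that the product of the transition probability weight $1+\delta\mathbf 1\{\{x,y\}\in E_n\}$ and the martingale increment weight $1+\delta\mathbf 1\{\{x,y\}\notin E_n\}$ equals $1+\delta$ on every edge, after which the horizontal contributions cancel by symmetry. The only cosmetic difference is that you factor out the constant $1+\delta$ across all neighbors and invoke $\sum_{y\sim x}(\coz y-\coz x)=0$, whereas the paper immediately restricts to the two horizontal neighbors and exhibits the cancellation of the two resulting terms.
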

\begin{proof}
Assume that, at time $n\in\mathbb{N}$, we have $X_n=s=(x,\gamma)\in\mathbb{Z}\times\Gamma$. 
If $S_{n+1}\in \{x\}\times \Gamma$, then $M_{n+1}=M_n$. Thus, to compute the conditional expectation of $M_{n+1}-M_n$, we only need to consider the cases when $X_{n+1}=(x-1,\gamma)$ or $X_{n+1}=(x+1,\gamma)$. Denoting $e_1=\{s,(x+1,\gamma)\}$ and $e_2=\{s,(x-1,\gamma)\}$, we obtain
\begin{align*}
\E\left[\left.M_{n+1}-M_n\right|\mathcal{F}_n\right]&=\frac{(1+\delta\1{e_1\in E_n})(1+\delta\1{e_1\notin E_n})+ (1+\delta\1{e_2\in E_n})(-1-\delta\1{e_2\notin E_n})}{\displaystyle{\sum_{z:z\sim s}\left(\delta\1{\{s,z\}\in E_n}+1\right)}}\\
&=0.
\end{align*}
Therefore, $(M_n)_n$ is a martingale.
\end{proof}

\subsection{Gambler's ruin estimates}
Here we prove gambler's ruin type estimates for the (horizontal coordinate of the) ORRW on $\Z\times \Gamma$. 
These will be our main tools for the proofs of Theorems  
\ref{theo1} and \ref{theo2}. We first present a relatively simple form of the result, Proposition \ref{prop:main}, which will be sufficient for the proof of Theorem \ref{theo1} and Proposition \ref{returntime}. Then we will give a slightly more precise and more effective version, Proposition \ref{prop:main2}, which is needed for the proof of Theorem \ref{theo2}.  But first let us introduce some new notation. For $x\ge 0$, let 
$$H_{x,0}:=\inf\{n\ge H_x\ :\ \coz X_n=0\},$$
be the first return time to level $0$ after time $H_x$.  Here is our first result: 

\begin{proposition}\label{prop:main}
There exists a constant $C>0$, such that for any $\delta\ge C|\Gamma|^{40}$, almost surely for any $x$ large enough, on the event $\{H_x<\infty\}$,  
$$\bP[H_{2x} < H_{x,0}\mid \FF_{H_x}] \ \le\ \frac 1{2^{10}}.$$
\end{proposition}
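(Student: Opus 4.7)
The plan is to apply optional stopping to the martingale $M$ of Lemma \ref{lem:martingale} at the stopping time $\tau := H_{2x} \wedge H_{x,0}$. First I would check that $\tau < \infty$ almost surely by a standard ellipticity argument: at every step the walk has probability at least $1/((|\Gamma|+1)(1+\delta))$ to take any given legal step, so $\coz{X}$ cannot remain trapped in the bounded interval $(0,2x)$ forever. Between $H_x$ and $\tau$ the projection $\coz{X}$ stays in $[0,2x]$ and the total number of first-crossings of horizontal edges during this time is at most $2x|\Gamma|$, so $M$ is bounded there. Optional stopping therefore yields
\[
\E\!\left[\coz{X}_\tau \mid \FF_{H_x}\right] + \delta\,\E\!\left[S_\tau - S_{H_x} \mid \FF_{H_x}\right] \;=\; x,
\]
where $S_n := \sum_{k<n}(\coz X_{k+1}-\coz X_k)\,\mathbf{1}\{\{X_k,X_{k+1}\}\notin E_k\}$ is the signed new-horizontal-edges process. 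Writing $p := \bP[H_{2x} < H_{x,0} \mid \FF_{H_x}]$ and $A := \E[S_\tau - S_{H_x} \mid \FF_{H_x}]$, and using $\coz{X}_\tau \in \{0,2x\}$, this rearranges into
\[
p \;=\; \tfrac{1}{2} - \tfrac{\delta A}{2x}.
\]

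The next key step is the decomposition $S_\tau - S_{H_x} \;=\; (R_\tau^+ - x) \,+\, \Theta$, where $R_\tau^+ := \max_{H_x \le k \le \tau} \coz{X}_k$ and $\Theta$ collects the contributions of all first-crossings other than the \emph{pioneering} edge at each newly explored level pair. Two observations justify this rewriting. First, since $\coz{X}_k \ge 0$ on $[H_x,\tau]$ while $L_{H_x} := \min_{k \le H_x}\coz{X}_k \le 0$, the range never extends to the left during this excursion. Second, the pioneering edge between levels $y$ and $y+1$ (for $y \in [x, R_\tau^+ - 1]$) must be crossed rightward, because the walker enters a brand-new level only from below; this contributes a deterministic $+1$ for each such $y$. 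Consequently $R_\tau^+ - x \ge 0$ always and $R_\tau^+ - x \ge x$ on $\{H_{2x} < H_{x,0}\}$, so $\E[R_\tau^+ - x \mid \FF_{H_x}] \ge p\,x$.

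The heart of the proof is then to establish $\E[\Theta \mid \FF_{H_x}] \ge -\varepsilon\,x$ for some $\varepsilon = O(|\Gamma|^2/\delta^{3/4})$. Here the $D$-wall machinery of Section \ref{section:Dwall} enters. Set $D := \lceil \delta^{1/4} \rceil$: the constraints $D \ge C'|\Gamma|^{10}$ and $D^{3/2} \le \delta \le D^5$ of Proposition \ref{prop:wall} hold under the hypothesis $\delta \ge C|\Gamma|^{40}$ provided $C$ is large enough. That proposition then guarantees the existence of a $D$-wall starting from any given level with conditional probability at least $1/2$, and the presence of walls bounds the intrinsic distance within the range between any two exit edges at nearby levels by $O(D|\Gamma|)$. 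Proposition \ref{prop:balance} converts this into a per-crossing bias of at most $O(D|\Gamma|/\delta) = O(|\Gamma|/\delta^{3/4})$ for each non-pioneering first-crossing, while Proposition \ref{outbound} is used in parallel to ensure that the walk cannot race across many wall-regions between consecutive exit events (which would invalidate the intrinsic distance bound). Summing the bias over the at most $2x|\Gamma|$ possible non-pioneering first-crossings yields the required control on $\E[\Theta]$.

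Combining, $A \ge p\,x - \varepsilon\,x$, which plugged into the martingale identity and rearranged yields
\[
p\,(\delta + 2) \;\le\; 1 + \varepsilon\delta.
\]
Choosing $C$ large enough to ensure both $\varepsilon \le 2^{-11}$ and $\delta \ge 2^{11}$ then gives $p \le 2^{-10}$ by direct computation. The main obstacle I anticipate is the rigorous treatment of $\Theta$: its $\pm 1$ summands are strongly correlated and must be paired (rightward versus leftward first-crossings at each level) via stopping times adapted to the filtration, so that Proposition \ref{prop:balance} may be invoked at each pairing with the correct intrinsic distance bound supplied by the nearest $D$-wall. This bookkeeping is exactly what the combined use of Propositions \ref{prop:balance}, \ref{outbound} and \ref{prop:wall} is designed to handle.
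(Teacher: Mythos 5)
Your proposal matches the paper's argument step by step: optional stopping of the martingale at $\tau_x=H_{x,0}\wedge H_{2x}$ to get $p=\tfrac12-\delta A/(2x)$, isolating the deterministic $+1$ contribution from each pioneering crossing of a new level (giving $\ge px$), and then controlling the residual drift of non-pioneering first-crossings by pairing oppositely-oriented boundary edges via $D$-walls together with Propositions~\ref{prop:balance}, \ref{outbound} and \ref{prop:wall}. The bookkeeping you flag as the remaining obstacle is exactly what the paper carries out with the pairing map $\varphi_n$, the restricted edge set $\GG_n$ (which requires both $\tilde d_{A_n}(e^-,X_{\sigma_n})\le\delta^\beta$, controlled by Proposition~\ref{outbound}, and wall spacing $D_n(e)\le\delta^\alpha$, controlled via Lemma~\ref{exn}), and the term-by-term decomposition in \eqref{drift1}--\eqref{drift6}.
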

\begin{remark}\emph{The exact value of the constant in the upper bound does not have any serious meaning, the only reason for the choice of $2^{-10}$ is for convenience for the  proof of Proposition \ref{returntime}. }
\end{remark}
\begin{proof}
Fix some $x>0$, and assume that $H_x$ is finite. Then consider $(\sigma_n)_{n\ge 0}$, the successive times of visit of new edges after $H_x$. Formally 
 $\sigma_0=H_x$, and for $n\ge 1$, 
$$\sigma_n:=\inf\{k> \sigma_{n-1}\ :\ \{X_{k-1},X_k\} \notin \RR_{k-1} \}.$$
Then set $A_n:=\RR_{\sigma_n}$, which we recall we consider as a subgraph of $G$, where the edges are those traversed by the walk, and let
$$D_n:= \sum_{k=1}^n (\coz X_{\sigma_k}-\coz X_{\sigma_k-1}) {\bf 1}(\sigma_k\le n),$$
be the total drift accumulated after $H_x$ and before time $n$. For simplicity write also $\tau_x:=H_{x,0}\wedge H_{2x}$. 
Applying the optional stopping time theorem, and using Lemma \ref{lem:martingale}, we get 
\begin{equation}\label{eq:stoptheo}
2x\cdot \bP[H_{2x} < H_{x,0}\mid \FF_{H_x}]+ \delta\, \E[D_{\tau_x}\mid \FF_{H_x}] \ =\ x.
\end{equation}
Thus it amounts to estimate the expected value of the total drift accumulated at time $\tau_x$.

To this end, we will bound the expected drift at each visit of a new edge. 
When both the tail and the head of the edge are on the same level, then the drift is zero, so nontrivial contributions only 
come from horizontal edges.
Informally we need to show that the walk has at least as much chance (to the leading order) to exit $A_n$ 
through a directed edge $e=(v,w)$ oriented positively, i.e. such that $\coz{w}=\coz{v}+1$, as through an edge oriented in the other direction. 
This holds actually only up to small error terms which are of sub-linear order in $\delta$. 
Fortunately the latter are compensated by a strong drift equal to $\delta$ accumulated each time the process arrives at a new level for the first time, which on the event $\{H_{2x}<H_{x,0}\}$ happens exactly $x$ times.

To be more precise, from now on, and in the whole section, we fix $D=\lfloor \delta^{1/4}\rfloor$.   
Since we assume $\delta\ge C |\Gamma|^{40}$, we have also $D\ge C^{1/4} |\Gamma|^{10}$, so that by taking large enough $C$ one can apply the result of Proposition \ref{prop:wall}.

Now let $z$ be the smallest positive integer such that $z$ begins a $D$-wall,  
with the notation of Section \ref{section:Dwall}, and let $D_0:=z+D$. 
Since $D_0$ is almost surely finite (as a consequence of Proposition \ref{prop:wall}), 
we can  assume that $x$ is larger than $D_0$, by taking larger $x$ if necessary.

Next, for $n\ge 0$, let 
$$\EE^+_n=\left\{e=(v,w)\in \partial_e A_n\ :\ \coz w = \coz v +1\text{ and }\coz{v}\in \{D_0,\dots,2x-2\}\right\},$$ 
and 
$$\EE_n^-:=\left\{e=(v,w)\in \partial_e A_n\ :\ \coz w = \coz v -1 \text{ and }\coz{v} \in \{D_0,\dots,2x-1\}\right\}.$$ 
Then we define an injective map $\varphi_n: \EE_n^-\to  \EE^+_n$ as follows. 
If $e\in \EE_n^-$, with $e^-=(y,\gamma)$, 
we let $\varphi_n(e)$ be the edge in $\EE_n^+$, such that $ \varphi_n(e)^-=(y',\gamma)$, with the largest possible $y'$ smaller 
than or equal to $y$. Such an edge necessarily exists as soon as there exists a $D$-wall on the left of $e$, which we assumed. 
Moreover, if $z$ is the largest integer beginning a $D$-wall with $z+D\le y$, and $z'$ is the smallest integer beginning a $D$-wall, which is  larger than $y$, then one has $d_{A_n}(e^-,\varphi_n(e)^-)\le |\Gamma|(z'+D-z)$, using again that the diameter in $\Gamma$ is bounded by $|\Gamma|- 1$. 
Note that a $D$-wall on the right of $y$ might not have 
been yet discovered at time $\sigma_n$. In this case one can just define $z'$ as the 
maximal level reached at time $\sigma_n$, and the same bound for $d_{A_n}(e^-,\varphi_n(e)^-)$ holds. 
We also set $D_n(e):=(z'+D-z)$, with the above notation. 
Then we define a new intrinsic distance between vertices: given some subgraph $A\subset G=\Z\times \Gamma$, and for any $v,w\in A$, we let 
$$\tilde d_A(v,w):=\# \{i\, :\, \coz v\wedge \coz w \le i \le \coz v\vee \coz w, \text{ and level $i$ is not contained in $A$}\}.$$
Next we define 
$$\GG_n\ :=\ \EE^-_n\cap \{e\, : \, \tilde d_{A_n}(e^-,X_{\sigma_n})\le \delta^\beta \text{ and }D_n(e) \le \delta^\alpha\},$$
with $\alpha$ and $\beta$ some constants satisfying $1/4<\alpha<1/2<\beta<1$, and $\alpha+\beta\le7/8$. 
We also denote by 
$e_n:=(X_{\sigma_n-1},X_{\sigma_n})$, the $n$-th visited (directed) edge after time $H_x$. 
Then set for $n\ge 0$, 
$$I_n:= {\bf 1}(\sigma_n=H_{\overline X_{\sigma_n}}),$$ 
the indicator function that a new level is discovered at time $\sigma_n$.  
One has, on the event $\{\sigma_n<\tau_x\}$, 
\begin{eqnarray}\label{drift1}
\nonumber && \E\left[(D_{\sigma_{n+1}}-D_{\sigma_n}){\bf 1}(\{\sigma_{n+1}\le\tau_x\})  \mid \FF_{\sigma_n}\right] \\
\nonumber &\ge &  \bP\left[I_{n+1}=1,\, \sigma_{n+1}\le \tau_x \mid \FF_{\sigma_n}\right]  - \bP\left[\coz e_{n+1}^+=\coz e_{n+1}^- -1 \in  \{0,\dots,D_0-1\}\mid \FF_{\sigma_n}\right]\\ 
\nonumber && - \bP[D_n(e_{n+1})>\delta^\alpha \mid \FF_{\sigma_n}] - \bP[\tilde d_{A_n}(e_{n+1}^-,S_{\sigma_n})> \delta^\beta \mid \FF_{\sigma_n}] \\
&&-  \sum_{e\in \GG_n} \left(\bP\left[e_{n+1}=e \mid \FF_{\sigma_n}\right]-\bP\left[e_{n+1}=\varphi_n(e) \mid \FF_{\sigma_n}\right]\right). 
\end{eqnarray}
The last sum above can be bounded using Proposition \ref{prop:balance}, which gives 
\begin{equation*}
\sum_{e\in \GG_n} |\bP\left[e_{n+1}=e \mid \FF_{\sigma_n}\right]-\bP\left[e_{n+1}=\varphi_n(e) \mid \FF_{\sigma_n}\right]| \ \le\   2|\Gamma|^2\cdot \delta^{\alpha+\beta -1}, 
\end{equation*}
since the number of edges in $\GG_n$ is bounded by $2|\Gamma|\delta^{\beta}$, and for any $e\in\GG_n$, one has 
by the above discussion $d_{A_n}(e^-,\varphi_n(e)^-)\le |\Gamma|\delta^\alpha$. Moreover, the total number of edges between levels $1$ and $2x$ is bounded by $2x|\Gamma|^2$. Thus 
\begin{equation}\label{drift2}
\sum_{n\ge 0} \sum_{e\in \GG_n} |\bP\left[e_{n+1}=e \mid \FF_{\sigma_n}\right]-\bP\left[e_{n+1}=\varphi_n(e) \mid \FF_{\sigma_n}\right] |{\bf 1}(\sigma_n<\tau_x)\ \le\  4x|\Gamma|^4\,  \delta^{\alpha+\beta -1}.  
\end{equation}
Likewise the number of edges oriented negatively between levels $0$ and $x_0$ is bounded by $|\Gamma|x_0$. Thus 
\begin{equation}\label{drift3}
\sum_{n\ge 0} \bP\left[\coz e_{n+1}^+=\coz e_{n+1}^- -1 \in  \{0,\dots,D_0-1\}\mid \FF_{\sigma_n}\right] {\bf 1}(\sigma_n<\tau_x) \ \le\  |\Gamma| D_0.
\end{equation}
Using Proposition \ref{outbound}, and the hypothesis $\delta\ge C |\Gamma|^{40}$, we obtain similarly by taking larger $C$ if necessary,  
\begin{equation*}
\sum_{n\ge 0} \bP[\tilde d_{A_n}(e_{n+1}^-,S_{\sigma_n}) >\delta^{\beta} \mid \FF_{\sigma_n}]{\bf 1}(\sigma_n<\tau_x) \ \le\ x\cdot \delta^{-1}. 
\end{equation*}
For the last remaining term in \eqref{drift1}, let us denote by $\LL_x$ and $\LL_{x,2x}$ the sum of the distances between two consecutive $D$-walls whose distance exceeds $\delta^\alpha$, respectively between levels $0$ and $x$ and between levels $x$ and $2x$. First one has 
\begin{equation}\label{drift4}
 \E\Big[\sum_{n\ge 0} \bP[D_n(e_{n+1})>\delta^\alpha \mid \FF_{\sigma_n}]{\bf 1}(\sigma_n<\tau_x)\, \Big| \,\FF_{H_x}\Big] \ \le\ |\Gamma|^2\left(\LL_x+\E[\LL_{x,2x}\mid \FF_{H_x}]\right).
 \end{equation}
Moreover, Proposition \ref{prop:wall} shows that the distance between two consecutive $D$-walls is stochastically dominated by a geometric random variable with parameter $1/2$, times $D$. Using that geometric random variables have exponential tail, we get that almost surely, for all $x$ large enough, 
\begin{equation}\label{drift5}
\LL_x \le x\cdot \delta^{-1}\quad \text{and}\quad \E[\LL_{x,2x}\mid \FF_{H_x}]\le x\cdot \delta^{-1}. 
\end{equation}  
On the other hand 
\begin{equation}\label{drift6}
\sum_{n\ge 0} \bP[I_{n+1}=1,\, \sigma_{n+1}\le \tau_x \mid \FF_{H_x}] \ \ge\ x\cdot \bP[H_{2x}<H_{x,0}\mid \FF_{H_x}],  
\end{equation}
simply because on the event $\{H_{2x}<H_{x,0}\}$ the walk arrives exactly $x$ times at a new level for the first time.

Therefore, it follows from \eqref{drift1}, \eqref{drift2}, \eqref{drift3}, \eqref{drift4}, \eqref{drift5}, and \eqref{drift6} that almost surely for all $x$ large enough, 
\begin{eqnarray*}
\E[D_{\tau_x}\mid \FF_{H_x}] \ \ge\  x\cdot \bP[H_{2x}<H_{x,0}\mid \FF_{H_x}] -  8x\, |\Gamma|^4 \, \delta^{\alpha+\beta-1}-|\Gamma|D_0. 
\end{eqnarray*}
Together with \eqref{eq:stoptheo}, it implies 
$$\bP[H_{2x}<H_{x,0}\mid \FF_{H_x}] \ \le\ \frac{1}{2+\delta} + 8 |\Gamma|^4 \, \delta^{\alpha+\beta-1}+\frac{|\Gamma| D_0}{x}.$$
Then Proposition \ref{prop:main} follows, since $\alpha+\beta\le7/8$
and $\delta\ge C|\Gamma|^{40}$, and thus by choosing $C$ large enough, one can make the sum of the two first terms on the righthand side smaller than $2^{-11}$, and similarly for the last term by choosing $x$ large enough.  
\end{proof}

Now we present a stronger form of Proposition \ref{prop:main}, which requires some more notation. For $x\ge 0$, we let $\LL_x$ be the sum of the distances between two consecutive $D$-walls on the left of $x$, whose distance exceeds $\delta^\alpha$, with $\alpha \in (1/4,1/3) $.  Recall also that we fixed the value of $D$ as $D=\lfloor \delta^{1/4}\rfloor$. 
The next lemma will be needed. 
\begin{lemma}\label{exn}
Consider the event $\mathcal E_x:=\{\LL_x\le \frac x\delta\}$. Then for $\delta$ large enough, one has for all $x\ge 1$,  
$$
\bP(\mathcal E_x) \ge 1 - \exp(- \frac x{2\delta^{5/4}}).
$$
\end{lemma}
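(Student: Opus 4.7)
The plan is to reduce $\LL_x$ to a sum of iid truncated geometric random variables and then apply an exponential Markov bound.

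First, iterating Proposition \ref{prop:wall} and using the strong Markov property at the (random) levels that begin successive $D$-walls, the sequence of distances between consecutive $D$-walls on the left of $x$ is stochastically dominated by $(D G_i)_{i \ge 1}$, where the $G_i$ are iid with $\bP(G_i = j) = 2^{-j}$ for $j \ge 1$. Since each spacing is at least $1$, the number of gaps appearing in $\LL_x$ is at most $x+1$. Writing $Y_i := D G_i \mathbf 1\{D G_i > \delta^\alpha\}$, the $Y_i$ are iid and
$$\LL_x \;\le\; \sum_{i=1}^{x+1} Y_i.$$

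Next, I would apply the exponential Markov inequality. Fix $\lambda > 0$ such that $r := e^{\lambda D}/2 < 1$ and set $k := \min\{j \ge 1 : D j > \delta^\alpha\}$, so that $k \ge \delta^\alpha/D \ge \delta^{\alpha - 1/4}$. A direct computation gives
$$\E\bigl[e^{\lambda Y_1}\bigr] \;=\; \bP(G_1 < k) + \sum_{j \ge k} 2^{-j} e^{\lambda D j} \;\le\; 1 + \frac{r^k}{1-r} \;\le\; \exp\!\left(\frac{r^k}{1-r}\right),$$
and hence
$$\bP(\LL_x > x/\delta) \;\le\; \exp\!\left(-\frac{\lambda x}{\delta} + \frac{(x+1)\, r^k}{1-r}\right).$$

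The final step is to tune $\lambda$. I would take $\lambda = \beta/D$ for some fixed $\beta \in (1/2, \log 2)$ (say $\beta = 3/5$), which gives $r = e^\beta/2 < 1$ and $\lambda x/\delta = \beta x/\delta^{5/4}$ up to negligible floor corrections in $D$. Since $\alpha > 1/4$, the quantity $r^k$ decays faster than any negative power of $\delta$, and consequently for $\delta$ large enough (uniformly in $x \ge 1$) one has $(x+1)\, r^k/(1-r) \le (\beta - 1/2) x/\delta^{5/4}$, yielding
$$\bP(\LL_x > x/\delta) \;\le\; \exp\!\left(-\frac{x}{2\delta^{5/4}}\right),$$
as desired.

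The subtle point is the stochastic domination in the first step: Proposition \ref{prop:wall} only gives a conditional bound at a fixed level, so one must iterate it by defining the successive $D$-wall levels as stopping times and invoking the strong Markov property at each, in order to obtain a genuine iid dominator. The remainder of the argument is routine Chernoff bookkeeping.
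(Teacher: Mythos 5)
Your proof is correct and follows the paper's argument exactly: stochastically dominate $\LL_x$ by a sum of at most $O(x)$ iid copies of $D\cdot g\,\mathbf{1}(g\ge \delta^{\alpha-1/4})$ using Proposition \ref{prop:wall}, then apply the exponential Markov (Chernoff) inequality. The paper simply cites ``Chebyshev's exponential inequality'' without writing out the MGF computation and parameter tuning, which you carry out in full; the two arguments are the same.
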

\begin{proof}
Recall that Proposition \ref{prop:wall} shows that the distance between two consecutive $D$-walls is stochastically dominated by $D\cdot g$, with $g$ a geometric random variable with parameter $1/2$. Moreover, between levels $0$ and $x$, there are certainly less than $x$ integers beginning a $D$-wall. 
Therefore $\LL_x$ is stochastically bounded by the sum of $x$ independent random variables distributed as $D\cdot g{\bf 1}{(g\ge \delta^{\alpha-\frac 14}})$. 
Then the result follows from Chebyshev's exponential inequality. 
\end{proof}
We recall also some notation from the proof of Proposition \ref{prop:main}. 
If $z$ is the first positive integer beginning a $D$-wall, we define $D_0:=z+D$. Note that for any $x\ge 0$, the event $\{x\ge D_0\delta\}$ is $\FF_{H_x}$-measurable. 
Now one can state the refined gambler's ruin estimate necessary to prove Theorem \ref{theo2}. It will be assumed here and in the rest of the paper, that integer parts have to be taken when needed (in particular in the statement of the proposition below $(1+\varepsilon)x$ should be understood as its integer part). 

\begin{proposition}\label{prop:main2}
Let $\alpha\in(1/4,1/2)$ and $\beta \in(1/2,1)$ be fixed. There exists a constant $C>0$, such that for any $\delta\ge C|\Gamma|^{40}$, and any $\varepsilon \in (0,1)$,  
almost surely on the event $\{H_x<\infty\}\cap \{x\ge D_0\delta \}\cap \mathcal E_x$, 
$$\bP[H_{(1+\varepsilon)x} < H_{x,0}\mid \FF_{H_x}] \ \le\ \frac{10|\Gamma|^4\delta^{\alpha + \beta}}{1+\varepsilon \delta}.$$ 
\end{proposition}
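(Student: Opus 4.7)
The plan is to mirror the argument of Proposition \ref{prop:main}, replacing the crude almost-sure bounds on the $D$-wall density and on $D_0$ by the quantitative bounds afforded by the events $\mathcal E_x$ and $\{x\ge D_0\delta\}$. Set $\tau_x := H_{(1+\varepsilon)x}\wedge H_{x,0}$, let $(\sigma_n)_{n\ge 0}$ denote the successive times at which new edges are discovered after $H_x$, let $D_n$ be the horizontal drift accumulated along new edges between $H_x$ and time $n$, and write $p := \bP[H_{(1+\varepsilon)x}<H_{x,0}\mid \FF_{H_x}]$. Applying the optional stopping theorem to the martingale $M$ of Lemma \ref{lem:martingale} at time $\tau_x$ gives
$$
(1+\varepsilon)x\cdot p \,+\, \delta\cdot \E[D_{\tau_x}\mid \FF_{H_x}] \ =\ x.
$$

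The heart of the proof is a lower bound for $\E[D_{\tau_x}\mid \FF_{H_x}]$ obtained by the same term-by-term decomposition as in the proof of Proposition \ref{prop:main}. The key observation is that the positive contribution from new level discoveries is now $\varepsilon x\cdot p$, since on $\{H_{(1+\varepsilon)x}<H_{x,0}\}$ the walk discovers exactly $\varepsilon x$ new levels, namely those strictly between $x$ and $(1+\varepsilon)x$. The error terms are controlled exactly as in the previous proof: Proposition \ref{prop:balance} applied to the matching $\varphi_n$ on $\GG_n$ contributes a term of order $x|\Gamma|^4\delta^{\alpha+\beta-1}$; Proposition \ref{outbound} applied to $\tilde d_{A_n}(e^-_{n+1},X_{\sigma_n})$ contributes a term of order $x/\delta$; the quantity $|\Gamma| D_0$ handles the negatively oriented edges between levels $0$ and $D_0$; and the quantity $|\Gamma|^2\bigl(\LL_x + \E[\LL_{x,(1+\varepsilon)x}\mid \FF_{H_x}]\bigr)$ handles the pairs of $D$-walls whose separation exceeds $\delta^\alpha$. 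The decisive improvement over Proposition \ref{prop:main} is that on $\mathcal E_x \cap \{x \ge D_0\delta\}$ the deterministic bounds $\LL_x\le x/\delta$ and $|\Gamma|D_0\le |\Gamma|x/\delta$ hold, while $\E[\LL_{x,(1+\varepsilon)x}\mid\FF_{H_x}]\le \varepsilon x/\delta$ follows from the same exponential geometric-tail estimate underlying Lemma \ref{exn}, now applied to the range between levels $x$ and $(1+\varepsilon)x$. Packaging all these contributions into a single term one arrives at
$$
\E[D_{\tau_x}\mid \FF_{H_x}] \ \ge\ \varepsilon x\cdot p \,-\, K x|\Gamma|^4\delta^{\alpha+\beta-1}
$$
for some absolute constant $K$, provided $\delta\ge C|\Gamma|^{40}$ with $C$ large enough.

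Substituting this lower bound back into the optional stopping identity and rearranging yields
$$
p\cdot(1+\varepsilon+\varepsilon\delta)\ \le\ 1 + K|\Gamma|^4\delta^{\alpha+\beta}.
$$
Using $1\le |\Gamma|^4\delta^{\alpha+\beta}$ and $1+\varepsilon+\varepsilon\delta\ge 1+\varepsilon\delta$, this rearranges to the claimed bound $p\le 10|\Gamma|^4\delta^{\alpha+\beta}/(1+\varepsilon\delta)$ after enlarging $C$ to absorb constants. The main point, and essentially the only substantive difference with Proposition \ref{prop:main}, is the replacement of the almost-sure asymptotic statements of that proof (valid only for $x$ large enough) by the deterministic bounds afforded by $\mathcal E_x\cap\{x\ge D_0\delta\}$; this is what allows one to retain a small multiplicative constant in the numerator and obtain a bound that decays like $1/(\varepsilon\delta)$ as $\varepsilon\delta\to\infty$. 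The main technical annoyance, rather than any conceptual difficulty, is bookkeeping the numerical constants finely enough to land below the stated value $10$.
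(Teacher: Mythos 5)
Your proposal is correct and matches the intended argument: the paper explicitly omits the proof, stating only that it follows directly from that of Proposition \ref{prop:main}, and your adaptation is exactly what was meant. You correctly identify the three changes: optional stopping at $\tau_x=H_{(1+\varepsilon)x}\wedge H_{x,0}$ replaces $2x$ by $(1+\varepsilon)x$; the new-level count becomes $\varepsilon x$; and the almost-sure ``$x$ large enough'' bounds on $\LL_x$ and $D_0$ are replaced by the deterministic bounds guaranteed on $\mathcal E_x\cap\{x\ge D_0\delta\}$, with the error terms collapsing to $\le 8x|\Gamma|^4\delta^{\alpha+\beta-1}$ so that $1+K\le 9\le 10$ and the stated constant follows.
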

We omit the proof of this proposition, since it follows directly from the proof of Proposition \ref{prop:main}.

\section{Proofs of Theorems \ref{theo1} and \ref{theo2}}\label{sec:prooftheo}
We start by proving Theorem \ref{theo1}. 

\begin{proof}[{\bf Proof of Theorem \ref{theo1}}] Proposition \ref{prop:main} shows that, almost surely, each time the process arrives at a new level of the form $2^k$ (resp.$-2^k$), for $k$ large enough, the probability that it goes back to the origin before arriving at level $2^{k+1}$ (resp. $-2^{k+1}$) is lower bounded by a positive constant, independently of the past. Thus the largest level set visited before returning to the origin is stochastically bounded by the exponential of a geometric random variable, and in particular is almost surely finite. 
Since the ORRW cannot stay confined forever in any finite subgraph, this shows that it returns almost surely infinitely many times to level $0$, concluding the proof. 
\end{proof} 

We now turn to the proof of Theorem \ref{theo2}.

\begin{proof}[{\bf Proof of Theorem \ref{theo2}}] 
Define 
$$H^*_n:= \inf\{k\ge 0\ : \  \{n-1,n\}\times \Gamma\subseteq \RR_k\}.$$
It amounts to prove that almost surely for all $n$ large enough, one has 
\begin{equation}\label{goalthm2}
H^*_n< H_{n+n^{\delta^{-1/8}}},
\end{equation}  
We use Proposition \ref{prop:main2}, with $\varepsilon:=1/\delta^\eta$, and $\eta=\frac 18 + \frac 1{100}$.  
We next fix $\alpha$ and $\beta$ such that $\alpha+\beta \le \frac{3}{4} + \frac 1{100}$. Then Proposition \ref{prop:main2} shows that for $\delta\ge C|\Gamma|^{40}$, with $C$ some large constant, 
one has almost surely on the event $\{H_x<\infty\}\cap \{x\ge D_0\delta\}\cap \mathcal E_x$,
\begin{equation}\label{prop:upgrad}
\bP[H_{(1+\varepsilon)x} < H_{x,0}\mid \FF_{H_x}] \ \le\ \frac{10|\Gamma|^4}{\delta^{\frac 18 -\frac 1{50}}} \ \le \ \frac 12,
\end{equation}
taking larger $C$ if necessary for the last inequality. 
Notice moreover, that the environment on the left of the starting position at time $0$ plays no role in the proof of the proposition: it could be anything this would not change the argument nor any of the constants appearing there. 
Therefore, taking the origin of the space to be the position of the walk at time $H_n$, the time origin to be $H_n$, 
and then applying \eqref{prop:upgrad}, 
one deduces that for each $n$, almost surely, on the event 
$\{H_{n+x}<\infty\}\cap \{x\ge \delta D_0(n)\} \cap \mathcal E_x(n)$, 
\begin{equation}\label{prop:upgrad2}
\bP[H_{n+(1+\varepsilon)x} < H_{n+x,n}\mid \FF_{H_{n+x}}] \ \le\ \frac 12, 
\end{equation}
with $D_0(n)$ and $\mathcal E_x(n)$ defined analogously as $D_0$ and $\mathcal E_x$, but concerning the $D$-walls between levels $n$ and $n+x$, and 
$$H_{n+x,n}:=\inf\{t>H_{n+x}\ :\ \coz X_t=n\}.$$
Now define $z_0:=n^{1/\delta^\eta}$, and for $i\ge 1$, set $z_i:=(1+\varepsilon)^i z_0$. Let also 
$$N:= \frac{9}{\delta^\eta\log (1+\varepsilon)}\log n,$$ 
and  
$$\widetilde{\mathcal E}(n)= \bigcap_{i=0}^{N}\ \mathcal E_{z_i}(n).$$  
In particular $z_N\le n^{10/\delta^\eta}$. 
Then define for $i\ge 0$,
$$\varepsilon_i:= {\bf 1}\{H_{n+z_i,n}<H_{n+z_{i+1}}\},$$
and for $t\ge 0$, 
$$\tau_n(t):=\inf\left\{k\ge 0\ :\ \sum_{i=0}^k \varepsilon_i {\bf 1}\{H_{n+z_i}<\infty\} \ge t \right\}.$$
Next introduce the martingale $(M_k)_{k\ge 0}$ (with respect to the filtration $(\FF_{H_{n+z_k}})_{k\ge 0}$), defined by 
$$M_k:= \sum_{i=0}^k \left(\varepsilon_i - \bP(\varepsilon_i=1\mid \FF_{H_{n+z_i}})\right){\bf 1}\{H_{n+z_i}<\infty\}.$$
Equation \eqref{prop:upgrad2} implies that for any $i=0,\dots,N$, almost surely on the event $\mathcal E_{z_i}(n)\cap\{H_{n+z_i}<\infty\}\cap \{z_0\ge D_0(n)\delta\}$, 
$$\bP(\varepsilon_i=1\mid \FF_{H_{n+z_i}}) \ge \frac 12.$$
Using also that $\log (1+\varepsilon)\le \varepsilon$, we obtain $N\ge 9\log n$ (recalling that $\varepsilon = 1/\delta^\eta$). 
Therefore one can fix a small constant $\kappa>0$, such that, at least for $\delta$ large enough, one has $(N-2\kappa \log n)^2/(8N)\ge \alpha \log n$, for some $\alpha>1$.  
Then Azuma's inequality gives that almost surely on the event $\{z_0\ge D_0(n)\delta\}$,  
\begin{align*}
\bP\left(\tau_n(\kappa \log n)>N,\widetilde{\mathcal E}(n), H_{n+z_N}<\infty\mid \FF_{H_{n+z_0}}\right) & \le\  \bP\left(M_N\le \kappa\log n-\frac N2\right)\\
&\le \ e^{-\frac{(N-2\kappa \log n)^2}{8N}}\\
&\le  \ n^{-\alpha}.  
\end{align*}
Recall now Lemma \ref{exn} and that the distribution of $D_0(n)$ has an exponential tail. Together with the above estimate, and Borel-Cantelli's lemma, they imply that almost surely 
$z_0\ge D_0(n)\delta$, for all $n$ large enough, and  
\begin{equation}
\label{thm2.step1}
\text{almost surely for all $n$ large enough, if $H_{n+n^{10/\delta^\eta}}$ is finite, then $\tau_n(\kappa \log n)\le N$.}
\end{equation} 
In other words, almost surely for $n$ large enough the walk makes at least $\kappa \log n$ returns from level $n+ n^{1/\delta^\eta}$ 
to level $n$, before hitting level $n+n^{10/\delta^\eta}$.

At each of these returns, and for every $k\in \{0,\dots,K\}$, with $K:=[n^{1/\delta^\eta}/2]$, 
it has some positive probability independently of the past, say at least $p$ (with $p$ a positive constant depending only on $\delta$ and $|\Gamma|$)  
to cover entirely $\{n+2k-1,n+2k\}\times \Gamma$,  
before exiting this subgraph. Moreover, this holds independently for any $k$, and for every excursion. 
Now to simplify the discussion below,  
given some time $T>0$, let us say that $k$ is $T$-good when $\{n+2k-1,n+2k\}\times \Gamma$, is covered before time $T$, and otherwise that it is $T$-bad. Write also $z_n(i):=z_{\tau_n(i)}$, for $i\ge 0$. 
Note that, using \eqref{thm2.step1}, all that remains to be done to prove \eqref{goalthm2} is to show that 
\begin{equation}
\label{goalbisthm2}
\text{almost surely, $0$ is $H_{n+z_n(\kappa \log n)}$-good, for all $n$ large enough,}  
\end{equation}
since, at least for $\delta$ large enough, one has $10/\delta^\eta \le 1/\delta^{1/8}$, by definition of $\eta$.  
The discussion above shows that if 
$$T_0:=H_{n+z_n(\frac \kappa 2 \log n)},$$ 
then any $k\in \{0,\dots,K\}$, is $T_0$-good 
with probability at least  
$$p_n:=1-(1-p)^{\frac\kappa 2\log n}\ge 1-n^{-\frac{\kappa p}{2}}.$$
Let $K':=[n^{\kappa p/4}]$, and $M:=8/(\kappa p)$. Since the number of integers $k\le K'$ which are $T_0$-bad is stochastically bounded by a Binomial random variable with parameters $K'$ and $K'^{-2}$, standard estimates show that 
$$\bP(\#\{k\le K'\, :\, k\text{ is $T_0$-bad}\}\ge M)\le n^{-2}.$$
Therefore almost surely, for all $n$ large enough the number of integers $k\le K'$ which are $T_0$-bad is bounded by $M$. 
Now for $k\in \{0,\dots,K'\}$, set 
$$d(k):=\inf \{\ell \ge 0 \, :\, k+\ell \text{ is $T_0$-bad}\}.$$
When the number of $T_0$-bad integers is smaller than $M$, for at least one of them $d(k)\ge K'/M$. 
Denote by $k_1$ the smallest of them. Then let $M'$ be the number of $T_0$-bad integers smaller than or equal to $k_1$, and denote them by $k_{M'}<\dots<k_1$. Note that one can assume that $k_{M'}=0$, as otherwise there is nothing more to prove.

Let 
$$T_1:=H_{n+z_n(\frac \kappa 2(1+\frac 1{M'}) \log n)}.$$
Note that between times $T_0$ and $T_1$ there are at least $(\kappa /2M')\log n$ excursions between levels $n+n^{1/\delta^\eta}$ and $n$. 
Note moreover, that if $d(k_1)\ge K'/M$, then between levels $n+2k_1$ and $n+2k_1+N'/M$, the horizontal coordinate of the ORRW behaves as a simple random walk on $\Z$. Now we recall some basic fact on the local time at the origin for this process. If we denote by $L_m$ the time spent at the origin, when the simple random walk (starting from the origin) first hits $\pm m$, then $L_m$ is distributed as a Geometric random variable with parameter $1/m$ (since whenever the random walk is in $\pm 1$, gambler's ruin estimate shows that it has probability exactly $1/m$ to hit $\pm m$ before the origin). In particular  
$$\bP(L_m\le \sqrt m)\le \frac{c}{\sqrt m},$$
for some constant $c>0$. As a consequence for each of the excursions between times $T_0$ and $T_1$, the number of returns to level $n+2k_1$ 
(after a jump to level $n+2k_1+1$) is larger than $\sqrt{K'/M}$ with probability at least $q:=1-c\sqrt{M/K'}$. Therefore in total the number of returns to level $n+2k_1$ between times $T_0$ and $T_1$  
stochastically dominates $\sqrt{K'/M}\cdot B$, with $B=\mathcal B(\frac{\kappa}{2M'}\log n,q)$ (a Binomial random variable with parameters $(\kappa/2M')\log n$ and $q$). One has 
$$\bP(B=0) = (1-q)^{\frac{\kappa}{2M'}\log n} \le e^{-c'(\log n)^2},$$
for some constant $c'>0$. Thus almost surely for $n$ large enough, $B\ge 1$, and there are at least $\sqrt {K'/M}$ returns to level $n+2k_1$ between times $T_0$ and $T_1$. Using then again that at each of these returns (and independently for each of them) the probability to cover $\{n+2k_1-1,n+2k_1\}\times \Gamma$ is larger than $p$, we deduce that 
\begin{equation*}
\text{almost surely, for $n$ large enough $k_1$ is $T_1$-good.}  
\end{equation*}
Then one can just iterate this argument. More precisely, for $i=2,\dots,M'$, one defines 
$$T_i:= H_{n+z_n(\frac \kappa 2(1+\frac i{M'}) \log n)},$$
and using repeatedly the above argument, one shows inductively that 
\begin{equation*}
\text{almost surely, for $n$ large enough, $k_i$ is $T_i$-good, for all $i=1,\dots,M'$,}  
\end{equation*}
proving well \eqref{goalbisthm2}, since we recall that $k_{M'}=0$. 
This concludes the proof of Theorem \ref{theo2}.  
\end{proof}

\section{Finiteness of the expected return times}\label{sec:returntimes}
We prove here that the successive return times to the origin (after an excursion on the right of it) have finite expectation. 
To be more precise, define $(\tau_i)_{i\ge 1}$ and $(\tau_i^+)_{i\ge 0}$ by $\tau_0=0$ and for $i\ge 0$,
$$\tau_i^+ := \inf\{t>\tau_i \ :\ \overline X_t>0\},\quad \text{and}\quad  \tau_{i+1}:=\inf\{t>\tau_i^+ \ :\ \overline X_t = 0 \}.$$

\begin{proposition}\label{returntime}
There exists a constant $C>0$, such that for any $\delta\ge C|\Gamma|^{40}$, one has 
$\E[\tau_i] <\infty$, for all $i\ge 1$. 
\end{proposition}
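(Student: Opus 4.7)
I split each cycle $\tau_{i+1}-\tau_i$ as $(\tau_i^+-\tau_i)+(\tau_{i+1}-\tau_i^+)$ --- the prefix during which $\coz X\le 0$ and the subsequent right excursion --- and bound each part by combining Proposition \ref{prop:main} (to control how far the walk goes) with the Commute-Time Identity (Proposition \ref{commute}, to control how long it takes).

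For the right excursion, set $M^+:=\max_{\tau_i^+\le t\le \tau_{i+1}}\coz X_t$. Iterating Proposition \ref{prop:main} along the dyadic scales $2^kX_0$, where $X_0$ denotes the a.s.\ finite random threshold past which the proposition holds, yields $\bP[M^+\ge 2^kX_0\mid X_0]\le 2^{-10k}$. Inspecting the proof of Proposition \ref{prop:main} shows $X_0 \asymp |\Gamma|D_0$, and Proposition \ref{prop:wall} implies $D_0$ has exponential tails; hence $\E[(M^+)^a]<\infty$ for any $a<10$. Conditional on $M^+\le m$, the excursion lives in $\{1,\dots,m\}\times\Gamma$. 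I decompose it into epochs separated by the successive times $T_0=\tau_i^+<T_1<\ldots$ at which the walk crosses a new edge. Inside one epoch the walk is a standard random walk on the \emph{fixed} network with conductances $c(e)=1+\delta\mathbf 1_{\{e\in \RR_{T_k}\}}$, and the epoch ends when it either hits $\{0\}\times\Gamma$ or crosses a new edge. Gluing all vertices of $G\setminus \RR_{T_k}$ into a single cemetery $\Delta$, this duration is a hitting time whose expectation Proposition \ref{commute} bounds by the commute time between $X_{T_k}$ and the target. Using the coarse but uniform bounds $\mathcal R\le m+|\Gamma|$ (valid because $c\ge 1$) and $\sum_x\pi(x)\le 2(1+\delta)(m+1)|\Gamma|^2$, this commute time is $O(m^2|\Gamma|^3(1+\delta))$. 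Since the number of epochs is at most the total number of edges of $\{0,\dots,m\}\times\Gamma$, i.e.\ $O(m|\Gamma|^2)$, summing yields $\E[\tau_{i+1}-\tau_i^+\mid M^+\le m]=O(m^3|\Gamma|^5(1+\delta))$, which combined with the tail on $M^+$ gives $\E[\tau_{i+1}-\tau_i^+]<\infty$.

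For the prefix $[\tau_i,\tau_i^+]$ the walk alternates between stays in the finite slice $\{0\}\times\Gamma$ and excursions into the negative levels. Each slice stay exits upward with probability bounded below by a universal positive constant, so the number of left excursions preceding the first upward exit is dominated by a geometric random variable. By the reflection symmetry $z\mapsto -z$, Proposition \ref{prop:main} and the commute-time argument above apply verbatim on the left, so each left excursion has finite expected length. Wald's identity then gives $\E[\tau_i^+-\tau_i]<\infty$, completing the proof.

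The main obstacle is that the commute-time step must apply to a walk whose transition kernel changes every time a new edge is crossed. This is circumvented by cutting the excursion at these discrete epoch boundaries, where the network is fixed between consecutive new-edge crossings, and by using only crude but uniform estimates on $\mathcal R$ and $\sum_x \pi(x)$ that depend on $m$ and $|\Gamma|$ alone, not on the particular shape of the current range $\RR_{T_k}$.
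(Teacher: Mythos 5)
Your overall strategy mirrors the paper's: establish a polynomial tail on the maximal level $M^+$ reached during an excursion, bound the excursion time on the event that $M^+$ is small, and combine. The Wald-type argument for the prefix $[\tau_i,\tau_i^+]$ and the reflection symmetry match the paper's "by symmetry" reduction. However, three steps as written have genuine gaps.

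First, the expression $\E[\tau_{i+1}-\tau_i^+\mid M^+\le m]$ conditions on a \emph{future} event, which changes the law of the walk inside the excursion; the epochs of the conditioned process are not random walks on fixed networks, so the commute-time identity does not apply to them. The correct move, and the one the paper makes, is to replace $\tau_{i+1}$ by the genuine stopping time $\widetilde\tau(m)$ equal to the first hitting time of level $0$ or level $m$ after $\tau_i^+$; then $\tau_{i+1}\le\widetilde\tau(m)$ pathwise on $\{M^+<m\}$, and moment bounds for $\widetilde\tau(m)$ can be established without any conditioning.

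Second, and more seriously, a first-moment bound on the truncated time does not combine with the tail of $M^+$ to give $\E[\tau_{i+1}-\tau_i^+]<\infty$. Knowing $\E[(\tau_{i+1}-\tau_i^+)\mathbf 1\{M^+\le m\}]\le Cm^3$ for every $m$ tells you nothing as $m\to\infty$. One needs to control the contributions from the dyadic shells $\{2^{k}X_0\le M^+< 2^{k+1}X_0\}$, and the only way to do that here is Cauchy--Schwarz in the form $\E[Y\mathbf 1(A\cap B)]\le \E[Y^2\mathbf 1(A)]^{1/2}\,\bP(B)^{1/2}$ with $A=\{M^+<2^{k+1}X_0\}$, $B=\{M^+\ge 2^kX_0\}$ — which requires a \emph{second}-moment bound on the truncated excursion time. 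The paper produces this second moment directly (via a coupling with SRW plus a geometric control on how many steps separate successive new-edge crossings). Your commute-time identity only gives a first moment; you would need to upgrade it, e.g. by iterating Markov's inequality through the Markov property to get a geometric tail and hence all moments of $\widetilde\tau(m)$.

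Third, the recursion $\bP[M^+\ge 2^kX_0\mid X_0]\le 2^{-10k}$ silently assumes that the threshold $X_0$ is measurable with respect to $\FF_{H_{2^{k-1}X_0}}$, but "the a.s.\ finite random threshold past which Proposition~\ref{prop:main} holds" is only $\FF_\infty$-measurable, so the conditional statement of Proposition~\ref{prop:main} cannot be applied at each scale as you do. The paper circumvents this by explicitly isolating the $\FF_{H_x}$-measurable event $\widetilde{\mathcal E}_x$ on which the gambler's ruin estimate is valid at level $x$, and carrying the error $\bP(\widetilde{\mathcal E}_x^c)$ (which is exponentially small by Lemma~\ref{exn}) through the dyadic recursion. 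This correction is what makes the tail bound rigorous.

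These gaps are all repairable — your outline is the right shape, and the commute-time route is a legitimate alternative to the paper's coupling — but as written the argument does not close.
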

\begin{remark}
\emph{Recall that the  one-dimensional simple random walk is null recurrent, meaning that its successive return times to the origin have infinite expectation.  
Here the situation is intermediate between positive and null recurrence, since with   
Theorem \ref{theo2} one can see that $\E[\tau_{i+1}-\tau_i]$ diverges when $i$ goes to infinity.}
\end{remark}
\begin{proof}[Proof of Proposition \ref{returntime}]
First note that by symmetry, it suffices to show that for all $i\ge 0$, $\E[\tau_{i+1}- \tau_i^+]$ is finite. 
Write $M_i:=\sup\{\overline X_t \ : \ t\le \tau_i\}$, for the maximal level reached before time $\tau_i$, when $i\ge 1$, with the convention $M_0=1$. 
We will use Proposition \ref{prop:main}, but one has to take care that the result holds only for $x$ large enough. 
Indeed, the proof reveals that there are two conditions that $x$ should satisfy. First the event $\mathcal E_x$ should hold, with the notation from Lemma \ref{exn}, and secondly 
$x$ should be larger than $2^{11} |\Gamma| x_0$, where $x_0$ is random, but by Proposition \ref{prop:wall}, we know that it is stochastically dominated by $\delta^{1/4}$ times a geometric random variable with parameter $1/2$. Write $\widetilde {\mathcal E}_x$ for the event when these two conditions are satisfied, and note two things: first it is $\mathcal F_{H_x}$-measurable, and 
by Lemma \ref{exn} one has $\bP(\widetilde {\mathcal E}_x)\ge 1- \exp(-cx)$, for some constant $c>0$ (depending only on $|\Gamma|$ and $\delta$). 
Note that defining $\widetilde {\mathcal E}_x^+ := \cap_{y\ge x} \widetilde {\mathcal E}_x$, we also get by a union bound that $\bP(\widetilde {\mathcal E}_x^+)\ge 1- C\exp(-cx)$, for some constant $C>0$.

Now fix some $i\ge 1$. 
The proof of Proposition \ref{prop:main} and the previous discussion, show that for all $n\ge 1$, and for some constant $C>0$,     
\begin{align*}
 \bP\left(M_{i+1}\ge 2^n M_i\right)\ & \le\ \bP\left(M_{i+1}\ge 2^n M_i, \, \widetilde {\mathcal E}_{2^{n-1}M_i}\right) +\bP(\widetilde {\mathcal  E}_{2^{n-1}M_i}^c) \\
  &\le \ \frac{1}{2^{10}}\, \bP\left(M_{i+1}\ge 2^{n-1} M_i\right) +\bP((\widetilde {\mathcal E}_{2^{n-1}}^+)^c) \\
  & \le \ \frac{1}{2^{10}}\, \bP\left(M_{i+1}\ge 2^{n-1} M_i\right) + C\exp(-c2^{n-1}),
\end{align*} 
using that $M_i\ge 1$ at the second line. 
By induction we obtain that for all $n\ge 1$, 
\begin{equation}\label{Mi}
 \bP\left(M_{i+1}\ge 2^n M_i \right) \  \le\  C\cdot 2^{-10 n},
\end{equation}
for some possibly different constant $C>0$ (still depending on $\delta$). 
Then by Cauchy-Schwarz,  
\begin{align}\label{taui}
\nonumber \E[\tau_{i+1} - \tau_i^+ ] \ & \le \ \sum_{n\ge 1}  \E[(\tau_{i+1} - \tau_i^+) {\bf 1}(  2^{n-1}M_i\le M_{i+1}\le 2^nM_i) ]  \\
&\le  \ C\cdot \sum_{n\ge 1} \frac{1}{2^{5n}}\,  \E[(\tau_{i+1} - \tau_i^+)^2 {\bf 1}( M_{i+1}\le 2^nM_i)  ]^{1/2}.
\end{align}
Now by definition on the event 
$\{M_{i+1}\le 2^nM_i\}$, one has $\tau_{i+1}\le \widetilde \tau_{i+1}$, where $\widetilde \tau_{i+1}$ is the first time after $\tau_i^+$ when the walk reaches either level $0$ or level $2^nM_i$. 
Therefore, almost surely 
\begin{equation}\label{tautildei}
\E[(\tau_{i+1} - \tau_i^+)^2 {\bf 1}( M_{i+1}\le 2^nM_i)  ]\ \le\ \E[(\widetilde \tau_{i+1} - \tau_i^+)^2  ].
\end{equation} 
Next, for any time $t$ satisfying $\tau_i^+ < t < \widetilde \tau_{i+1}$, one of the two following cases may hold. 
Either all edges between levels $0$ and $2^nM_i$ have already been discovered, 
in which case it is standard that the hitting time of level $0$ or $2^nM_i$ is bounded by a constant (depending on $\Gamma$) times $2^{4n}M_i^2$. 
If not, at least one edge has not been crossed yet. In this case, a basic coupling with the simple random walk shows that the second moment of $H_{\partial_v\mathcal R_t}\wedge H_0\wedge H_{2^nM_i}$ is also bounded by a constant times $2^{4n}M_i^2$. But the process will cross a new edge after a geometric number of visits to $\partial_v\mathcal R_t$. Thus  
in any case,  
the second moment of the time needed to either cross one new edge, or hit level $0$ or level $2^nM_i$ is bounded by a constant times $2^{4n}M_i^2$. 
Since the total number of edges between these two levels is bounded by $|\Gamma|^2 2^nM_i$, using \eqref{tautildei}, we get the bound 
$$\E[(\tau_{i+1} - \tau_i^+)^2 {\bf 1}( M_{i+1}\le 2^nM_i)  ] \le C \cdot 2^{6n}\, \mathbb E[M_i^3],$$
for some constant $C$ (that depends on $\delta$ and $\Gamma$). 
Injecting this in \eqref{taui}, we get 
$$ \E[\tau_{i+1} - \tau_i^+  ] \ \le\ C\cdot \E[M_i^3]^{1/2}. $$
Finally applying \eqref{Mi} again, we deduce that the third moment of $M_i$ is finite.  
Using then symmetry with respect to level $0$, concludes the proof. 
\end{proof}

\end{document}